\theoremstyle{plain}
\newtheorem*{theorem*}{Theorem}
\theoremstyle{notation}
\numberwithin{equation}{section}
\theoremstyle{plain}
\newtheorem{definition}{Definition}[section]
\newtheorem{theorem}[equation]{Theorem}
\newtheorem{claim}{Theorem}
\newtheorem{remark}[equation]{Remark}
\newtheorem{corollary}[equation]{Corollary}
\newtheorem{lemma}[equation]{Lemma}
\newtheorem{conj}[equation]{Conjecture}
\newtheorem{proposition}[equation]{Proposition} 
\providecommand{\keywords}[1]
{
  \small	
  \textbf{\textit{Keywords---}} #1
}
\begin{document}
\title{One dimensional dynamics and the Rössler Attractor}
%
%
\author{Eran Igra}

%
%
\begin{abstract}
The Rössler system is one of the most famous dynamical systems, mostly due to its numerically-observed attractor - which is generated by a fold mechanism. In this paper we state and prove a topological criterion for the existence of an attractor for the Rössler system - and then analyze the periodic dynamics of the non-wandering set by reducing the flow dynamics to a well-known one dimensional model: the quadratic family, $x^2+c$, $c\in[-2,\frac{1}{4}]$.

\end{abstract}
\maketitle    
\keywords{\textbf{Keywords} - The Rössler Attractor, Chaos Theory, Heteroclinic bifurcations, Topological Dynamics}
\section*{Introduction}         

Recall the Rössler system, first introduced in \cite{Ross76}:
\begin{equation} \label{Vect}
\begin{cases}
\dot{X} = -Y-Z \\
 \dot{Y} = X+AY\\
 \dot{Z}=B+Z(X-C)
\end{cases}
\end{equation}
With parameters $A,B,C\in\mathbf{R}^{3}$. Inspired by a taffy-pulling machine (see \cite{Ro83}), this dynamical system was originally introduced in 1976 by O.E. Rössler to model a suspended stretch-and-fold operation. By varying the $A,B,C$ parameters, Rössler numerically discovered that at $(A,B,C)=(0.2,0.2,5.7)$ the flow generates a chaotic attractor, which had the shape of a band stretched and folded on itself (see the illustration in Fig.\ref{original}). In more detail, in \cite{Ross76} Rössler observed that for $(A,B,C)=(0.2,0.2,5.7)$ the flow had an attracting, invariant set, whose first return map had the shape of a Smale horseshoe (see \cite{S}) - which he used to explain the seemingly complex dynamics of the flow.\\

\begin{figure}[h]
\centering
\begin{overpic}[width=0.4\textwidth]{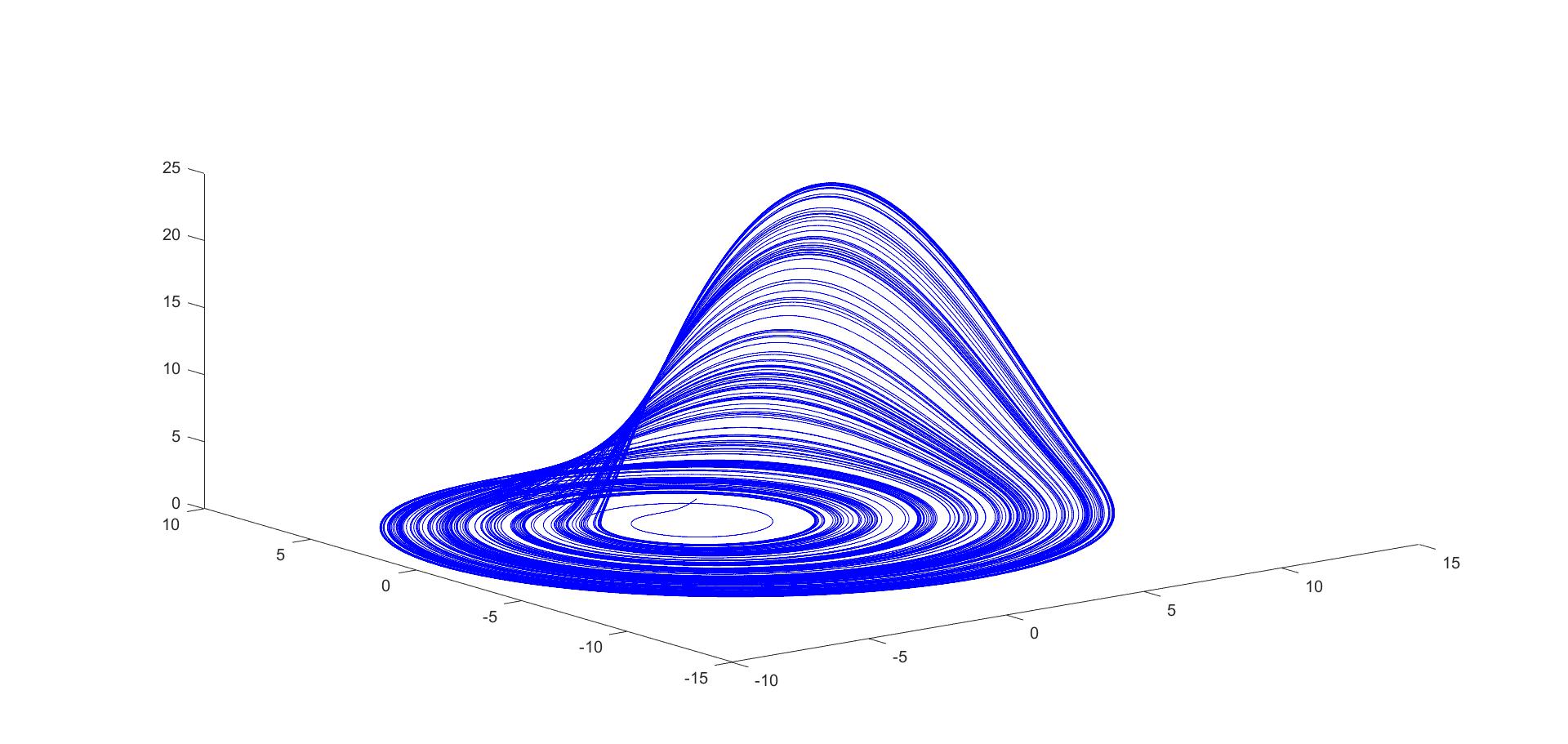}
\end{overpic}
\caption[Fig1]{The Rössler attractor at $(A,B,C)=(0.2,0.2,5.7)$}\label{original}
\end{figure}

Since its introduction in 1976, the Rössler system was the focus of many numerical studies - despite the simplicity of the vector field, the Rössler system gives rise to many non-linear phenomena, which are often linked to homoclinic bifurcations (see, for example, \cite{MBKPS}, \cite{BBS}, \cite{G}, \cite{BB2}, \cite{SR} and the references therein). One particular feature is that varying the parameters $A,B,C$ leads to a rise in the complexity of the system. In more detail, as the $A,B,C$ parameters are varied, more and periodic trajectories for the flow appear via period-doubling and saddle-node bifurcations, until they finally collapse to a chaotic attractor (see \cite{MBKPS}, \cite{BBS},\cite{WZ} and \cite{Ross76}). In addition, as observed numerically in \cite{SBM}, the Rössler system includes a period-doubling route to chaos and satisfies a form of Feigenbaum Universality with a constant $\delta\approx4.669$ - the same  as the constant originally observed for the Logistic Family, $\lambda x(1-x)$, $\lambda\in(0,4]$ (see \cite{Fei}).\\

In contrast to the large corpus of numerical studies, analytic results on the Rössler system are few. The existence of chaotic dynamics for the Rössler system in the original parameters considered by O.E. Rössler was first proven in \cite{Zgli97} and later on, also in \cite{XSYS03} - both with the aid of rigorous numerical methods. The existence of periodic orbits for some parameter values was proven in \cite{Pan}, while later on, in \cite{LiLl} the dynamics of the Rössler system at $\infty$ were studied by applying the Poincare sphere method. More recent results include \cite{CNV}, where the existence of an invariant torus (and its breakdown) was analyzed - as well as \cite{zgli1} and \cite{zgli2}, where the existence of infinitely many periodic trajectories was proven at specific parameter values, using rigorous numerical methods.\\

To our knowledge, to this date no study on the Rössler system ever attempted to study the existence of the Rössler attractor and analyze its dynamics - and it is precisely this gap this paper aims to address. Namely, in this paper we prove a topological condition, which, when satisfied by the Rössler system, implies the existence of an attracting invariant set for the flow, $A$. As we will see, $A$ strongly resembles the numerically observed attractor. Later on, inspired by \cite{zgli1} and \cite{zgli2} (and by the numerical studies cited above), by applying previous results obtained by the author in \cite{I} we prove the dynamics of the Rössler system on its non-wandering set can be reduced to the dynamics of a one-dimensional model - namely, those of the quadratic family, given by $p_c(x)=x^2+d$, $d\in[-2,\frac{1}{4}]$ - thus giving an approximate geometric model for the flow.\\

To introduce our results in greater detail, let us first recall that whenever we have $C^2-4AB>0$, the Rössler system (as given by Eq.\ref{Vect}) generates two fixed points, both saddle-foci (of opposing indices), $P_{In}=(\frac{C-\sqrt{C^2-4AB}}{2},-\frac{C-\sqrt{C^2-4AB}}{2A},\frac{C-\sqrt{C^2-4AB}}{2A})$ and $P_{Out}=(\frac{C+\sqrt{C^2-4AB}}{2},-\frac{C+\sqrt{C^2-4AB}}{2A},\frac{C+\sqrt{C^2-4AB}}{2A})$. In particular, $P_{In}$ has a two-dimensional unstable manifold $W^u_{In}$, while $P_{Out}$ has a two-dimensional stable manifold $W^s_{Out}$ (see the illustration in Fig.\ref{loci}). Conversely, $P_{In}$ generates a one-dimensional stable manifold $W^s_{In}$, while $P_{Out}$ generates a one-dimensional unstable manifold $W^u_{Out}$. By qualitatively analyzing the vector field, we first prove the following topological criterion for the existence of an attractor (see Th.\ref{attrac} in Sect.2):

\begin{claim}
\label{th1}    Consider any $(A,B,C)$ s.t. $C-4AB>0$. Then, there exists a hyperplane $S$, varying smoothly with $B$, s.t. whenever $W^s_{Out}\cap S$ is a closed curve homotopic to $S^1$ the flow generates an attractor $A$. Moreover, $A$ is robust under sufficiently small $C^1$ perturbations.
\end{claim}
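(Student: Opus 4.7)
The strategy is to use the hypothesis on $W^s_{Out}\cap S$ to construct an explicit trapping region and then argue that the maximal invariant set it contains is the desired attractor, using standard robustness of trapping regions under $C^1$ perturbations. First I would identify the hyperplane $S$ adapted to \eqref{Vect}: reading the $\dot Z$ equation, a natural candidate is a plane close to $\{X=C\}$ or a shifted version depending on $B$, chosen so that the vector field is transverse to $S$ away from a well-controlled subset and so that $S$ separates $P_{In}$ from $P_{Out}$. The plane would be oriented so that, on the side containing $P_{In}$, the two-dimensional unstable manifold $W^u_{In}$ is pushed into the relevant half-space and $W^s_{Out}$ crosses $S$ into the same half-space.

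Assuming the hypothesis that $\gamma := W^s_{Out}\cap S$ is a Jordan curve on $S$, I would let $D\subset S$ be the closed disk it bounds and define the candidate trapping set
\[
V \;=\; \overline{\bigcup_{t\ge 0}\varphi_t(D)},
\]
where $\varphi_t$ denotes the Rössler flow. The main content of the proof is then to show that $V$ is compact, that $\partial V$ consists of $D$ together with a piece of $W^s_{Out}$ (ending at $P_{Out}$), and that the flow on $\partial V\setminus D$ is tangent or inward. Tangency on the $W^s_{Out}$ piece is automatic since that manifold is flow-invariant; inward orientation along $\mathrm{int}(D)$ follows from the chosen side of $S$. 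To get compactness, I would produce a large ball $B_R$ on whose boundary the Rössler vector field points strictly inward outside a neighbourhood of the two fixed points; the quadratic term $Z(X-C)$ together with the linear part of \eqref{Vect} and $C^2-4AB>0$ should give such a Lyapunov-type estimate directly. Setting $\mathcal{A}:=\bigcap_{t\ge 0}\varphi_t(V)$ yields an attractor in the standard Conley sense.

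For the $C^1$-robustness, I would observe that every ingredient above is $C^1$-open: transversality of the vector field to $S$, inward-pointing on $\partial B_R$, and persistence of $W^s_{Out}$ as a $C^1$-graph intersecting $S$ in a nearby Jordan curve (by the local stable manifold theorem for the hyperbolic saddle-focus $P_{Out}$). Thus any sufficiently small $C^1$ perturbation produces a nearby trapping region and hence a nearby attractor. The step I expect to be the main obstacle is verifying that $V$ is in fact bounded and that its boundary has the claimed structure: one must rule out that an orbit entering through $D$ escapes to infinity along the folding direction of the vector field before it can be recaptured by a neighbourhood of $W^s_{Out}$. Making the Lyapunov estimate quantitative enough to handle this, while still being compatible with the geometric choice of $S$, will be the delicate part, and is where the hypothesis $C^2-4AB>0$ (which both guarantees the existence of $P_{In},P_{Out}$ and shapes the cross-section geometry) is used essentially.
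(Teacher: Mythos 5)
There is a genuine gap, and it sits exactly where you yourself predicted the difficulty would be: compactness of the trapping region. You propose to obtain boundedness from a Lyapunov-type estimate producing a large ball $B_R$ on whose boundary the field points inward. No such ball exists for the R\"ossler system: the divergence of the field in Eq.\ref{Vect} is $A+X-C$, which is positive for large $X$, the separatrix $\Gamma_{Out}\subseteq W^u_{Out}$ escapes to $\infty$, and the paper in fact proves (Lemma \ref{attrac3}) that the flow possesses an \emph{unbounded repeller} -- so trajectories do escape to infinity and no globally absorbing ball can exist. Relatedly, your candidate set $V=\overline{\bigcup_{t\ge 0}\varphi_t(D)}$ cannot serve as stated: since $\partial D\subseteq W^s_{Out}$ flows into $P_{Out}$, one gets $P_{Out}\in V$, whereas the theorem asserts $P_{Out}\notin A$; moreover $V$ then contains the closure of at least one branch of $W^u_{Out}$, and the boundedness of the relevant branch $\Delta_{Out}$ is a \emph{conclusion} of the theorem, not a hypothesis you may invoke to prove $V$ compact. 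Any correct trapping region must be cut off \emph{before} it reaches $P_{Out}$.

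The paper's construction does precisely this. The hyperplane is $S_b=\{z=-b\}$ in the coordinates of Eq.\ref{Field} (not a plane near $\{X=C\}$: on the latter $\dot X=-Y-Z$ changes sign, whereas on $S_b$ one computes $\dot z\mid_{z=-b}=bx-b(x-c)=bc>0$ identically, so $S_b$ is a genuine one-way membrane into the half-space $Q_b=\{z>-b\}$ containing both fixed points). The hypothesis that $\delta=\overline{W^s_{Out}}\cap\overline{S_b}$ is a closed curve is what replaces your missing dissipativity estimate: it forces $Q_b\setminus\overline{W^s_{Out}}$ to split into two components, one of which, $Q_1$, contains $\Delta_{Out}$, and the trapping region $Q_A$ is the compact set bounded by a piece of $S_b$, a small local cross-section $S_1$ transverse to $\Delta_{Out}$, and the flow tube carrying a curve $\gamma\subseteq S_b\cap\partial Q_1$ onto $S_1$; by construction $P_{Out}\notin\overline{Q_A}$. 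Boundedness is thus a consequence of the topology of $W^s_{Out}$ shielding $Q_1$, not of any norm estimate. Your robustness argument (trapping regions with strictly inward-pointing boundary are $C^1$-open) is essentially the paper's and would go through once a correct compact trapping region is in hand.
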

As we will see, the attractor $A$ closely correlates with the results of the numerical studies. Moreover, it is easy to see Th.\ref{th1} is essentially an existence theorem - as such, it does not teach us too much about the dynamics of the said attractor: for example, it is easy to see it does not teach us if $A$ is a chaotic attractor or not. And indeed, by the numerical evidence there is no reason to assume the attractor $A$ is necessarily chaotic - as observed in many numerical studies, there are parameter values at which the Rössler attractor is a stable, attracting periodic trajectory (see, for example, \cite{G}, \cite{MBKPS} or \cite{BBS}). Therefore, in order to describe the dynamics of the Rössler system on its non-wandering set we take a different approach - which we do in Th.\ref{th3}.\\

In order to introduce Th.\ref{th3}, let us first recall the notion of a \textbf{trefoil parameter} (see Def.\ref{def32} in the next section). Roughly speaking, a parameter $(A,B,C)$ is a trefoil parameter provided the corresponding Rössler system generates a heteroclinic trefoil knot in the $3-$sphere $S^3$. Trefoil parameters were originally introduced by the author in \cite{I} as an idealized form of the Rössler system - and as proven in \cite{I}, at trefoil parameters the dynamics of the Rössler are essentially those of a suspended Smale Horseshoe and include infinitely many periodic trajectories (for more details see Th.3.15 in \cite{I}, or Th.\ref{th31} in the next section). Now, let $P$ denote the parameter space of Eq.\ref{Vect}, and recall that for $d\in[-2,\frac{1}{2}]$ we denote by $p_d$ the polynomial $x^2+d$. Using the properties of trefoil parameters, we prove the following (see Th.\ref{polytheorem}):

\begin{claim}
\label{th3}   Let $p\in P$ be a trefoil parameter. Then, there exists a function $\Pi:P\to[-2,\frac{1}{4}]$ s.t. the following is satisfied:
\begin{itemize}
    \item Set $d=\Pi(v)$. Then, there exists a cross-section $U_v$ and a non-empty subset $I_v\subseteq U_v$ on which the first-return map for the flow $f_v:I_v\to I_v$ is well-defined. Moreover, there exists a map $\pi_v:I_v\to\mathbf{R}$ s.t. $\pi_v\circ f_v=p_d\circ\pi_v$.
    \item Given any $n>0$, provided $v$ is sufficiently close to the trefoil parameter $p$, $I_v$ includes at least $n$ distinct periodic orbits for $f_v$, denoted by $\Omega_1,...,\Omega_n$. Moreover, $\pi_v$ is continuous on $\Omega_1,...,\Omega_n$ - and for every $1\leq i\leq n$, $\pi_v(\Omega_i)=P_i$ is a periodic orbit for $p_d$ of the same minimal period.
    \item $\Pi$ is continuous at trefoil parameters $p$, and satisfies $\Pi(p)=-2$. Moreover, $\pi_p(I_p)$ includes all the periodic orbits of $p_{-2}$ on the real line.
\end{itemize}
\end{claim}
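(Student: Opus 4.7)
The plan is to construct $\Pi$, the cross-sections $U_v$, the sets $I_v$ and the semiconjugacies $\pi_v$ via a one-dimensional reduction of the first-return map, followed by a Milnor--Thurston kneading correspondence with the quadratic family. I would begin at the trefoil parameter $p$ itself: by Th.\ref{th31} the first-return map on a cross-section $U_p$ carries a compact invariant horseshoe $\Lambda_p$ topologically conjugate to the full $2$-shift. Since the return map is uniformly hyperbolic with a dominated splitting on $\Lambda_p$, collapsing along the stable foliation yields a one-dimensional quotient $\tilde f_p$ which is unimodal and semiconjugate to $f_p$. The full $2$-shift is in turn conjugate to the Chebyshev map $p_{-2}(x)=x^2-2$ on $[-2,2]$, so composing these two conjugacies gives $\pi_p$ and forces $\Pi(p)=-2$; the image $\pi_p(I_p)$ then contains every periodic orbit of $p_{-2}$ on the real line, which handles the last bullet.

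For $v$ in a neighborhood of $p$, the transverse cross-section $U_p$ persists to a cross-section $U_v$ by the implicit function theorem, and the fold structure of the return map --- produced by the trefoil machinery of \cite{I} --- persists by normal hyperbolicity of the dominated splitting. I would then define $I_v$ as the maximal $f_v$-invariant set in the persistence of the ``fold region'' of $U_p$, and extract a one-dimensional quotient $\tilde f_v$ by the same stable-leaf collapsing. The quadratic family $\{p_d\}_{d\in[-2,1/4]}$ is a full family of unimodal maps in the Milnor--Thurston sense, so there is a unique $d=\Pi(v)$ such that $\tilde f_v$ and $p_d$ share the same kneading invariant. The map $\pi_v$ is then the composition of the stable-leaf projection with the canonical order-preserving surjection from $\tilde f_v$ onto $p_d$ defined by itineraries, and by construction satisfies $\pi_v\circ f_v=p_d\circ\pi_v$.

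The equality $\Pi(p)=-2$, together with continuity of $\Pi$ at $p$ and the periodic-orbit count, are two faces of the same persistence fact: every periodic orbit of $p_{-2}$ pulls back by $\pi_p$ to a hyperbolic periodic orbit of $f_p$, and hyperbolic periodic orbits persist under $C^1$-small perturbations. Thus for any $n$, each of the first $n$ periodic orbits of $p_{-2}$ gives rise, for $v$ sufficiently close to $p$, to a hyperbolic periodic orbit $\Omega_i\subset I_v$ of $f_v$, and its image $\pi_v(\Omega_i)$ is a periodic orbit of $p_{\Pi(v)}$ of the same minimal period. This forces the number of periodic orbits of $\tilde f_v$ of each period to converge, as $v\to p$, to the corresponding count for the full shift, hence the topological entropy of $\tilde f_v$ converges to $\log 2$; combined with the uniqueness of the kneading match, this gives $\Pi(v)\to -2=\Pi(p)$.

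The hardest step, I expect, is ensuring that the stable-leaf projection $\sigma_v$ is well-defined on $I_v$ and varies continuously in $v$ in a topology strong enough to pin down the kneading invariant. Away from the trefoil parameter the horseshoe may partially collapse, so one needs a careful normally hyperbolic foliation argument to justify both the persistence of the fold region and the fact that $\tilde f_v$ is genuinely a unimodal map of the class to which Milnor--Thurston theory applies (continuous, with a single turning point and proper monotonic branches). A secondary subtlety is pinning down $I_v$ as a natural object --- large enough to capture all surviving periodic orbits, yet small enough that $f_v$ maps it into itself --- which I expect to handle by working with the maximal invariant set in a fixed topological ``rectangle'' inherited from the trefoil geometry.
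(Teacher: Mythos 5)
Your proposal rests on a hyperbolicity hypothesis that the paper does not have and that is not available from the results it builds on. Th.\ref{th31} produces only a \emph{topological} horseshoe: a factor map $\pi:Q\to\{1,2\}^{\mathbf{N}}$ semiconjugating $f_p$ to the shift, with periodic symbols admitting periodic preimages of the right period. It does not assert that $f_p$ is uniformly hyperbolic on $Q$, that there is a dominated splitting, or that the invariant set is conjugate (rather than semiconjugate) to the full $2$-shift. Consequently there is no stable foliation to collapse along, so the one-dimensional quotient $\tilde f_v$ at the heart of your argument is not known to exist; and the periodic orbits of $f_p$ are not known to be hyperbolic, so your persistence step (``hyperbolic periodic orbits persist under $C^1$-small perturbations'') has no foundation. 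The persistence of periodic orbits near a trefoil parameter is exactly the content of Th.\ref{th41}, which must be invoked as a black box rather than rederived from hyperbolicity. You flag the unimodality of $\tilde f_v$ as the hardest step, but the problem is upstream: the object $\tilde f_v$ itself is not constructed.

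The paper's proof avoids all of this by never leaving symbol space. It sets $Per(v)=$ the periodic itineraries realized by $f_v$ on $I_v$ (via the coding $\pi_v$ inherited from the deformed partition $D_{1,v},D_{2,v}$), sets $Per(c)=$ the periodic itineraries of $p_c$ on its invariant set, and defines $\Pi(v)=\sup\{c\in[-2,\tfrac14]\mid Per(v)\subseteq Per(c)\}$, with $\Pi(p)=-2$ at trefoil parameters. Continuity of $\Pi$ at $p$ follows from Th.\ref{th41} (more and more periodic symbols are realized as $v\to p$) together with the monotonicity/maximality statement Th.\ref{maximal}. The factor map is $\zeta_v=\xi_d^{-1}\circ\pi_v$ restricted to the common symbol set $T_v=It(d)\cap S_v$, and its continuity at periodic orbits is obtained from purely one-dimensional facts --- at most one non-repelling cycle (Th.\ref{classification}), the No Wandering Domain Theorem, and Cor.\ref{single} --- which show $\xi_d^{-1}$ is single-valued and continuous at all but at most three periodic orbits. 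If you want to salvage your route, you would need to either prove hyperbolicity of the return map at trefoil parameters (not done in \cite{I}) or replace the foliation-collapse and kneading-matching steps with the symbolic comparison the paper uses.
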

Th.\ref{th3} has the following meaning - given a parameter $v$ sufficiently close to a trefoil parameter $p$, the flow dynamics around its periodic trajectories are essentially those of a suspended quadratic polynomial (for illustrations, see Fig.\ref{match1} and Fig.\ref{template}). Let us recall the dynamics of the Rössler attractor are long known from numerical studies to behave similarly to the quadratic family (see, for example, \cite{MBKPS}, \cite{GKP} and \cite{BBS})- therefore, Th.\ref{th3} can be considered as an analytic counterpart of these numerical observations. Finally, let us remark that even though it is not at all obvious from the text, the results of this paper were strongly influenced by notions and ideas originating in \cite{PY} , \cite{ST}, \cite{BeH}, \cite{Y}, and \cite{Pi}. In particular, Th.\ref{th3} originated by an attempt to prove the existence of a period-doubling cascade for the Rössler system.
\section{Preliminaries}
In this section we discuss and introduce many facts and notions on which the results of this paper are based. This section is organized as follows - we begin with a survey of the basics of the Rössler system and its dynamics (see Section $1.1$). Following that, in Section $1.2$ we go over several basic facts about the dynamics of real, quadratic polynomials.
\subsection{Chaotic dynamics in the Rössler system.}

From now on given $(a,b,c)\in\mathbf{R}^3$, we switch to the more convenient form of the Rössler system:
\begin{equation} \label{Field}
\begin{cases}
\dot{x} = -y-z \\
 \dot{y} = x+ay\\
 \dot{z}=bx+z(x-c)
\end{cases}
\end{equation}
We always denote this vector field corresponding to $(a,b,c)\in\mathbf{R}^3$ by $F_{a,b,c}$. This definition is slightly different from the one presented in Eq.\ref{Vect} - however, setting $p_1=\frac{-C+\sqrt{C^2-4AB}}{2A}$, it is easy to see that whenever $C^2-4AB>0$, $(X,Y,Z)=(x-ap_1,y+p_1,z-p_1)$ defines a change of coordinates between the vector fields in Eq.\ref{Vect} and Eq.\ref{Field}. Since the vector field in Eq.\ref{Field} depends on three parameters, $(a,b,c)$, we now specify the region in the parameter space in which we prove our results. The parameter space $P\subseteq\mathbf{R}^3$ we consider throughout this paper is composed of parameters satisfying the following\label{eq:9}:

\begin{itemize}
\item \textbf{Assumption $1$} -  for every parameter $p\in P,p=(a,b,c)$ the parameters satisfy $a,b\in(0,1)$ and $c>1$. For every choice of such $p$, the vector field $F_p$ given by Eq.\ref{Field} always generates precisely two fixed points - $P_{In}=(0,0,0)$ and $P_{Out}=(c-ab,b-\frac{c}{a},\frac{c}{a}-b)$.
\item \textbf{Assumption 2 }- for every $p\in P$ the fixed points $P_{In},P_{Out}$ are both saddle-foci of opposing indices. In more detail, we always assume that $P_{In}$ has a one-dimensional stable manifold, $W^s_{In}$, and a two-dimensional unstable manifold, $W^u_{In}$. Conversely, we always assume $P_{Out}$ has a one-dimensional unstable manifold, $W^u_{Out}$, and a two-dimensional stable manifold, $W^s_{Out}$ (see the illustration in Fig.\ref{loci} and Fig.\ref{crosx}). 
\item \textbf{Assumption 3 }- For every $p\in P$, let $\gamma_{In}<0$ and $\rho_{In}\pm i\psi_{In}$, $\rho_{In}>0$ denote the eigenvalues of $J_p(P_{In})$, the linearization of $F_p$ at $P_{In}$, and set $\nu_{In}=|\frac{\rho_{In}}{\gamma_{In}}|$. Conversely, let $\gamma_{Out}>0$, $\rho_{Out}\pm i\psi_{Out}$ s.t. $\rho_{Out}<0$ denote the eigenvalues of $J_p(P_{Out})$, the linearization at $P_{Out}$, and define $\nu_{Out}=|\frac{\rho_{Out}}{\gamma_{Out}}|$. We will refer to $\nu_{In},\nu_{Out}$ as the respective saddle indices at $P_{In},P_{Out}$, and we will always assume $(\nu_{In}<1)\lor(\nu_{Out}<1)$ - that is, for every $p\in P$ at least one of the fixed points satisfies the \textbf{Shilnikov condition} (see Th.13.8 in \cite{SSTC} or \cite{LeS} for more details on the connection between the Shilnikov condition and the onset of chaos). 
\end{itemize}

It is easy to see the parameter space $P$ we are considering is not only open in $\mathbf{R}^3$ - moreover, it also includes the region considered in many numerical studies (see, for example, \cite{MBKPS}, \cite{BBS} and \cite{GKP} - among others). As proven in \cite{I}, given any parameter $(a,b,c)=v\in P$ we have the following result about the global dynamics of the flow:

\begin{claim}
  \label{th21}  For every parameter $v\in P$, the vector field $F_v$ satisfies the following:
\begin{itemize}
    \item $F_v$ extends to a continuous vector field on $S^3$, with $\infty$ added as a fixed point for the flow (in $S^3$). As such, $F_v$ has precisely three fixed points in $S^3$ - $P_{In},P_{Out}$ and $\infty$.
    \item There exists an unbounded, one-dimensional invariant manifold $\Gamma_{In}\subseteq W^s_{In}$, s.t. $\overline{\Gamma_{In}}$ connects $P_{In},\infty$.
    \item There exists an unbounded, one-dimensional invariant manifold $\Gamma_{Out}\subseteq W^u_{Out}$, s.t. $\overline{\Gamma_{Out}}$ connects $P_{Out},\infty$.
    \item Moreover, $\Gamma_{In},\Gamma_{Out}$ are not knotted with one another (see the illustration in Fig.\ref{fig7}).
\end{itemize}
\end{claim}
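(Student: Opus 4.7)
The plan is to realize $\mathbf{R}^{3}\cup\{\infty\}$ as $S^{3}$ and then bootstrap local information near $P_{In}, P_{Out}$ into the required global picture. For the first bullet, since every component of $F_{v}$ is polynomial of degree at most two, dividing by a smooth positive function $\sigma$ growing like $1+\|(x,y,z)\|^{2}$ yields a bounded analytic field $\sigma^{-1}F_{v}$ with the same oriented orbits as $F_v$. Pulling this rescaled field back through a stereographic chart extends it continuously to $S^{3}$ and forces it to vanish at $\infty$. The two finite equilibria are recovered by direct elimination: $\dot{x}=\dot{y}=0$ force $x=-ay$ and $y=-z$, and substitution into $\dot{z}=0$ gives a quadratic in $x$ with positive discriminant $c^{2}-4ab$ by Assumption $1$, whose two roots are exactly $P_{In}$ and $P_{Out}$.

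For the existence of $\Gamma_{In}$ and $\Gamma_{Out}$, Assumption $2$ tells us that $W^{s}_{In}$ and $W^{u}_{Out}$ are one-dimensional, tangent at their base points to the real eigenvectors associated with $\gamma_{In}$ and $\gamma_{Out}$, respectively, and each decomposes into two branches. I would pick $\Gamma_{In}$ as one branch of $W^{s}_{In}$ and trace it in backward time. A Lyapunov-cone argument tailored to the polynomial form of $F_{v}$---for instance using a trapping region of the form $\{z\geq M,\, |x|+|y|\leq\varepsilon z\}$ with $M,\varepsilon$ chosen via the inequalities $a,b\in(0,1)$, $c>1$---shows that $z$ grows monotonically along the backward orbit, so the branch escapes every compact set. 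Compactness of $S^{3}$ then places the backward limit set inside $\{P_{In},P_{Out},\infty\}$; return to $P_{In}$ is ruled out because the orbit already lies in $W^{s}_{In}$, and a limit at $P_{Out}$ is impossible since $W^{u}_{Out}$ is one-dimensional and cannot contain $\Gamma_{In}$. Hence $\overline{\Gamma_{In}}$ must contain $\infty$; the construction of $\Gamma_{Out}$ is the exact time-reversed mirror, using a forward-time cone argument starting from $P_{Out}$.

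For the fourth bullet, I would exhibit an embedded $2$-sphere $\Sigma\subseteq S^{3}$ passing through $\infty$, avoiding $P_{In}$ and $P_{Out}$, and placing $\overline{\Gamma_{In}}$ and $\overline{\Gamma_{Out}}$ on opposite sides. The natural candidate is the one-point compactification of a graph $\{z=\eta(x,y)\}$, tuned so that the cone estimates of the previous paragraph confine one branch above $\Sigma$ and the other below. Once such a separator exists, Jordan--Brouwer in $S^{3}$ splits the ambient space into two open $3$-balls, each containing one arc, and a standard isotopy argument identifies the configuration with the trivial two-component theta graph, giving the unknottedness claim.

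The main obstacle is the \emph{uniformity} of the separation step over the entire open parameter set $P$. The assumptions on $(a,b,c)$ are mild enough that the asymptotic directions of the two branches may vary continuously---and possibly non-trivially---with the parameter, so the cone axes and the separating surface must be chosen parameter-dependently, varying continuously over $P$ and never crossing either branch. Upgrading the Lyapunov-cone estimates so that they rule out every possible entanglement between $\Gamma_{In}$ and $\Gamma_{Out}$ for \emph{all} parameters in $P$, rather than only generically, is the delicate part of the argument; the first three bullets are essentially local in character, but unknottedness is the genuinely global one.
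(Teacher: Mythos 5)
The paper does not actually prove this statement here: it is imported verbatim from Th.~2.8 of \cite{I} (``For a proof, see Th.2.8 in \cite{I}''), so there is no internal argument to compare against. Judged on its own terms, your outline has a reasonable architecture for the first bullet (rescaling by $\sigma\sim 1+\|p\|^{2}$ and pushing through a stereographic chart is fine, and the fixed-point elimination is correct in substance), but the core of the theorem --- bullets two through four --- has genuine gaps as written. For the separatrices: (i) you never show that a branch of $W^{s}_{In}$ (resp.\ $W^{u}_{Out}$) actually \emph{enters} your cone $\{z\ge M,\ |x|+|y|\le\varepsilon z\}$; the branches leave $P_{In}$ along the $\gamma_{In}$-eigendirection, which has nothing to do with $\{z\ge M\}$ for large $M$, and connecting the local branch to a far-field trapping region is precisely the global content of the claim. (ii) The cone itself is dubious: on it $\dot z=bx+z(x-c)$ contains the term $xz$, which can be positive of order $\varepsilon z^{2}$ and thus dominate $-cz$, so $z$ need not grow in backward time; compare Lemma \ref{attrac3} of this paper, where the unbounded repeller is carved out of the nullclines $\{\dot x=0\}$, $\{\dot y=0\}$ and a coordinate plane with explicit sign checks on each face. (iii) The fallback via compactness is incorrect: the $\alpha$-limit set of an orbit in $S^{3}$ is a compact connected invariant set, not necessarily a fixed point, so it is not confined to $\{P_{In},P_{Out},\infty\}$; and the dimension count does not exclude a branch of $W^{s}_{In}$ lying inside $W^{u}_{Out}$ --- indeed the bounded branch $\Delta_{In}$ \emph{does} coincide with a branch of $W^{u}_{Out}$ at trefoil parameters (that is the heteroclinic orbit $\Theta$ of Def.\ref{def32}).

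For the fourth bullet, a separating sphere is not enough: both closures contain $\infty$, which lies on your proposed $\Sigma$, so ``opposite sides'' already needs care; and even granting a clean separation you must still show that each arc is unknotted (boundary-parallel) in its own ball --- typically via a coordinate that is monotone along the arc --- before concluding that the union is a trivial arc. Finally, a small but real slip in the first bullet: for the normal form of Eq.\ref{Field} the fixed-point equation reduces to $x(x+ab-c)=0$, whose discriminant is $(c-ab)^{2}$; the quantity $c^{2}-4ab$ you quote is the discriminant for the original form Eq.\ref{Vect}, and the relevant nondegeneracy here is simply $c\neq ab$, which follows from $c>1>ab$.
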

For a proof, see Th.2.8 in \cite{I} (for an illustration, see Fig.\ref{fig7}). To continue, we now introduce the following notion:

\begin{definition}
    \label{nowandering} Let $F$ be a $C^1$ vector fields of $\mathbf{R}^3$, and let $\phi_t$, $t\in\mathbf{R}$ denotes the corresponding flow. Then, the \textbf{\textit{non-wandering}} set would be defined as $\{x\in\mathbf{R}^3|\lim_{t\to\infty}\phi_t(x)\ne\infty\}$ - i.e., the collection of initial conditions whose trajectories are not attracted to $\infty$.
\end{definition}
\begin{figure}[h]
\centering
\begin{overpic}[width=0.4\textwidth]{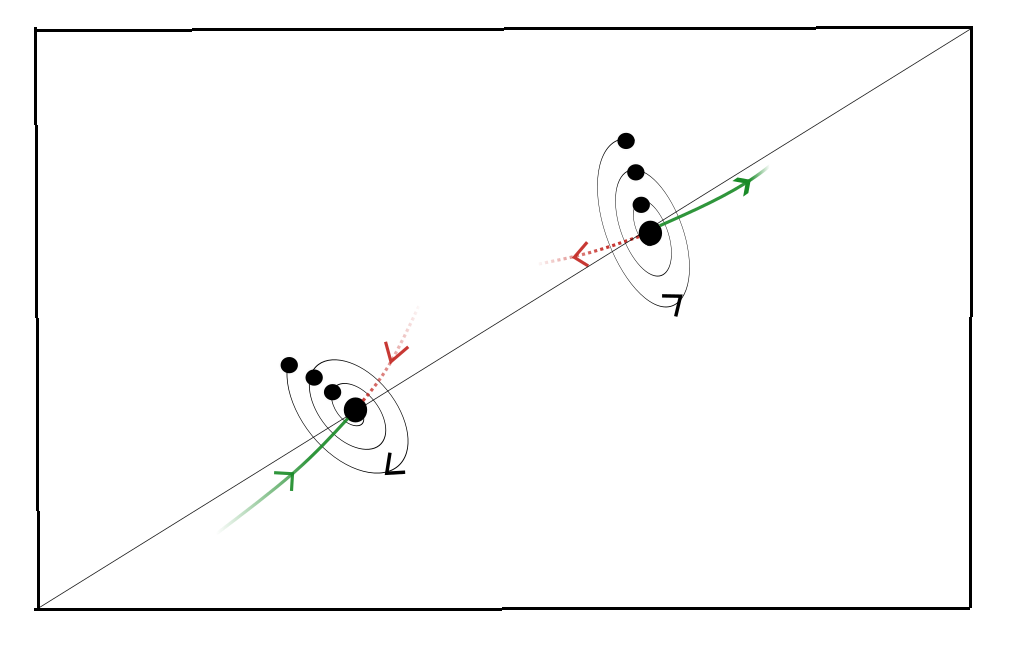}%
\put(630,280){$W^s_{Out}$}
\put(750,430){$W^u_{Out}$}
\put(690,390){$P_{Out}$}
\put(390,130){$W^u_{In}$}
\put(410,220){$P_{In}$}
\put(440,420){$U_p$}
\put(570,180){$L_p$}
\put(250,110){$W^s_{In}$}
\end{overpic}
\caption[Fig2]{The local dynamics around the fixed points on $Y_p$ - $l_p$ is the straight line separating $U_p,L_p$. The green and red flow lines are the one-dimensional separatrices in $W^s_{In},W^u_{Out}$.}\label{loci}
\end{figure}

It is easy to see the non-wandering set w.r.t. to any $C^1$ vector field $F$ includes all the periodic trajectories for the flow. Now, given a parameter $v\in P,v=(a,b,c)$, begin by considering the cross-section $Y=\{\dot{y}=0\}=\{(x,-\frac{x}{a},z)|x,z\in\mathbf{R}\}$ (see Eq.\ref{Field}), and consider its sub-curve $l_v=\{(x,-\frac{x}{a},\frac{x}{a})|x\in\mathbf{R}\}$. Because the normal vector to $Y$ is $N=(1,a,0)$, it follows by direct computation that $l_v=\{s\in Y|F_v(s)\bullet N=0\}$, and $P_{In},P_{Out}\in l_v$. It follows $Y\setminus l_v$ constitutes of two components, both half planes, parameterized as follows:

\begin{itemize}
    \item $U_v=\{(x,-\frac{x}{a},z)|x\in\mathbf{R}, \frac{x}{a}<z\}$ - that is, the upper half plane (see the illustration in Fig.\ref{loci}). .
    \item $L_v=\{(x,-\frac{x}{a},z)|x\in\mathbf{R}, \frac{x}{a}>z\}$ - that is, the lower half plane (see the illustration in Fig.\ref{loci}).
\end{itemize}

By the definition above, the vector field $F_v$ is transverse to both $U_v$ and $L_v$. It is also easy to see both $U_v$ and $L_v$ vary smoothly when the parameter $v$ are varied smoothly in $P$. As proven in Lemma 2.1 and Lemma 2.2 in \cite{I} we have:

\begin{corollary}
\label{TR}    For any $v\in P$, the cross-section $U_v$ defined above satisfies the following:
    \begin{itemize}
        \item The two-dimensional $W^u_{In},W^s_{Out}$ are transverse to $U_v$ at $P_{In},P_{Out}$ (respectively).
        \item Let $T$ be a periodic trajectory for $F_v$. Then, $T$ is transverse to the closed half-plane $\overline{U_v}$ in at least one point. 
    \end{itemize}
\end{corollary}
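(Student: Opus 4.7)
The plan is to handle the two assertions separately, each by direct computation with the explicit form of $F_v$ given in Eq.\ref{Field}. A common input is that the cross-section $Y=\{\dot y=0\}$ is the zero set of the linear function $x+ay$, so its (unscaled) normal is $N=(1,a,0)$; a short computation gives $F_v\cdot N=-(y+z)+a\dot y$, which on $Y$ reduces to $F_v\cdot N=-(y+z)$ and therefore vanishes precisely on $l_v$. In particular, $F_v$ is transverse to $Y$ at every point of $Y\setminus l_v$, which is the geometric content behind both bullets.

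For the first bullet, the tangent planes $T_{P_{In}}W^u_{In}$ and $T_{P_{Out}}W^s_{Out}$ coincide with the real two-dimensional invariant subspaces of $J_{F_v}(P_{In})$ and $J_{F_v}(P_{Out})$ corresponding to the complex eigenpairs $\rho_{In}\pm i\psi_{In}$ and $\rho_{Out}\pm i\psi_{Out}$. I would take any complex eigenvector $v$ of $J_{F_v}(P_{In})$ for $\lambda=\rho_{In}+i\psi_{In}$ and read off from the second row of $J_{F_v}(P_{In})v=\lambda v$ that $N\cdot v=v_1+av_2=\lambda v_2$; a quick check forces $v_2\neq 0$ (otherwise $v=0$), so since $\lambda\notin\mathbb{R}$, at least one of $N\cdot\mathrm{Re}(v)$ and $N\cdot\mathrm{Im}(v)$ is nonzero. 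Hence the invariant eigenspace is not contained in $T_{P_{In}}Y$, which is exactly transversality of $W^u_{In}$ to $\overline{U_v}$ at $P_{In}$. The identical calculation applied to $J_{F_v}(P_{Out})$ settles $W^s_{Out}$.

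For the second bullet, let $T$ be a periodic trajectory. Compactness of $T$ forces $y(t)$ to attain its maximum at some $t_0$, so $\dot y(t_0)=0$ places $T(t_0)$ on $Y$. Differentiating along the flow gives $\ddot y=-y-z+a\dot y$, and the second-derivative test at a local maximum yields $y(t_0)+z(t_0)\geq 0$, which is exactly the condition $T(t_0)\in\overline{U_v}$. If the inequality is strict, then $F_v(T(t_0))\cdot N\neq 0$ and the crossing at $t_0$ is the desired transverse one. The main obstacle is the borderline case $T(t_0)\in l_v$, which I would rule out as follows: differentiating once more gives $\dddot y(t_0)=-\dot z(t_0)$, while the parameterization $l_v=\{(x,-x/a,x/a)\}$ combined with $\dot z=bx+z(x-c)$ shows that $\dot z$ vanishes on $l_v$ only at $P_{In}$ and $P_{Out}$. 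Since a periodic orbit contains no fixed point, $\dot z(t_0)\neq 0$, hence $\dddot y(t_0)\neq 0$ and $t_0$ is a horizontal inflection of $y$ rather than a maximum, a contradiction. Therefore the maximum must occur in $U_v$ proper, providing the required transverse intersection.
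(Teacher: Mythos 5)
Your proof is correct. Note that the paper itself does not prove Cor.\ref{TR} --- it is imported verbatim from Lemmas 2.1 and 2.2 of \cite{I} --- so there is no in-text argument to compare against; what the paper does display is precisely your starting computation, namely $F_v\bullet N$ on $Y=\{\dot y=0\}$ with $N=(1,a,0)$ and the identification of $l_v$ as its zero set, and your argument is a sound self-contained completion of that setup. Both halves check out: for the first bullet, the second row of $J_{F_v}v=\lambda v$ does give $N\cdot v=\lambda v_2$ with $v_2\neq 0$ forced (since $v_2=0$ cascades to $v=0$ via the first row), so the real invariant plane of the complex pair is a $2$-plane distinct from $T_{P}Y$, and two distinct $2$-planes in $\mathbf{R}^3$ span it --- that is the transversality claimed. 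For the second bullet, the chain $\dot y(t_0)=0$, $\ddot y(t_0)=-(y+z)(t_0)\leq 0$, and the computation $\dot z=\frac{x}{a}(x-c+ab)$ on $l_v$ vanishing only at $P_{In},P_{Out}$ correctly rules out the degenerate case, since a sign-changing cubic term contradicts $t_0$ being a maximum; this also quietly disposes of the possibility that $y$ is constant along $T$. If you want to polish it, spell out that ``transverse to $U_v$'' for a $2$-manifold at a boundary point of the half-plane means the two tangent planes together span $\mathbf{R}^3$, which for two planes in $\mathbf{R}^3$ is equivalent to their being unequal --- that is the one step you use without stating.
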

\begin{remark}
    In fact, one can prove something stronger - namely, that if $x$ is an initial condition in the non-wandering set for $F_v$ which does not lie on the one-dimensional invariant manifold $W^s_{In}$, its trajectory eventually intersects $U_v$ transversely (see Lemma 2.1 in \cite{I}). We, however, will not need it.
\end{remark}
\begin{figure}[h]
\centering
\begin{overpic}[width=0.4\textwidth]{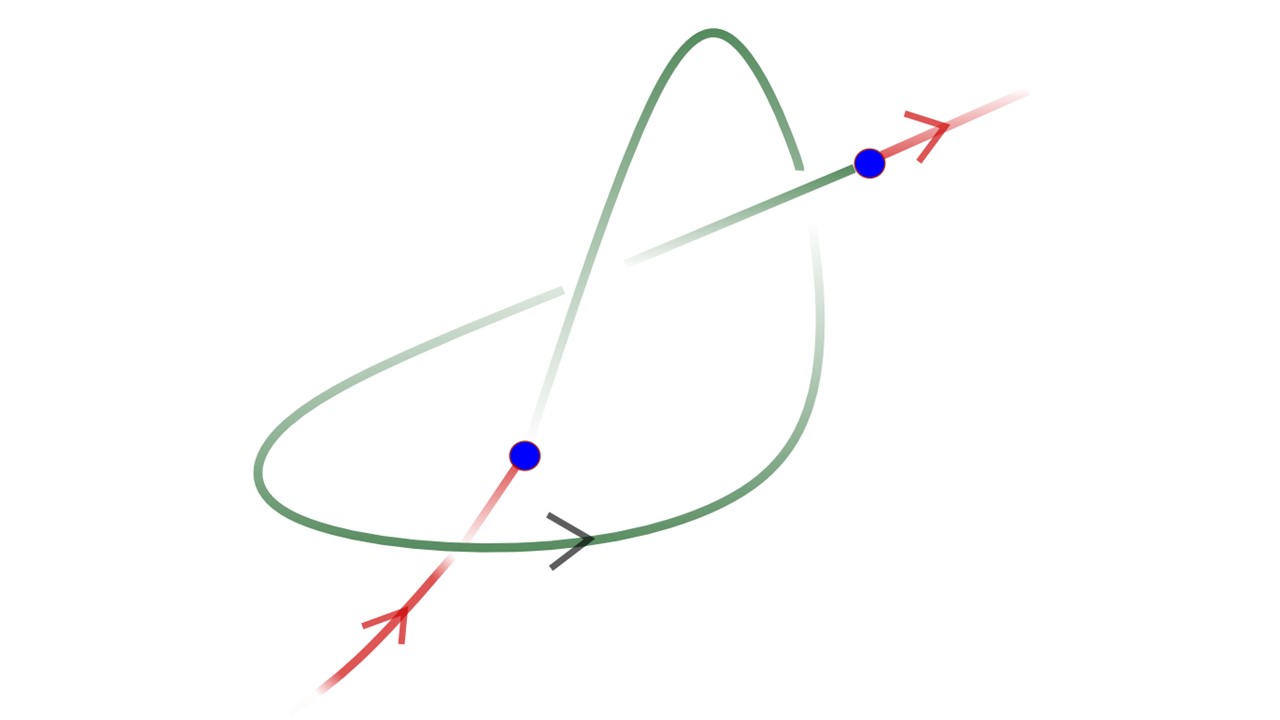}
\put(440,210){$P_{In}$}
\put(340,70){$\Gamma_{In}$}
\put(400,350){$\Theta$}
\put(660,370){$P_{Out}$}
\put(780,430){$\Gamma_{Out}$}
\end{overpic}
\caption[Fig7]{A heteroclinic trefoil knot (see Def.\ref{def32}). $\Theta$ denotes the bounded heteroclinic trajectory, while $\Gamma_{In},\Gamma_{Out}$ denote the unbounded heteroclinic trajectories given by Th.\ref{th21}.}
\label{fig7}
\end{figure}
As stated in the introduction, in \cite{I} the author had proven a criterion for the existence of complex dynamics for the Rössler system. Since the proofs of both Th.\ref{attrac} and \ref{polytheorem} are heavily based on that criterion, let us now introduce it. In order to do so, we first make the following observation - assume $p\in P$ is a parameter s.t. the vector field $F_p$ generates a bounded heteroclinic trajectory $\Theta$ which flows from $P_{Out}$ to $P_{In}$ (in particular, $\Theta=W^s_{In}\cap W^u_{Out}$ - see the illustration in Fig.\ref{fig7}). Now, consider the set $\Lambda=W^s_{In}\cup W^u_{Out}\cup\{P_{In},P_{Out},\infty\}$. Since $W^s_{In}=\Theta\cup\Gamma_{In}$ and $W^u_{Out}=\Theta\cup\Gamma_{Out}$ by Th.\ref{th21} it immediately follows $\Lambda$ is a knot in $\mathbf{R}^3$. Motivated by this observation, from now on we consider a very specific type of such heteroclinic knots, trefoil parameters, defined below:

\begin{definition}\label{def32}
With the notations above, we say $p=(a,b,c)\in P$ is a \textbf{trefoil parameter for the Rössler system} provided the following three conditions are satisfied by the vector field $F_p$:
\begin{itemize}
    \item There exists a bounded heteroclinic trajectory $\Theta$ as in Fig.\ref{fig7}. Consequentially, $\Lambda$ (as defined above) forms a trefoil knot in $S^3$.
    \item The two-dimensional manifolds $W^u_{In}$ and $W^s_{Out}$ coincide. This condition implies $\overline{W^u_{In}}=\overline{W^s_{Out}}$ forms the boundary of an open topological ball - which, from now on, we always denote by $B_\alpha$. It is easy to see $\Theta\subseteq B_\alpha$, while $\Gamma_{In},\Gamma_{Out}\not\subseteq B_\alpha$.
    \item $\Theta\cap \overline{U_p}=\{P_0\}$ is a point of transverse intersection - see the illustration at Fig.\ref{intersect}.
\end{itemize}.
\end{definition}

\begin{figure}[h]
\centering
\begin{overpic}[width=0.4\textwidth]{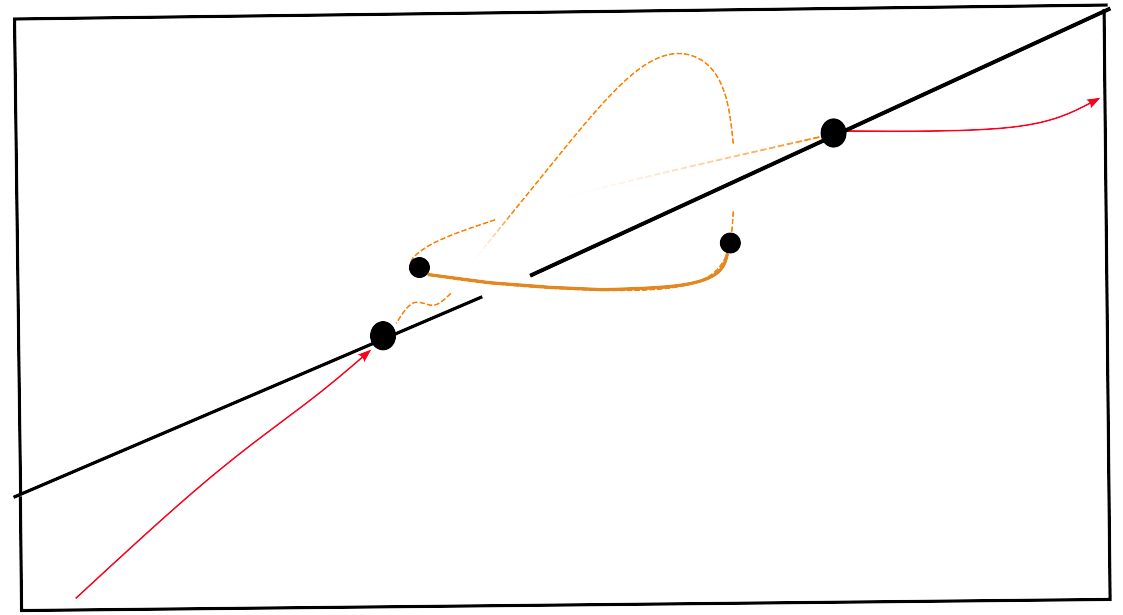}
\put(480,250){$\Theta$}
\put(320,180){$P_{In}$}
\put(160,50){$\Gamma_{In}$}
\put(310,340){$P_0$}
\put(670,310){$P_1$}
\put(100,450){$U_p$}
\put(700,50){$L_p$}
\put(730,370){$P_{Out}$}
\put(900,400){$\Gamma_{Out}$}
\end{overpic}
\caption[The intersection of the heteroclinic trefoil with $\{\dot{y}=0\}$.]{\textit{The heteroclinic trajectory $\Theta$ (for a trefoil parameter) winds once around $P_{In}$ - hence it intersects the half-plane $U_p$ at $P_0$ and the half-plane $L_p$ at $P_1$.}}
\label{intersect}
\end{figure}

Trefoil parameters were originally introduced in \cite{I} as an idealized form of the Rössler system (see the discussion at the beginning of Section $3$ in \cite{I}) - in particular, the existence of parameters in $P$ at which the Rössler system generates a heteroclinic trefoil knot was observed numerically (see Fig.$5.B1$ in \cite{MBKPS}). For more details on the dynamics of the Rössler system at trefoil parameters, see \cite{I} and \cite{I2}. The reason we are interested in trefoil parameters is because at trefoil parameters the Rössler system is chaotic. Namely, as proven at Th.3.15 and Cor.3.26 in \cite{I}, we have the following:

\begin{theorem}\label{th31}
 Let $p\in P$ be a trefoil parameter for the Rössler system, and let  $f_p:\overline{U_p}\to \overline{U_p}$ denote the corresponding first-return map (wherever defined). Additionally, let $\sigma:\{1,2\}^\mathbf{N}\to\{1,2\}^\mathbf{N}$ denote the one-sided shift - then, we have the following:
 \begin{itemize}
     \item There exists a curve $\rho\subseteq\overline{U_p}$ s.t. $\overline{U_p}\setminus\rho$ consists of two components, $D_1,D_2$ (see the illustration in Fig.\ref{part}).
     \item $P_{In}\in D_1$ (see the illustration in Fig.\ref{part}).
     \item There exists an $f_p$-invariant set $Q\subseteq\overline{U_p}\setminus\rho$ s.t. $Q$ is a subset of the non-wandering set for the flow.
     \item There exists a factor map $\pi:Q\to\{1,2\}^\mathbf{N}$ s.t. $\pi\circ f_p=\sigma\circ\pi$. Additionally, both $f_p$ and $\pi$ are continuous on $Q$.
     \item If $s\in\{1,2\}^\mathbf{N}$ is periodic of minimal period $k>0$, then $\pi^{-1}(s)$ includes at least one periodic point $x_s$ for $f_p$. Moreover, the minimal period of $x_s$ w.r.t. $f_p$ is $k$. 
 \end{itemize}

In particular, at trefoil parameters the Rössler system has infinitely many periodic trajectories.
\end{theorem}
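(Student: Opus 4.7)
The plan is to reduce the flow near a trefoil parameter to a suspended Smale horseshoe, using the trefoil knot $\Lambda$ as the geometric skeleton. I would proceed in three stages: first, build the separating curve $\rho$ and the partition $D_1\sqcup D_2$ from the invariant surfaces; second, verify that $f_p$ admits a horseshoe rectangle whose image straddles $\rho$; third, extract the coding $\pi$ and the periodic-point statement from the resulting symbolic model.

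For the first stage, I would exploit the defining property of a trefoil parameter that $\overline{W^u_{In}}=\overline{W^s_{Out}}=\partial B_\alpha$ is an embedded topological sphere passing through $P_{In}$ and $P_{Out}$. Its intersection with the half-plane $\overline{U_p}$, with $P_{In}$ removed, is an arc which I take to be $\rho$; this arc genuinely separates $\overline{U_p}$ because, by the third bullet of Def.\ref{def32}, $\Theta$ transversely punctures $\overline{U_p}$ at the single point $P_0\in B_\alpha$. The component $D_1$ is declared to be the side of $\rho$ on which $P_{In}$ (and $P_0$) lie, i.e.\ the piece of $\overline{U_p}$ sitting inside $B_\alpha$, while $D_2$ is the outside piece. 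The two symbols will eventually encode whether the next loop of a trajectory winds inside $B_\alpha$ (label $1$) or outside (label $2$).

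For the second stage, I would combine a local Shilnikov analysis at the saddle-focus whose saddle index is less than $1$ (Assumption $3$) with the inclination lemma applied along $\Theta$. Shilnikov's construction yields a countable family of thin strips accumulating on $\rho$ on which $f_p$ is defined, hyperbolic, and strongly expanding in the direction transverse to $\rho$. Using the transverse intersection $\Theta\cap\overline{U_p}=\{P_0\}$ and following the global flow once around the trefoil, I would glue these local strips into a topological rectangle $R\subseteq\overline{U_p}$ whose image $f_p(R)$ intersects $R$ in two full horizontal sub-rectangles, one lying in $D_1$ and one in $D_2$. The maximal invariant subset $Q=\bigcap_{n\geq 0}f_p^{-n}(R)$ is then non-empty, lies in the non-wandering set of $F_p$, and by construction is disjoint from $\rho$.

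Defining $\pi:Q\to\{1,2\}^{\mathbf{N}}$ by $\pi(x)_n=i$ iff $f_p^n(x)\in D_i$ makes $\pi\circ f_p=\sigma\circ\pi$ tautological; continuity of $\pi$ and of $f_p$ on $Q$ follows from the transverse expansion at $\rho$ together with the avoidance $Q\cap\rho=\emptyset$. Given a periodic $s\in\{1,2\}^{\mathbf{N}}$ of minimal period $k$, the nested cylinder set of points in $R$ realising the first $k$ symbols of $s$ is a compact topological disk mapped into itself by $f_p^k$, and Brouwer produces a periodic point $x_s$; its minimal period is exactly $k$ because any smaller period would force the itinerary $\pi(x_s)=s$ to have a smaller period. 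The main obstacle — the step that genuinely requires the trefoil hypothesis and not merely the Shilnikov condition — is verifying the second stage globally: one must flow $R$ once around $\Lambda$ and show that its image returns across $\rho$ in the correct horseshoe configuration. This is where Th.\ref{th21} enters through the knot type of $\Lambda$ and the non-knottedness of $\Gamma_{In},\Gamma_{Out}$, which together force $f_p(R)$ to wrap around $\rho$ in exactly the required way; I expect this global knot-theoretic argument to occupy the bulk of the proof.
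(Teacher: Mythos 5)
This statement is not actually proven in the paper: it is imported wholesale from the author's earlier work (Th.~3.15 and Cor.~3.26 of \cite{I}), and the present text only records the one-line idea that the trefoil topology forces the first-return map to act as a \emph{topological} horseshoe (Fig.~\ref{fig16}: the image of a curve $\gamma$ crossing the section is a closed loop wrapping over both $D_1$ and $D_2$). Your global outline --- build $\rho$ from the invariant surfaces, follow a region once around the trefoil, read off a two-symbol coding --- is consistent with that sketch. As a proof, however, it has two genuine gaps.

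First, the hyperbolicity you extract from ``Shilnikov analysis'' is neither available nor, tellingly, needed. Assumption~3 only requires \emph{one} of the two saddle-foci to satisfy $\nu<1$; the relevant global object is a heteroclinic cycle (the one-dimensional connection $\Theta$ together with the coincident two-dimensional manifolds $W^u_{In}=W^s_{Out}$), not a homoclinic loop to a single saddle-focus, so the classical Shilnikov strip construction does not apply as stated; and nothing in the hypotheses controls the expansion of the global return along $\Theta$. This is exactly why the theorem asserts only a factor map $\pi$ (no injectivity) and only ``at least one'' periodic point over each periodic symbol: the conclusion is that of a topological horseshoe obtained from covering relations, and an argument that produces uniformly expanding invariant strips is claiming strictly more than the hypotheses support. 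Second, your periodic-point step is misapplied: the cylinder set of points realising the first $k$ symbols of $s$ is a strip that $f_p^k$ stretches \emph{across} itself, not a disk mapped \emph{into} itself, so Brouwer's theorem does not give a fixed point directly; one needs a fixed-point-index or covering-relation lemma (the Zgliczy\'nski-style machinery that \cite{I} in fact uses). Relatedly, continuity of $f_p$ and $\pi$ on $Q$ does not follow from ``transverse expansion at $\rho$'': the real sources of discontinuity of the return map are trajectories passing near $W^s_{In}$ or near the tangency line $l_p$ where the section fails to be transverse, and one must show $Q$ avoids these --- a point your construction does not address.
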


See the illustration in Fig.\ref{part}. The idea behind the proof of Th.\ref{th31} is that at trefoil parameters the topology of the heteroclinic trefoil knot forces the first-return map to behave essentially like a Smale Horseshoe (see the illustration in Fig.\ref{fig16}) - for more details, see Th.3.25 and Th.3.15 in \cite{I}. Consequentially, at trefoil parameters one can describe the dynamics of the first-return map on initial conditions in $\overline{U_p}\setminus\rho$ by using a symbolic coding (w.r.t. $D_1,D_2$ mentioned above).\\

\begin{figure}[h]
\centering
\begin{overpic}[width=0.4\textwidth]{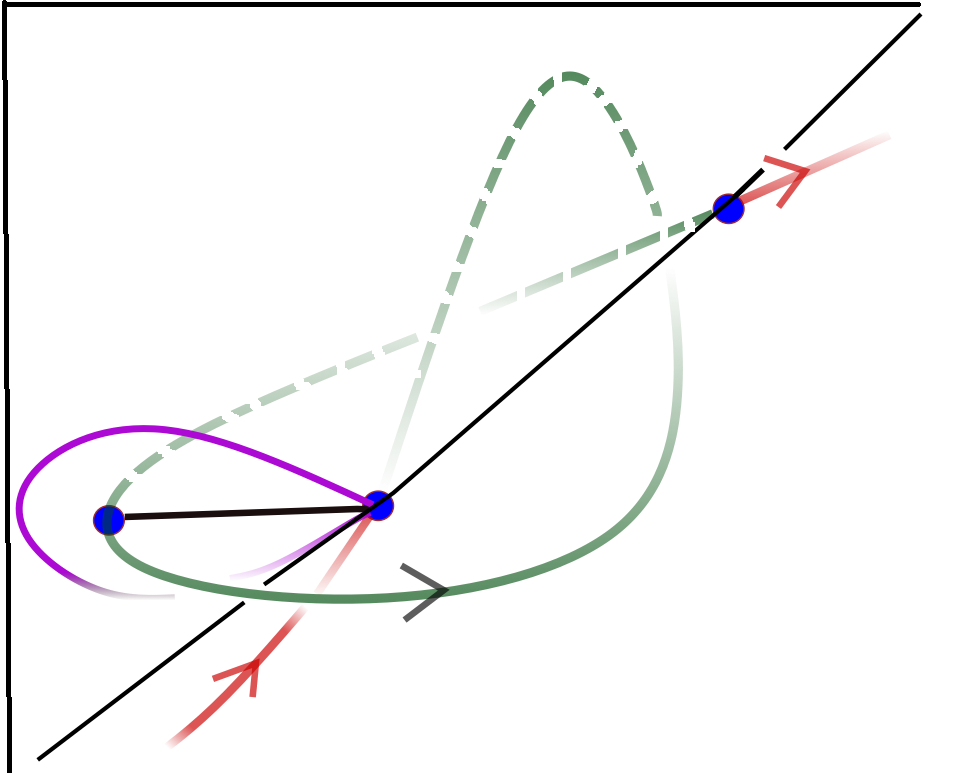}
\put(420,250){$P_{In}$}
\put(250,290){$\gamma$}
\put(60,300){$P_0$}
\put(100,390){$f_p(\gamma)$}
\put(800,550){$P_{Out}$}
\put(560,470){$\Theta$}
\end{overpic}
\caption[Suspending a curve along the trefoil knot.]{\textit{Flowing $\gamma$ along the trefoil. $\Theta$ is the green trajectory. Since $f_p(\gamma)$ is a closed loop, we expect the first-return map for the flow to behave similarly to a Smale Horseshoe.}}
\label{fig16}
\end{figure}

To continue, as stated at the introduction, in this paper we will match the flow dynamics with one-dimensional polynomial dynamics by studying their shared symbolic dynamics - and it is Th.\ref{th31} which allows us to do so. To begin, given a trefoil parameter $p\in P$, recall that by Cor.\ref{TR} when we deform the vector field $F_p$ to $F_v$ through the parameter space (for some $v\in P$) the cross-section $\overline{U_p}$ varies continuously to $\overline{U_v}$ (both are closed half-planes). As such, it follows the curve $\rho\subseteq\overline{U_p}$ is continuously deformed to some $\rho_v\subseteq \overline{U_v}$. Consequentially, since $\rho$ divides $\overline{U_p}$ to $D_1,D_2$, $\rho_v$ divides $\overline{U_v}$ to $D_{1,v},D_{2,v}$ (see the illustration in Fig.\ref{part}). Since by Th.\ref{th31} we have $P_{In}\in D_1$, it is easy to see $P_{In}\in D_{1,v}$.\\

Now, let $f_v:\overline{U_v}\to\overline{U_v}$ denote the first-return map for $F_v$ (wherever defined), and let $I_v$ denote the collection of initial conditions $x\in\overline{U_v}\setminus\rho_v$ satisfying the following:

\begin{itemize}
    \item $x$ is in the non-wandering set of $F_v$ - i.e., the trajectory of $x$ is not attracted to $\infty$ (see Def.\ref{nowandering}). 
    \item For every $k>0$, $f^k_v(x)$ is defined and lies in $\overline{U_v}\setminus\rho_v$.
\end{itemize}

By the discussion above it follows we can again define a symbolic coding  on $I_v$ - that is, there exists a function $\pi_v:I_v\to\{1,2\}^\mathbf{N}$ (not necessarily continuous) s.t. $\pi_v\circ f_v=\sigma\circ\pi_v$ (again, with $\sigma:\{1,2\}^\mathbf{N}\to\{1,2\}^\mathbf{N}$ denoting the one-sided shift). Moreover, let us note that since for all $v\in P$, $P_{In}\in I_v$, by $f_v(P_{In})=P_{In}$ (as $P_{In}$ is a fixed-point) it follows the constant $\{1,1,1...\}$ is in $I_v$ - i.e., for all $v\in P$, $I_v$ is never empty. We therefore summarize our results as follows:
\begin{corollary}
    \label{invariant} For any $v\in P$, the set $I_v$ is non-empty, and includes the fixed point $P_{In}$ - hence, $\{1,1,1,...\}\in\pi_v(I_v)$. Moreover, $I_v$ is a subset of the non-wandering set of the vector field $F_v$.
\end{corollary}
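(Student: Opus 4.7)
The plan is to verify each of the three clauses of the corollary in turn: that $P_{In}\in I_v$, that $I_v$ is nonempty with $\{1,1,1,\ldots\}\in\pi_v(I_v)$, and that $I_v$ is contained in the non-wandering set. The third assertion is immediate from the very definition of $I_v$, whose first bullet already requires $x$ to be non-wandering, so the substance of the corollary lies in checking that $P_{In}\in I_v$; the rest follows at once.

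To show $P_{In}\in I_v$, I would unpack both conditions in the definition of $I_v$ separately. First, $P_{In}$ is a fixed point of $F_v$, so its trajectory is just $\{P_{In}\}$, which is bounded and hence does not tend to $\infty$; by Def.\ref{nowandering}, $P_{In}$ lies in the non-wandering set of $F_v$. Second, I need $P_{In}\in\overline{U_v}\setminus\rho_v$ and $f_v^k(P_{In})\in\overline{U_v}\setminus\rho_v$ for every $k>0$. Since $P_{In}\in l_v\subset\overline{U_v}$, and the paragraph immediately preceding the corollary records $P_{In}\in D_{1,v}\subseteq\overline{U_v}\setminus\rho_v$, the first inclusion holds. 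For the second, since $P_{In}$ is a fixed point of the flow, the natural extension of the first-return map to fixed points on the cross-section gives $f_v(P_{In})=P_{In}$, so $f_v^k(P_{In})=P_{In}\in D_{1,v}$ for all $k$.

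Once $P_{In}\in I_v$ has been checked, $I_v$ is automatically nonempty. For the symbolic statement, recall that $\pi_v$ assigns symbol $i$ to the $k$-th iterate according to which of $D_{1,v}, D_{2,v}$ it occupies; since every forward iterate of $P_{In}$ is $P_{In}$ itself, and $P_{In}\in D_{1,v}$, we obtain $\pi_v(P_{In})=\{1,1,1,\ldots\}$ as required.

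There is really no hard obstacle here — the corollary is a summary statement gathering observations already made in the surrounding text. The one mildly delicate point is that $P_{In}\in D_{1,v}$ should hold for every $v\in P$, not only at trefoil parameters. This rests on the continuous deformation of $\rho$ to $\rho_v$ under variation of $v$ (established via Cor.\ref{TR} and the smooth dependence of $U_v$ on $v$), combined with the fact that $P_{In}$ is pinned at the origin in our coordinates and hence cannot cross $\rho_v$ during the deformation; so $P_{In}$ remains in the component $D_{1,v}$ that contained it at the trefoil parameter. Everything else is bookkeeping.
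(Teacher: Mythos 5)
Your proposal is correct and follows essentially the same route as the paper: the corollary is justified there by the discussion immediately preceding it, which observes that $P_{In}$ is a fixed point with $f_v(P_{In})=P_{In}$, lies in $D_{1,v}$ by the continuous deformation of $\rho$ to $\rho_v$, and hence codes to the constant sequence $\{1,1,1,\ldots\}$, while membership of $I_v$ in the non-wandering set is built into its definition. Your extra care about $P_{In}$ being pinned at the origin and therefore unable to cross $\rho_v$ during the deformation is a reasonable elaboration of the paper's ``it is easy to see $P_{In}\in D_{1,v}$''.
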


However, one can say more. As proven in Th.4.1 in \cite{I}, w.r.t. this coding we have the following result, with which we conclude this subsection:
 \begin{claim}\label{th41}
     Let $p\in P$ be a trefoil parameter, and let $s\in\{1,2\}^\mathbf{N}$ be periodic of minimal period $k$. Then, provided the parameter $v$ is sufficiently close to $p$, $\pi_v^{-1}(s)$ includes a periodic point $x_s$ for $f_v$, of minimal period $k$. Moreover, both $f_v,\pi_v$ are continuous at $x_s,f_v(x_s),...,f^{k-1}_v(x_s)$. Consequentially, for every $n>0$ there exists some $\epsilon>0$ s.t. whenever $||v-p||<\epsilon$, the Rössler system corresponding to $v$ generates at least $n$ distinct periodic trajectories.
 \end{claim}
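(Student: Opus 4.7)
The plan is to combine the symbolic dynamics provided by Theorem~\ref{th31} with the standard persistence theory for hyperbolic periodic orbits. At the trefoil parameter $p$, Theorem~\ref{th31} produces a periodic point $x_s \in Q \subseteq \overline{U_p} \setminus \rho$ for $f_p$ of minimal period $k$ with $\pi(x_s) = s$. The first step is to observe that the horseshoe-like behavior on $Q$ indicated in Fig.~\ref{fig16} and established in \cite{I} forces each such periodic point to be hyperbolic of saddle type for $f_p$: trajectories confined to $Q$ inherit definite exponential expansion in one direction and contraction in the other from the stretch-and-fold action of $f_p$ on $D_1 \cup D_2$. Equivalently, the corresponding closed trajectory of $F_p$ is a hyperbolic saddle periodic orbit of the flow.

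Next I would invoke continuous dependence of the first-return map on parameters. Because the finite orbit $\{f_p^j(x_s)\}_{j=0}^{k-1}$ lies in the open set $\overline{U_p} \setminus \rho$ and is bounded away from the one-dimensional stable manifold $W^s_{In}$ (no periodic orbit can lie on $W^s_{In}$), and because both $F_v$ and $\overline{U_v}$ vary smoothly with $v$, the first-return map $f_v$ is defined on a neighborhood of each $f_p^j(x_s)$ for $v$ sufficiently close to $p$, with $f_v \to f_p$ in the $C^1$-topology on those neighborhoods. Applying the implicit function theorem to the equation $f_v^k(x) = x$ at the hyperbolic fixed point $x_s$ of $f_p^k$ continues it to a unique nearby fixed point $x_s(v)$ of $f_v^k$, varying continuously in $v$. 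If $f_v^{k'}(x_s(v)) = x_s(v)$ held for some $k' < k$, passing to the limit $v \to p$ would give $f_p^{k'}(x_s) = x_s$, contradicting the minimality of $k$ for $x_s$; hence the minimal period of $x_s(v)$ under $f_v$ equals $k$.

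To identify $\pi_v(x_s(v))$ with $s$, write $s = (i_0, i_1, i_2, \ldots)$ and recall that $f_p^j(x_s) \in D_{i_j}$ for every $j$. Continuous dependence of the orbit on $v$ together with the continuous deformation of the components $D_{i,v}$ shows that $f_v^j(x_s(v)) \in D_{i_j,v}$ for every $j$, so $\pi_v(x_s(v))$ and $s$ agree on the first $k$ entries and, by periodicity, everywhere. Moreover, each $f_v^j(x_s(v))$ lies in the \emph{interior} of $D_{i_j,v}$, so every sufficiently nearby initial condition is assigned the same $j$-th symbol by $\pi_v$; intersecting these finitely many open conditions yields continuity of $\pi_v$ at each iterate, while continuity of $f_v$ there is immediate from smoothness of the return map on the interior of each $D_{i,v}$. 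The final consequence about $n$ distinct periodic trajectories then follows by taking the minimum over the finitely many periodic sequences in $\{1,2\}^\mathbf{N}$ of minimal period at most some bound $m(n)$ that supplies at least $n$ distinct orbits.

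The principal technical obstacle I anticipate is justifying the uniform $C^1$-control of $f_v$ on neighborhoods of the $f_p$-orbit of $x_s$: at the trefoil parameter $f_p$ is not globally defined on $\overline{U_p}$ (some initial conditions follow $\Theta$ toward $P_{In}$, others escape along $\Gamma_{In}$), and the heteroclinic connection $\Theta$ that organizes the horseshoe is destroyed by generic perturbations. One has to verify that the specific finite orbit segment $\{f_p^j(x_s)\}_{j=0}^{k-1}$ remains uniformly bounded away from both $\rho$ and $W^s_{In}$, so that nearby orbits of $F_v$ return transversely and smoothly to $\overline{U_v}$ for every $v$ in a neighborhood of $p$; this is precisely where the hyperbolicity of $x_s$ and the finiteness of $k$ enter decisively.
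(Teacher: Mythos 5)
First, note that the paper does not prove this statement at all: it is quoted verbatim as Theorem~4.1 of \cite{I}, so there is no in-paper argument to compare yours against. Judged on its own terms, your proposal has one genuine gap at its load-bearing step: the assertion that the periodic point $x_s$ produced by Th.~\ref{th31} is hyperbolic of saddle type. Nothing in Th.~\ref{th31} or in the surrounding discussion gives you this. The horseshoe at a trefoil parameter is a \emph{topological} horseshoe: the conclusion is only a factor map $\pi:Q\to\{1,2\}^{\mathbf{N}}$ with $\pi\circ f_p=\sigma\circ\pi$, and the phrasing ``$\pi^{-1}(s)$ includes \emph{at least one} periodic point'' signals that $\pi$ need not be injective, i.e.\ there is no conjugacy and no expansion/contraction estimate anywhere. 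Topological stretching of $D_1,D_2$ across one another does not imply differentiable hyperbolicity of the periodic orbits, so the implicit function theorem applied to $f_v^k(x)=x$ has no justified nondegeneracy hypothesis. The standard repair, and almost certainly what \cite{I} does, is to continue the orbit by a degree-theoretic argument: the existence proof via covering relations (or equivalently a nonzero fixed point index of $f_p^k$ on a small isolating neighborhood of $x_s$ inside $D_{i_0}$) is an open, $C^0$-robust condition, so for $v$ near $p$ the index remains nonzero and $f_v^k$ has a fixed point with the prescribed itinerary. This also explains why the admissible neighborhood of $p$ depends on $s$ (and on $n$ in the last clause): the horseshoe at $p$ is singular/degenerate, organized by the heteroclinic connection $\Theta$, and only finitely many covering relations survive a given perturbation --- exactly the behavior mirrored by $p_c$ for $c$ slightly above $-2$.

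Two smaller points. Your minimal-period argument via ``passing to the limit $v\to p$'' presumes the continued point converges to the specific point $x_s$, which the index continuation does not give you (nor does the IFT, absent hyperbolicity); but minimality follows more cheaply from the semiconjugacy: if $f_v^{k'}(x)=x$ and $\pi_v(x)=s$, then $\sigma^{k'}(s)=s$, forcing $k\mid k'$. And for continuity of $\pi_v$ at the orbit you control only the first $k$ symbols of nearby points; continuity in the product topology requires agreement on the first $N$ symbols for every $N$, which does follow from your interiority observation plus periodicity of the orbit, but should be said. The remainder of your outline --- transversal returns persisting for the finite orbit segment, identification of the itinerary via the continuous deformation $D_i\to D_{i,v}$, and the deduction of $n$ periodic trajectories by taking a minimum over finitely many symbol classes --- is sound once the continuation step is replaced by an index argument.
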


 \begin{figure}[h]
\centering
\begin{overpic}[width=0.6\textwidth]{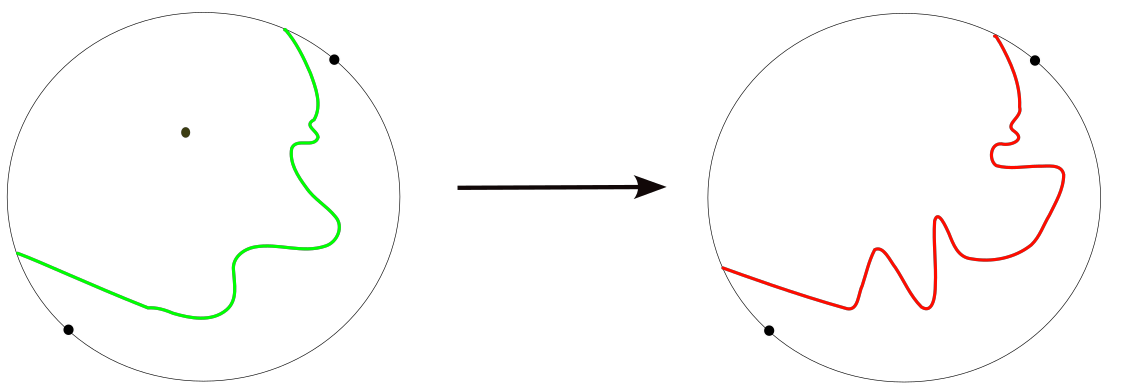}
\put(0,50){$P_{In}$}
\put(620,50){$P_{In}$}
\put(170,240){$P_0$}
\put(230,200){$\rho$}
\put(840,200){$\rho_v$}
\put(70,150){$D_2$}
\put(720,250){$D_{2,v}$}
\put(220,80){$D_1$}
\put(860,70){$D_{1,v}$}
\put(320,300){$P_{Out}$}
\put(930,300){$P_{Out}$}
\end{overpic}
\caption[Fig32]{the deformation of $U_p$ where $p$ is a trefoil parameter (on the left) to $U_v$ (on the right) - for simplicity, $U_p,U_v$ are drawn as discs rather than half-planes. As can be seen, the curve $\rho$ is deformed to $\rho_v$.}
\label{part}
\end{figure}
In other words, Th.\ref{th41} states that the closer a parameter $v$ is to a trefoil parameter, the more complex are the dynamics of the Rössler system corresponding to $v$ - i.e., as $v\to p$ the number of periodic trajectories for the flow increases. At Section 3 we will apply both Th.\ref{th31} and Th.\ref{th41} to reduce the dynamics of the Rössler system to those of quadratic polynomials on the real line - in order to do so we will also need several facts from the theory of one-dimensional dynamics, which brings us to the following subsection:

\subsection{Symbolic dynamics for the quadratic family.}
Recall that given any quadratic polynomial $p(x)=ax^2+bx+c$, $a,b,c\in\mathbf{R}$, $p$ can be conjugated to a normal form of the type $p_c(x)=x^2+c$ (for some $c\in\mathbf{R}$). Further recall that when $c\not\in[-2,\frac{1}{4}]$, for a generic $x\in\mathbf{R}$, we have $p^n_c(x)\to\infty$. However, when $c\in[-2,\frac{1}{4}]$, there is always a non-trivial closed, bounded interval $V_c\subseteq\mathbf{R}$, s.t. $p_c(V_c)\subseteq V_c$ - and for every $x\not\in V_c$, $p^n_c(x)\to\infty$. Moreover, writing $V_c=[x_{2,c},x_{1,c}]$ we always have $x_{2,c}<0<x_{1,c}$, i.e., the sequence $\{p^n_c(0)\}_n$ is bounded - that is, $p_c$ folds $V_c$ on itself in some way (see the illustration in Fig.\ref{logistic}). For a proof of these results, see, for example, Ch.1.10 in \cite{GN}.\\

With these ideas in mind we define the \textbf{Quadratic Family} by $p_c(x)=x^2-c,c\in[-2,\frac{1}{4}]$. Now, let $x_0\in V_c$ be some periodic point for $p_c$ of minimal period $k$ (for some $c\in[-2,\frac{1}{2}]$). Following the terminology of \cite{DeMVS} and \cite{CG}, we classify its type as follows:
\begin{itemize}
    \item When $0<|\frac{d p^k(x_0)}{dx}|<1$, we say $x_0$ is a \textbf{hyperbolic} or \textbf{attracting} (we use these two terms interchangeably) - in that case, there exists some open interval $V_0\subseteq V_c$ s.t. for $x\in V_0$, $\lim_{n\to\infty}p^{kn}_c(x)=x_0$. 
    \item $x_0$ is  \textbf{super-attracting} if $\frac{dp^k(x_0)}{dx}=0$ - similarly, there exists an open interval $V_0\subseteq  V_c$ s.t. for $x\in V_0$, $\lim_{n\to\infty}p^{kn}_c(x)=x_0$.
    \item When $|\frac{dp^k_c(x_0)}{dx}|=1$, we say $x_0$ is \textbf{weakly attracting} (or \textbf{parabolic}). In that case, there exists an open interval $V_0\subseteq V_c$, $x_0\in\partial V_0$ s.t. $\lim_{n\to\infty}p^{kn}_c(x)=x_0$. 
    \item Finally, we say $x_0$ is \textbf{repelling} if $|\frac{dp^k_c(x_0)}{dx}|>1$. By Th.2.2 in \cite{CG}, for every $c\in[-2,\frac{1}{4}]$ $p_c$ has at most one periodic point $x_0\in V_c$ which is not repelling.
\end{itemize}

\begin{figure}[h]
\centering
\begin{overpic}[width=0.3\textwidth]{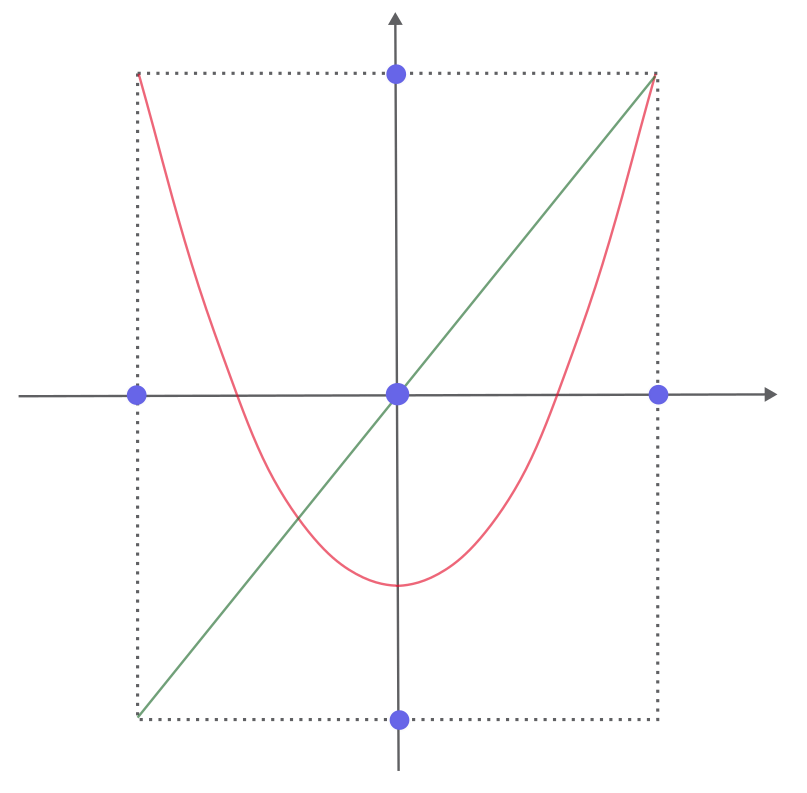}
\put(500,750){$\{y=x\}$}
\put(580,240){$p_c(I_c)$}
\put(70,435){$x_{2,c}$}
\put(820,435){$x_{1,c}$}
\put(475,990){$y$}
\put(970,485){$x$}
\put(500,50){$x_{2,c}$}
\put(500,435){$0$}
\put(500,910){$x_{1,c}$}

\end{overpic}
\caption[Fig31]{The invariant interval $I_c=[x_{2,c},x_{1,c}]$, for some $-2<c<0$. $p_c$ has one fixed point in $[x_{2,c},0)$ and another at $x_{1,c}$. }
\label{logistic}
\end{figure}

Now, recall $\sigma:\{1,2\}^\mathbf{N}\to\{1,2\}^\mathbf{N}$ denotes the one-sided shift, and that given any $c\in[-2,\frac{1}{4}]$ the critical point $0$ is strictly interior to $V_c$. Therefore, we can consider the sub-intervals $(0,x_{1,c}]=I_1$, $[x_{2,c},0)=I_2$ and define a symbolic coding for initial conditions in the maximal invariant set of $p_c$ in $I_c\setminus\{0\}$, denoted by $I_c$ - that is, there exists a continuous map $\xi_c:I_c\to\{1,2\}^\mathbf{N}$ s.t. $\xi_c\circ p_c=\sigma\circ\xi_c$, s.t. $\xi_c(x)=\{i_n(p^n_c(x))\}_{n\geq0}$, where $i_n(y)=1$ when $y\in I_1$, and $2$ where $y\in I_2$. For example, with previous notations, for every $c\in[-2,\frac{1}{4}]$, $\xi_c(x_{1,c})$ is the constant $\{1,1,1...\}$ - while $\xi_c(x_{2,c})$ is the pre-periodic symbol $\{2,1,1,...\}$.
Following Ch.II.3 in \cite{DeMVS}, we extend $\xi_c$ to $0$ as follows - for $c\in[-2,\frac{1}{4}]$, define the \textbf{Kneading Invariant} of $p_c$ by $K(c)=\{i^+_n(p^n_c(0))\}_n$, where $i^+_n(x)=\lim_{y\uparrow x}i_n(y)$. See the illustration in Fig.\ref{partition}.\\

\begin{figure}[h]
\centering
\begin{overpic}[width=0.8\textwidth]{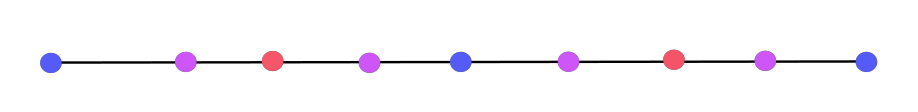}
\put(170,0){$p^{-2}_c(0)$}
\put(270,0){$p^{-1}_c(0)$}
\put(380,0){$p^{-2}_c(0)$}
\put(590,0){$p^{-2}_c(0)$}
\put(710,0){$p^{-1}_{c}(0)$}
\put(810,0){$p^{-2}_{c}(0)$}
\put(495,0){$0$}
\put(935,0){$x_{1,c}$}
\put(20,0){$x_{2,c}$}
\end{overpic}
\caption[Fig31]{The invariant interval $I_c=[x_{2,c},x_{1,c}]$, divided to different components by $\cup_{n\geq0}p^{-n}_c(0)$. }
\label{partition}
\end{figure}

Given any $c\in[-2,\frac{1}{4}]$, the kneading invariant determines which symbols can (and cannot) appear in $\xi_c(I_c)$. In fact, using Kneading Theory one can prove that given $c\in[-2,\frac{1}{4}]$ s.t. $\xi_c(I_c)$ includes every periodic $s\in\{1,2\}^\mathbf{N}$, then $c=-2$ - see Th.\ref{maximal} below. We conclude this section the following result, which will be useful in the proof of Th.\ref{polytheorem} (for a proof, see Th.II.3.2 and Cor.II.10.1 in \cite{DeMVS}):
\begin{claim}
    \label{maximal}
   With previous notations, given $c\in[-2,\frac{1}{4}]$, provided $c>-2$, there exists some periodic $s\in\{1,2\}^\mathbf{N}$ s.t. $s\not\in\xi_c(I_c)$. Moreover, given $-2\leq d\leq c$ we have $\xi_c(I_c)\subseteq \xi_d(I_d)$ - that is, the dynamical complexity can only increase as $c\downarrow -2$. In particular, $p_{-2}$ is dynamically maximal, i.e., $\xi_{-2}(I_{-2})$ includes every periodic $s\in\{1,2\}^\mathbf{N}$.
\end{claim}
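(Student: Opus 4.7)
The plan is to prove the three assertions in reverse order, using the standard apparatus of kneading theory developed in \cite{DeMVS}. First, I would equip $\{1,2\}^{\mathbf{N}}$ with the parity (or ``twisted'') lexicographic order $\preceq$: two sequences $s,t$ are compared position-by-position, but at position $n$ the roles of $1$ and $2$ are swapped whenever the number of $2$'s appearing in positions $0,\dots,n-1$ is odd. Since $p_c$ is orientation-preserving on $I_1=(0,x_{1,c}]$ and orientation-reversing on $I_2=[x_{2,c},0)$, a direct check shows $\xi_c:I_c\to\{1,2\}^{\mathbf{N}}$ is monotone with respect to $\preceq$ and the natural order on $I_c$. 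The Milnor--Thurston admissibility criterion then reads: a sequence $s$ lies in the closure of $\xi_c(I_c)$ if and only if $\sigma^n(s)\preceq K(c)$ for every $n\geq 0$. This reduces the whole theorem to a statement about the relative position of kneading invariants.

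For the third (maximality) claim, I would invoke the classical cosine semi-conjugacy. Setting $x=2\cos(\pi\theta)$ for $\theta\in[0,1]$, a direct computation gives
\begin{equation*}
p_{-2}(2\cos(\pi\theta))=4\cos^2(\pi\theta)-2=2\cos(2\pi\theta),
\end{equation*}
so $p_{-2}$ on $V_{-2}=[-2,2]$ is semi-conjugate, via $h(\theta)=2\cos(\pi\theta)$, to the full tent map $T(\theta)=1-|1-2\theta|$ on $[0,1]$. The tent map $T$ is well known to realize every periodic symbolic sequence in $\{1,2\}^{\mathbf{N}}$ with respect to the partition $[0,1/2)\cup(1/2,1]$, because $T$ is conjugate on an invariant Cantor set to the full one-sided shift on two symbols. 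Transporting this through $h$ (which matches the two monotone laps of the tent map with $I_1,I_2$ of $p_{-2}$, up to orientation), one obtains that every periodic $s\in\{1,2\}^{\mathbf{N}}$ is realized by a periodic point of $p_{-2}$ in $I_{-2}$; equivalently $K(-2)$ is the maximal element of $\{1,2\}^{\mathbf{N}}$ in the order $\preceq$.

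For the second (monotonicity) claim, I would show that the map $c\mapsto K(c)$ is monotone (non-increasing in $c$) in the order $\preceq$. This follows by tracking the itinerary of $p_c(0)=c$: as $c$ decreases from $1/4$ to $-2$, the critical value slides leftward in the symbolic order $\preceq$, and each iterate $p_c^n(0)$ varies continuously with $c$ except when it crosses $0$, at which point the kneading symbol at position $n$ changes in the direction that makes $K$ larger. Once $K(c)\succeq K(c')$ for $c\leq c'$ is established, the admissibility criterion immediately yields $\xi_{c'}(I_{c'})\subseteq \xi_c(I_c)$. The first claim then reduces to showing that for any $c>-2$ the inclusion $K(c)\prec K(-2)$ is strict, so that some periodic shift $\sigma^n(s)\succ K(c)$, making $s$ inadmissible for $p_c$; this strictness follows from the fact that only $c=-2$ realizes the critical value at the endpoint $x_{2,-2}$, so for $c>-2$ the orbit $\{p_c^n(0)\}$ avoids a neighbourhood of some point $p_{-2}^{n_0}(0)$, forcing a strict inequality at position $n_0$ of the kneading sequence.

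The main obstacle is the careful book-keeping around the critical point: the kneading invariant is defined via one-sided limits $i_n^+$, so proving strict monotonicity of $c\mapsto K(c)$ and the strictness $K(c)\prec K(-2)$ requires ruling out accidental coincidences on the countable set of parameters where a forward iterate of $0$ lands exactly on $0$ (the ``tuning'' / super-attracting parameters). This is handled exactly as in Ch.~II.3 of \cite{DeMVS}, by combining the intermediate value theorem applied to $c\mapsto p_c^n(0)$ with the density of parameters at which the critical orbit is eventually periodic; I would quote these preparatory lemmas rather than reprove them.
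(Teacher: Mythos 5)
The paper does not actually prove this statement: it is quoted directly from the literature (Th.~II.3.2 and Cor.~II.10.1 of \cite{DeMVS}), so your proposal --- which unpacks the Milnor--Thurston admissibility criterion, the parity-lexicographic order, and the Chebyshev conjugacy $p_{-2}(2\cos\pi\theta)=2\cos 2\pi\theta$ --- is essentially an exposition of the content of the results the paper cites, and its overall architecture is sound. One caution: the step you describe as ``the critical value slides leftward in the symbolic order, and each symbol changes in the direction that makes $K$ larger'' is not a proof but a restatement of the monotonicity of the kneading invariant for the quadratic family, which is the genuinely deep ingredient here; the naive intermediate-value/crossing argument does not by itself exclude the kneading sequence oscillating as $c$ varies, and the known proofs (Milnor--Thurston completed by Douady--Hubbard--Sullivan, or Tsujii) require complex-analytic or otherwise substantial machinery. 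Since you explicitly defer this to Ch.~II of \cite{DeMVS} --- the same source the paper leans on --- your write-up is acceptable, but you should present the monotonicity of $c\mapsto K(c)$ as a quoted theorem rather than as something that ``follows by tracking the itinerary.'' The remaining details (the full tent map realizing every periodic itinerary, and the strictness $K(c)\prec K(-2)$ for $c>-2$ because $p_c(0)=x_{2,c}$ forces $c\in\{0,-2\}$) are correct as sketched.
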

\begin{remark}
    \label{notemp}
    Given any $c\in [-2,\frac{1}{4}]$, it is easy to see the constant $\{1,1,1...\}$ is always in $I_c$. 
\end{remark}
Th.\ref{maximal} essentially states that given $c\in[-2,\frac{1}{4}]$, $p_c$ covers $V_c$ twice precisely when $c=-2$ - as such, given $c\in[-2,\frac{1}{4}]$, Th.\ref{maximal} proves that the dynamics of $p_{c}$ are essentially a singular Smale Horseshoe precisely when $c=-2$ (we will return to this idea in Section 3 - see Cor.\ref{misiu}). Capitalizing on this idea, let $\kappa\subseteq\{1,2\}^\mathbf{N}$ denote the set of symbols which are not strictly pre-periodic to the constant $\{1,1,1,...\}$. By computation, the kneading invariant for $c=-2$ is the pre-periodic symbol $\{2,1,1,...\}$ (see the illustration in Fig.\ref{p-2}). Now, recall we denote by $\sigma:\{1,2\}^\mathbf{N}\to\{1,2\}^\mathbf{N}$ the one-sided shift - then, the dynamics of $p_{-2}$ on $I_{-2}$ satisfy the following (see Ch.2.6 in \cite{GL}):

\begin{lemma}
    \label{polyhorse}
    The invariant interval $V_{-2}$ for $p_{-2}$ is $[-2,2]$, and $p_{-2}$ is a (branched) double cover of $[-2,2]$ on itself (see the illustration in Fig.\ref{p-2}). Moreover, $\xi_{-2}:I_{-2}\to\kappa$ is a homeomorphism, satisfying $\xi_{-2}\circ p_{-2}\circ\xi^{-1}_{-2}=\sigma$ - consequentially, every component of $I_{-2}$ is a singleton.
\end{lemma}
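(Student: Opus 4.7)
The plan is to use the classical Chebyshev semi-conjugacy to reduce $p_{-2}(x)=x^2-2$ to the symmetric tent map on $[0,1]$, and then read off the shift structure from the well-known symbolic dynamics of the tent map.

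First I would verify that $V_{-2}=[-2,2]$. From $p_{-2}(\pm 2)=2$, $p_{-2}(0)=-2$, and the monotonicity of $p_{-2}$ on each of $[-2,0]$ and $[0,2]$, one gets $p_{-2}([-2,2])=[-2,2]$; for $|x|>2$, the inequality $p_{-2}(x)-x=x^2-x-2>0$ forces $p_{-2}^n(x)\to\infty$, so $[-2,2]$ is maximal. The symmetry $p_{-2}(x)=p_{-2}(-x)$ gives the branched double cover assertion, with the critical point $0$ as the unique ramification point.

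Next I would introduce the Chebyshev change of coordinates $h\colon[0,1]\to[-2,2]$, $h(t):=2\cos(\pi t)$, which is an orientation-reversing homeomorphism. The trigonometric identity
\[
p_{-2}(h(t)) \;=\; 4\cos^2(\pi t)-2 \;=\; 2\cos(2\pi t) \;=\; h(T(t)),
\]
where $T(t):=1-|1-2t|$ is the symmetric tent map on $[0,1]$, provides a topological conjugacy between $(p_{-2},[-2,2])$ and $(T,[0,1])$. Moreover, because $h$ is decreasing with $h([0,1/2])=[0,2]$ and $h([1/2,1])=[-2,0]$, it carries the natural two-piece partition of the tent domain onto $I_1$ and $I_2$, matching the symbol labels.

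The tent map is in turn well known to be conjugate, via binary expansion, to the one-sided shift $\sigma$ on $\{1,2\}^\mathbf{N}$: a point $t$ whose forward orbit avoids the critical point $1/2$ is coded by the sequence recording which half of $[0,1]$ its iterates lie in, and this coding is a homeomorphism between its domain (the complement of the eventual $T$-preimages of $1/2$) and the set of sequences not strictly pre-periodic to $\{1,1,1,\ldots\}$. Via $h$, the eventual $T$-preimages of $1/2$ correspond exactly to the eventual $p_{-2}$-preimages of the critical point $0$, i.e., to the countable set removed when forming $I_{-2}$, and the excluded sequences are exactly those strictly pre-periodic to $\{1,1,1,\ldots\}$, as dictated by the kneading invariant $K(-2)=\{2,1,1,\ldots\}$ coding the critical orbit $0\mapsto -2\mapsto 2\mapsto 2\mapsto\cdots$. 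Pulling everything back through $h$ yields the desired homeomorphism $\xi_{-2}:I_{-2}\to\kappa$ with $\xi_{-2}\circ p_{-2}=\sigma\circ\xi_{-2}$, and the singleton-component claim is just the injectivity of $\xi_{-2}$: if $x_1\neq x_2$ lie in $I_{-2}$, some iterate separates them across $0$, inserting a removed preimage between them.

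The main technical point (and expected principal obstacle) is precisely this bookkeeping of the countable exceptional sets on both sides: matching the eventual $p_{-2}$-preimages of $0$ in $[-2,2]$ with the sequences strictly pre-periodic to $\{1,1,1,\ldots\}$ in $\{1,2\}^\mathbf{N}$. This reduces, via the Chebyshev conjugacy, to the classical correspondence between dyadic rationals in $[0,1]$ and eventually constant binary expansions, and is made precise by the explicit form of $K(-2)$.
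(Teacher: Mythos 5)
Your proposal is correct in substance, but it cannot be "the same approach as the paper" for the simple reason that the paper does not prove Lemma \ref{polyhorse} at all: it is stated with a pointer to Ch.~2.6 of \cite{GL}. The Chebyshev change of variables $h(t)=2\cos(\pi t)$, conjugating $x^2-2$ to the tent map and then reading the itinerary structure off the dyadic combinatorics, is exactly the classical argument that reference encapsulates, so what you have written is the standard proof the paper delegates to the literature. Two points deserve tightening. First, your injectivity/singleton argument ("some iterate separates them across $0$") needs the intermediate observation that if $n$ is minimal with $0\in p_{-2}^n\bigl((x_1,x_2)\bigr)$, then $p_{-2}^n$ is monotone on $[x_1,x_2]$, so the nonzero images of $x_1,x_2$ genuinely land on opposite sides of $0$; likewise continuity of $\xi_{-2}^{-1}$ should be attributed to the shrinking of cylinder lengths (which the tent-map picture gives for free, cylinders having length $2^{-n}$), since $I_{-2}$ is not compact and one cannot invoke compactness to invert a continuous bijection. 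Second, the "bookkeeping of exceptional sequences" that you correctly identify as the main technical point is slightly off in the same way the paper's own statement is: the point $-2$ belongs to $I_{-2}$ (its orbit $-2\mapsto 2\mapsto 2\mapsto\cdots$ never meets the critical point) and has itinerary $\{2,1,1,\ldots\}$, which \emph{is} strictly pre-periodic to $\{1,1,1,\ldots\}$ and hence lies outside $\kappa$ as defined. The sequences genuinely absent from $\xi_{-2}(I_{-2})$ are those reaching the constant sequence only after at least two shifts, because the sole $p_{-2}$-preimage of $-2$ is the excluded critical point $0$ (equivalently, the sole $T$-preimage of $1$ is $1/2$). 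This is an endpoint artifact affecting a single orbit, not a flaw in your method, but a careful write-up should either adjust the definition of $\kappa$ or remove the orbit of $-2$; as it stands, asserting that the excluded sequences are "exactly those strictly pre-periodic to $\{1,1,1,\ldots\}$" reproduces the paper's imprecision rather than proving the stated bijection.
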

\begin{figure}[h]
\centering
\begin{overpic}[width=0.3\textwidth]{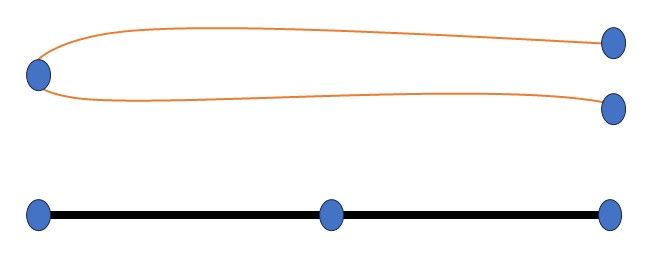}
\put(10,-30){$-2$}
\put(900,-30){$2$}
\put(-170,250){$p_{-2}(0)$}
\put(950,310){$p_{-2}(-2)$}

\put(950,210){$p_{-2}(2)$}

\put(485,-30){$0$}

\end{overpic}
\caption[Fig31]{$p_{-2}$ acts on $[-2,2]$ by folding it twice on itself.}
\label{p-2}
\end{figure}
As we will prove in Section $3$, the dynamics of the Rössler system at trefoil parameters are essentially those of $p_{-2}$ on $I_{-2}$ (see Cor.\ref{misiu}) - however, in order to generalize this result to every parameter $v\in P$, not necessarily a trefoil parameter, we will also need the notion of the real Fatou and Julia sets. Following Ch.VI in \cite{DeMVS}, we define these sets as follows:

\begin{definition}
\label{julia}    Let $g:I\to I$ be a continuous interval map. Then, the \textbf{Fatou set} of $g$, $F(g)$, is defined as the set of points around which the sequence $\{g^n\}_n$ is compact (in particular, every component of $F(g)$ is open in $I$). Conversely, we define $J(g)$, the \textbf{Julia set} of $g$, to be $I\setminus F(g)$. 
\end{definition}
Now, let us consider how these ideas manifest for polynomials $p_c(x)=x^2+c$. By Lemma VI.1 in \cite{DeMVS} we know that the real Julia set of $p_c$ at the invariant interval $V_c$, $J(p_c)$, is simply the $\alpha-$limit set of $0$. Additionally, let us recall that by Th.III.2.2 and Th.III.2.3 in \cite{CG} we have the following result:

\begin{claim}
\label{classification} For $c\in[-2,\frac{1}{4}]$, $p_c$ has at most one periodic orbit in $V_d$ that is either attracting or weakly attracting. Consequentially, given $O_1,...,O_n\subseteq V_c$, distinct periodic orbits for $p_c$, $O_1,...,O_{n-1}$ are repelling and lie at the Julia set.
\end{claim}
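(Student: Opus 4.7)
The plan is to prove the uniqueness part via Singer's theorem and then deduce the second assertion as a direct counting consequence, finishing by identifying repelling periodic points with Julia set points using non-equicontinuity.

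First I would verify that $p_c(x)=x^2+c$ has negative Schwarzian derivative on $\mathbf{R}\setminus\{0\}$: a direct computation gives $Sp_c(x)=-\tfrac{3}{2x^2}<0$, which is the standard hypothesis of Singer's theorem (see Ch.II.6 of \cite{DeMVS}). Since $p_c$ has a unique critical point at $0$, Singer's theorem says that the immediate basin of every attracting or weakly attracting periodic orbit of $p_c|_{V_c}$ must contain the critical point $0$ (for a parabolic orbit, $0$ lies in the immediate parabolic basin via a petal). Because $0$ has a single forward orbit, the immediate basins of distinct non-repelling periodic orbits would have to share this critical orbit, which is impossible as immediate basins of distinct cycles are disjoint. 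Hence $p_c$ carries at most one periodic orbit in $V_c$ that is attracting or weakly attracting.

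Next, the counting consequence is immediate: if $O_1,\ldots,O_n\subseteq V_c$ are $n$ distinct periodic orbits, then at most one of them — say $O_n$ after relabeling — can fail to be repelling, so the remaining $O_1,\ldots,O_{n-1}$ are all repelling in the sense that $\bigl|\tfrac{dp_c^{k_i}}{dx}(x)\bigr|>1$ for any $x\in O_i$, where $k_i$ is the minimal period of $O_i$.

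Finally, I would show that every repelling periodic point $x_0\in O_i$ belongs to $J(p_c)$. By definition of the Julia set (Def.\ref{julia}), it suffices to show that $\{p_c^n\}_{n\geq 0}$ fails to be equicontinuous on any neighborhood of $x_0$ in $V_c$. Let $k=k_i$ and $\lambda=\tfrac{dp_c^{k}}{dx}(x_0)$ with $|\lambda|>1$. By the chain rule and a Taylor expansion around $x_0$, for $y$ sufficiently close to $x_0$ we have $|p_c^{kn}(y)-x_0|\geq \tfrac{1}{2}|\lambda|^n|y-x_0|$, so the iterates cannot remain uniformly bounded in a fixed small neighborhood of $x_0$, contradicting local equicontinuity. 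Thus $x_0\notin F(p_c)$, so $x_0\in J(p_c)$, completing the argument.

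The only subtle point is the Singer step, since one must be careful about the parabolic case: a weakly attracting cycle need not have an open neighborhood in its basin, but its immediate parabolic basin is nevertheless an open set along which iterates converge, and Singer's argument still forces a critical point into its closure. Beyond this, the rest is bookkeeping and a standard expansion estimate.
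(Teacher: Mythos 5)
Your proof is correct in substance, but it takes a genuinely different route from the paper: the paper does not prove this claim at all, it simply recalls it from Th.~III.2.2 and Th.~III.2.3 of \cite{CG}, i.e.\ from the complex-analytic facts that every attracting cycle and every parabolic cycle of a polynomial attracts a critical orbit -- since $p_c$ has the single finite critical point $0$, at most one non-repelling cycle can exist, and the real multiplier forces any neutral real cycle to be parabolic. You instead argue entirely on the real line via Singer's theorem (negative Schwarzian, one critical point, disjointness of immediate basins) and then prove Julia-set membership of repelling cycles directly from the expansion estimate and the definition of the Fatou set in Def.~\ref{julia}. This buys a self-contained real-variable proof that never leaves $\mathbf{R}$, at the cost of having to handle the interval-boundary alternative in Singer's theorem, which the complex route avoids.

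That boundary alternative is the one place where your write-up has a gap: Singer's theorem does \emph{not} say the immediate basin of a non-repelling cycle of $p_c|_{V_c}$ contains the critical point; it says it contains the critical point \emph{or a boundary point of $V_c$}. You must rule out the second option. This is easy but should be said: both endpoints of $V_c=[-x_{1,c},x_{1,c}]$ map to the fixed point $x_{1,c}$, whose multiplier is $1+\sqrt{1-4c}\geq 1$; for $c<\frac{1}{4}$ it is repelling, so an endpoint cannot lie in the basin of an attracting or one-sidedly attracting cycle, and the critical-point alternative must hold; for $c=\frac{1}{4}$ the parabolic fixed point $\frac{1}{2}$ is itself the unique non-repelling cycle, so the conclusion holds trivially. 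Relatedly, your closing remark that Singer ``forces a critical point into its closure'' of the parabolic immediate basin is too weak for your disjointness argument -- two distinct cycles could in principle share a boundary point of their basins. What you need (and what Singer with $Sp_c<0$ actually gives, once the boundary case is excluded) is that $0$ lies \emph{inside} the one-sided immediate basin, hence its forward orbit converges to that cycle and to no other. With these two points patched, the argument is complete; the final expansion estimate for Julia-set membership should be stated as holding only while the iterates remain in the linearization neighborhood, which is exactly what produces the failure of equicontinuity.
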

We conclude this Section with the following fact, which is a corollary of Th.\ref{classification} and previous discussion about the symbolic dynamics of the quadratic family:
\begin{corollary}
\label{single}    For $c\in[-2,\frac{1}{4}]$, let $J(p_c)$ denote the Julia set of $p_c$, and let $F(p_c)$ denote the Fatou set. Then, given a repelling periodic point $x\in J(p_c)$ s.t. $x$ does not lie on the boundary of a component in $F(p_d)$, setting $s=\xi_c(x)$, we have $\xi^{-1}_c(s)=\{x\}$. 
\end{corollary}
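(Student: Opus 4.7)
The plan is to argue by contradiction: assuming $L := \xi_c^{-1}(s)$ strictly contains $\{x\}$, I will show $x$ must lie on the boundary of a Fatou component, contradicting the hypothesis. This reduces the problem to understanding the structure of $L$ as an interval invariant under a monotone iterate of $p_c$.

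First, I would verify that $L$ is a closed interval. Because $p_c$ restricted to either $I_1 = (0, x_{1,c}]$ or $I_2 = [x_{2,c}, 0)$ is a monotone homeomorphism onto its image, the $n$-th cylinder set $L^{(n)} := \{y \in V_c : i_m(p_c^m(y)) = s_m \text{ for every } 0 \leq m \leq n\}$ is a connected component of $V_c \setminus \bigcup_{j=0}^{n} p_c^{-j}(0)$, hence an interval; taking closures and intersecting, $L = \bigcap_n \overline{L^{(n)}}$ is a closed interval $[a,b]$ containing $x$. Let $k$ denote the minimal period of $x$. Combining $p_c^k(x)=x$ with the semi-conjugacy $\xi_c \circ p_c = \sigma \circ \xi_c$ gives $\sigma^k(s) = s$, so $g := p_c^k|_L$ maps $L$ into $L$; and as a composition of monotone branches, $g$ is monotone. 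Replacing $g$ by $g^2$ if necessary (note $x$ remains a repelling fixed point, since $(g^2)'(x) = (g'(x))^2 > 1$), I may assume $g$ is monotone increasing.

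The core step is to show that if $a < b$, then $x$ lies on the boundary of a Fatou component. Let $\mathrm{Fix}(g)$ denote the (closed) fixed-point set of $g$ in $[a,b]$; on each complementary open interval $(u,v) \subseteq [a,b]\setminus \mathrm{Fix}(g)$, the sign of $g - \mathrm{id}$ is constant, so the orbits of $g$ on $(u,v)$ are monotone and bounded, converging pointwise to one of the endpoint fixed points. A standard Arzel\`a--Ascoli argument using uniform boundedness then yields equicontinuity of $\{g^n\}_n$ on $(u,v)$, so $(u,v) \subseteq F(p_c)$. Since $g$ is monotone increasing with $g'(x) > 1$, $g - \mathrm{id}$ changes sign at $x$, placing $x$ as an endpoint of at least one complementary interval $(u,v) \subseteq F(p_c)$, and hence on the boundary of the Fatou component containing $(u,v)$. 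This contradicts the hypothesis, forcing $a=b$ and $L = \{x\}$.

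The step I expect to be the main obstacle is the transition from monotonicity and boundedness of $\{g^n\}$ on $(u,v)$ to equicontinuity there (and, in turn, to membership in $F(p_c)$). The equivalence between normality of $\{g^n\}$ and of $\{p_c^n\}$ on an open set follows from the finite shuffling $p_c^n = g^{\lfloor n/k \rfloor} \circ p_c^{n \bmod k}$, which preserves equicontinuity, but this equivalence needs to be written out carefully. Some additional care is also required in handling the endpoints $a, b$ of $L$ (which need only satisfy $g(a), g(b) \in [a,b]$ and may fail to be fixed by $g$) and, in the originally monotone-decreasing case, translating the analysis of $g^2$ back to conclusions about the Fatou/Julia decomposition of $p_c$ itself.
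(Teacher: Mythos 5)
Your proposal is correct, but it takes a genuinely different route from the paper. Both arguments share the same skeleton: show $\xi_c^{-1}(s)$ is (the convex hull of) a closed interval $L=[a,b]$ via nested cylinder intervals, and then derive a contradiction by showing that if $a<b$ the repelling point $x$ would sit on the boundary of a Fatou component. Where you diverge is in how the interior of $L$ is placed inside $F(p_c)$. The paper does this in one line by invoking the characterization of the real Julia set as the $\alpha$-limit set of the critical point (Lemma VI.1 in de Melo--van Strien): since the interior of $L$ contains no preimage of $0$ (all points of $L$ share the itinerary $s$), it is disjoint from $J(p_c)$, hence its interior lies in the Fatou set, and $x\in J(p_c)$ is forced onto $\partial L$. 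You instead prove normality of the iterates directly, using that $g=p_c^k|_L$ is a monotone self-map of $L$, so bounded monotone orbits converge to fixed points, giving locally uniform convergence of $\{g^m\}$ on the complementary intervals of $\mathrm{Fix}(g)$ and hence normality of $\{p_c^n\}$ there via the decomposition $p_c^n=p_c^{\,n\bmod k}\circ g^{\lfloor n/k\rfloor}$; the repelling condition $|g'(x)|>1$ then guarantees such an interval abuts $x$. Your version is more self-contained -- it uses only the definition of the Fatou set plus elementary interval dynamics, whereas the paper leans on a cited structural theorem -- at the cost of the extra bookkeeping you already flag (passing to $g^2$, the non-fixed endpoints $a,b$, and transferring normality from $g$ to $p_c$). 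One small point worth making explicit when you write it up: for the normality transfer it is the factorization with $g^{\lfloor n/k\rfloor}$ acting \emph{first} on $(u,v)$ that you need, so that the locally uniform limit is taken inside $L$ before the finitely many correction maps $p_c^r$, $0\le r<k$, are applied.
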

\begin{proof}
    Let us write $I_s=\xi^{-1}_c(s)$. It is easy to see $I_s$ is generated by the infinite intersection of nested intervals - hence, by Caratheodory's Theorem on planar convex sets, $I_s$ is the convex hull of either one or two points in $V_c$, i.e., $I_s$ is either an interval or a singleton (for a proof see, for example, \cite{VS}). It would therefore suffice to prove $I_s$ is a singleton - to do so, let us first note that since $J(p_c)$ is the $\alpha$-limit set of $0$, if $I_s$ is not a singleton but an interval, its interior must be a component of $F(p_c)$. Consequentially, if $I_s$ is an interval it immediately follows $x\in\partial I_s$, which implies $x$ lies on the boundary of some component in $F(p_c)$. Since this is not the case per our assumption on $x$, $I_s$ must be a singleton and Cor.\ref{single} now follows.
\end{proof}
\section{Sufficient conditions for the the existence of the Rössler attractor.}
Let $p\in P, p=(a,b,c)$ be a parameter value for the Rössler system, and recall we always denote by $F_p$ the corresponding vector field (see Eq.\ref{Field}). As stated earlier, in this section we prove a topological criterion for the existence of an attractor for the Rössler system (see Th.\ref{attrac} and Prop.\ref{attrac1}). To begin, recall that given parameter values $p=(a,b,c)\in P$, we always assume the parameters $a,b,c$ satisfy $a,b\in(0,1),c>1$. Moreover, we assume the corresponding vector field $F_p$ always generates precisely two fixed points in $\mathbf{R}^3$ - $P_{In}=(0,0,0)$  and $P_{Out}=(c-ab,\frac{ab-c}{a},\frac{c-ab}{a})$, both saddle foci, satisfying the following:
\begin{itemize}
    \item $P_{Out}$ has a stable, two-dimensional invariant manifold $W^s_{Out}$. Conversely, $P_{In}$ has a two-dimensional unstable, invariant manifold $W^u_{In}$ (see the illustration in Fig.\ref{crosx}).
    \item $P_{Out}$ has a one-dimensional unstable invariant manifold $W^u_{Out}$ which consists of two separatrices - $\Gamma_{Out}$, connecting $P_{Out},\infty$ (see Th.\ref{th21}) and $\Delta_{Out}$ ($\Delta_{Out}$ may, or may not, be bounded). Conversely, $P_{In}$ has a one-dimensional stable manifold $W^s_{In}$, composed of two separatrices $\Gamma_{Out}$, connecting $P_{In},\infty$ (again, see Th.\ref{th21}) and $\Delta_{In}$ (which, again, may, or may not, be bounded). See the illustration in Fig.\ref{crosx}.
\end{itemize}

\begin{figure}[h]
\centering
\begin{overpic}[width=0.35\textwidth]{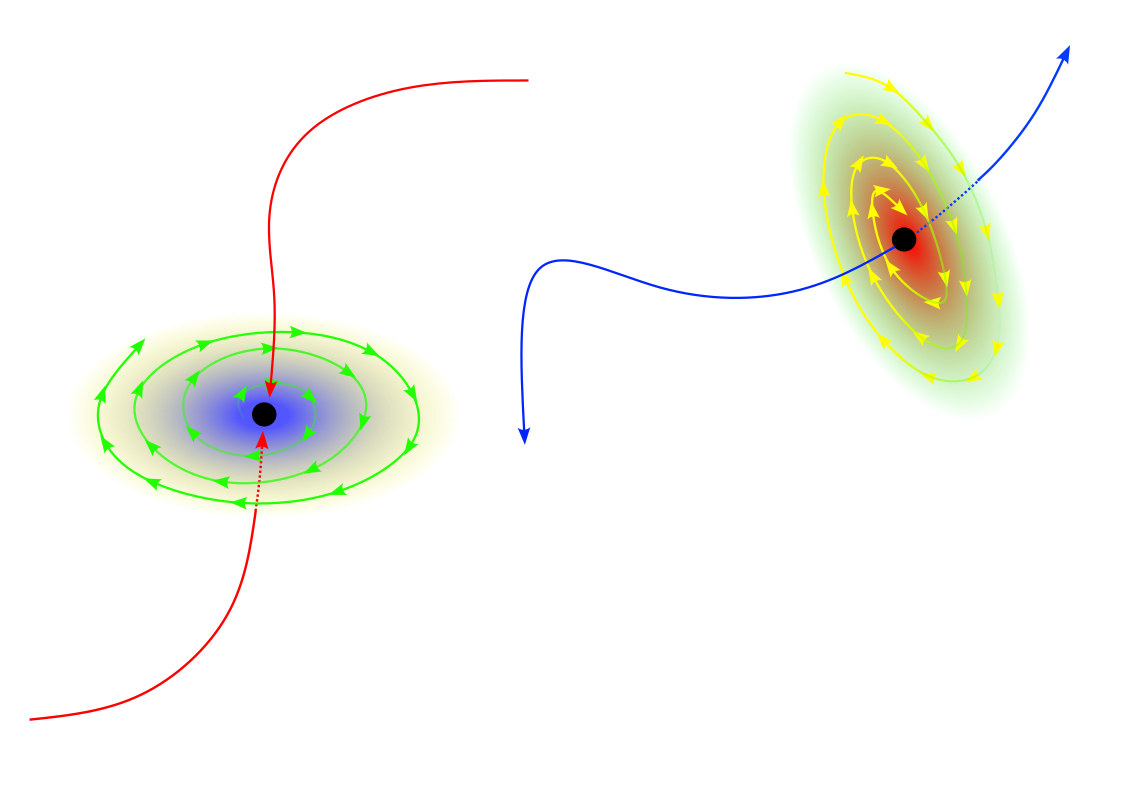}
\put(10,0){$\Gamma_{In}$}
\put(370,550){$\Delta_{In}$}
\put(-40,250){$W^u_{In}$}
\put(950,550){$\Gamma_{Out}$}
\put(850,450){$P_{Out}$}
\put(870,290){$W^s_{Out}$}
\put(260,400){$P_{In}$}
\put(485,280){$\Delta_{Out}$}

\end{overpic}
\caption[Fig31]{The local dynamics around the saddle foci $P_{In}$ and $P_{Out}$. $\Gamma_{In}$ arrives at $P_{In}$ from $\infty$, and $\Gamma_{Out}$ tends to $\infty$. Conversely, each of $\Delta_{In}$ and $\Delta_{Out}$ may (or may not) be bounded.}
\label{crosx}
\end{figure}

Now, given any parameter value $(a,b,c)$, let us consider the hyperplane $S_b=\{(x,y,-b)|x,y\in\mathbf{R}\}$ (see the illustration in Fig.\ref{sb}) - from now on, we denote by $\overline{S_b}$ the closure of $S_b$ in the three-sphere $S^3$ (it is easy to see $\overline{S_b}$ is homeomorphic to $S^2$ - see the illustration in Fig.\ref{sb1}). We first prove the following fact:
\begin{proposition}\label{attrac1}
Let $p\in P$, $p=(a,b,c)$ be a parameter s.t. $\delta=\overline{W^s_{Out}}\cap \overline{S_b}$ is a closed curve on the sphere $\overline{S_b}$, which homotopic to the circle $S^1$ (as illustrated in Fig.\ref{sb1} or Fig.\ref{sb}). Then, the Rössler system corresponding to $p$ generates a compact attractor $A$ - moreover, $P_{Out}\not\in A$, and $A$ attracts the component $\Delta_{Out}$ of $W^u_{Out}$.
\end{proposition}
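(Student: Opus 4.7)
The plan is to carve out a forward-invariant topological $3$-ball inside $\mathbf{R}^3$ using the disk $D^-\subset\overline{S_b}$ cut off by $\delta$ together with a disk cap of $\overline{W^s_{Out}}$, and to identify the attractor with the $\omega$-limit set of $\Delta_{Out}$ inside it. The pivotal elementary computation is that on $S_b$,
\[
\dot z\big|_{z=-b} \;=\; bx+(-b)(x-c) \;=\; bc \;>\; 0,
\]
so $F_p$ is everywhere transverse to $S_b$ and points into $\{z>-b\}$. Since $P_{Out}$ has $z$-coordinate $(c-ab)/a>0>-b$, it lies above $S_b$, and the local $2$-disk at $P_{Out}$ inside $W^s_{Out}$ lies above $S_b$ too. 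By hypothesis, $\delta$ is a simple closed curve in $\overline{S_b}\cong S^2$, so by the Jordan curve theorem it bounds two disks $D^+,D^-\subset\overline{S_b}$; I label them so that $\infty\in D^+$, which makes $D^-$ a bounded disk in $\mathbf{R}^3$. Flowing the local stable disk at $P_{Out}$ outward along $W^s_{Out}$ until it first meets $\overline{S_b}$, the single-curve hypothesis should yield a topological disk $D_s\subset\overline{W^s_{Out}}\cap\overline{\{z\geq -b\}}$ with $P_{Out}\in\operatorname{int}(D_s)$ and $\partial D_s=\delta$.

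With $D_s$ in hand, $\Sigma:=D_s\cup D^-$ is an embedded topological $2$-sphere in $S^3$ that misses $\infty$, hence bounds a $3$-ball $U\subset\mathbf{R}^3$. Next I would verify that $U$ is positively invariant: on $\operatorname{int}(D^-)\subset S_b$ the transversality $\dot z=bc>0$ drives trajectories into $U$; on $D_s\subset W^s_{Out}$ the flow is tangent to $D_s$ and sweeps every orbit to $P_{Out}$, so nothing exits through $D_s$. Because $W^u_{Out}$ is $1$-dimensional and transverse to $W^s_{Out}$ at $P_{Out}$, its two branches enter opposite sides of $D_s$. The branch $\Gamma_{Out}$ accumulates at $\infty$, which lies on the complementary side; by elimination $\Delta_{Out}$ enters $\operatorname{int}(U)$, so $\overline{\Delta_{Out}}\subset\overline U$ is bounded. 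Then $A:=\omega(\Delta_{Out})\subset\overline U$ is non-empty, compact, and flow-invariant, and attracts $\Delta_{Out}$ by construction; a standard argument shows $A$ attracts an open neighborhood of itself and is a genuine attractor in the sense of the proposition.

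To see that $P_{Out}\notin A$, any orbit in $\overline U\setminus W^s_{Out}$ accumulating at $P_{Out}$ would, by the saddle-focus linearisation at $P_{Out}$ (the only eigen-directions with real part $\leq 0$ lie along $W^s_{Out}$), have to lie in $W^s_{Out}$, a contradiction. The only remaining possibility is that $\Delta_{Out}$ is itself a homoclinic loop of $P_{Out}$; but then $\overline{\Delta_{Out}}$ would contribute to $\overline{W^s_{Out}}\cap\overline{S_b}$ additional pieces beyond $\delta$, violating the single-curve hypothesis. The main obstacle in this plan is the step that extracts the embedded cap $D_s$: since $W^s_{Out}$ is only an immersed $2$-surface, in principle it could fold or revisit itself between $P_{Out}$ and $S_b$, and ruling this out is the technical heart of the argument. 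It is precisely here that the hypothesis that $\delta$ is a single simple closed curve homotopic to $S^1$ in $\overline{S_b}$ is used essentially.
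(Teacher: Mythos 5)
Your overall strategy is the same as the paper's: use the computation $F_p(x,y,-b)\bullet(0,0,1)=bc>0$ to make $S_b$ a one-way membrane, use the hypothesis on $\delta=\overline{W^s_{Out}}\cap\overline{S_b}$ to cut out a bounded forward-invariant region whose boundary consists of a piece of $S_b$ and a piece of $\overline{W^s_{Out}}$, place $\Delta_{Out}$ inside it, and extract the attractor there. The paper works with the components of $Q_b\setminus\overline{W^s_{Out}}$ and then shrinks the trapping region to a set $Q_A$ whose closure avoids $P_{Out}$ (by flowing a curve $\gamma\subseteq S_b$ to a small cross-section $S_1$ transverse to $\Delta_{Out}$), which is how it gets $P_{Out}\notin A$ for free; you instead keep $P_{Out}$ on the boundary of your ball $U$ and exclude it afterwards by the saddle-focus linearisation. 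That part of your argument is fine, and your identification of the construction of the cap $D_s$ as the technical heart is fair --- the paper's phrasing in terms of components of the complement quietly carries the same burden.

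The genuine gap is in how you define the attractor. You set $A:=\omega(\Delta_{Out})$ and assert that ``a standard argument shows $A$ attracts an open neighborhood of itself.'' That is not true in general: the $\omega$-limit set of a single orbit inside a trapping region need not attract any neighborhood of itself (e.g.\ $\Delta_{Out}$ could limit onto a periodic orbit of saddle type, or onto a proper non-attracting invariant subset of the maximal attractor), so your $A$ may fail to be an attractor even though it is compact, invariant, and attracts $\Delta_{Out}$. The correct object --- and the one the paper takes, invoking Th.~6.1 of \cite{Chu} --- is the maximal invariant set of the trapping region, $\bigcap_{t\geq 0}\phi^p_t(Q_A)$ (equivalently the $\omega$-limit set of the whole region, not of one orbit); this attracts every point of the trapping region, hence in particular $\Delta_{Out}$, and contains your $\omega(\Delta_{Out})$. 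The fix is a one-line change of definition, but as written the final step of your proof does not establish the attractor property claimed in the statement.
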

\begin{proof}
We first begin by analyzing the local dynamics of the vector field $F_p$ on $S_b$. To begin, note the normal vector to $S_b$ is $(0,0,1)$, and that by computation $F_p(x,y,-b)\bullet(0,0,1)=bx-b(x-c)=bc$ - per our assumption on the parameter space, since $a,b\in(0,1)$ and $c>1$, it follows that $F(x,y,-b)\bullet(0,0,1)>0$. Or, in other words, on every point of $s\in S_b$, $F_p(s)$ points inside the set $Q_b=\{(x,y,z)|z>-b\}$ (see the illustration in Fig.\ref{sb}). Moreover, by $P_{In}=(0,0,0)$ and by $P_{Out}=(c-ab,\frac{ab-c}{a},\frac{c-ab}{a})$ it is easy to see $P_{In},P_{Out}$ are both interior to $Q_b$. Consequentially, it follows the two dimensional unstable manifold $W^u_{In}$ for $P_{In}$ is also trapped inside $Q_b$.\\

\begin{figure}[h]
\centering
\begin{overpic}[width=0.4\textwidth]{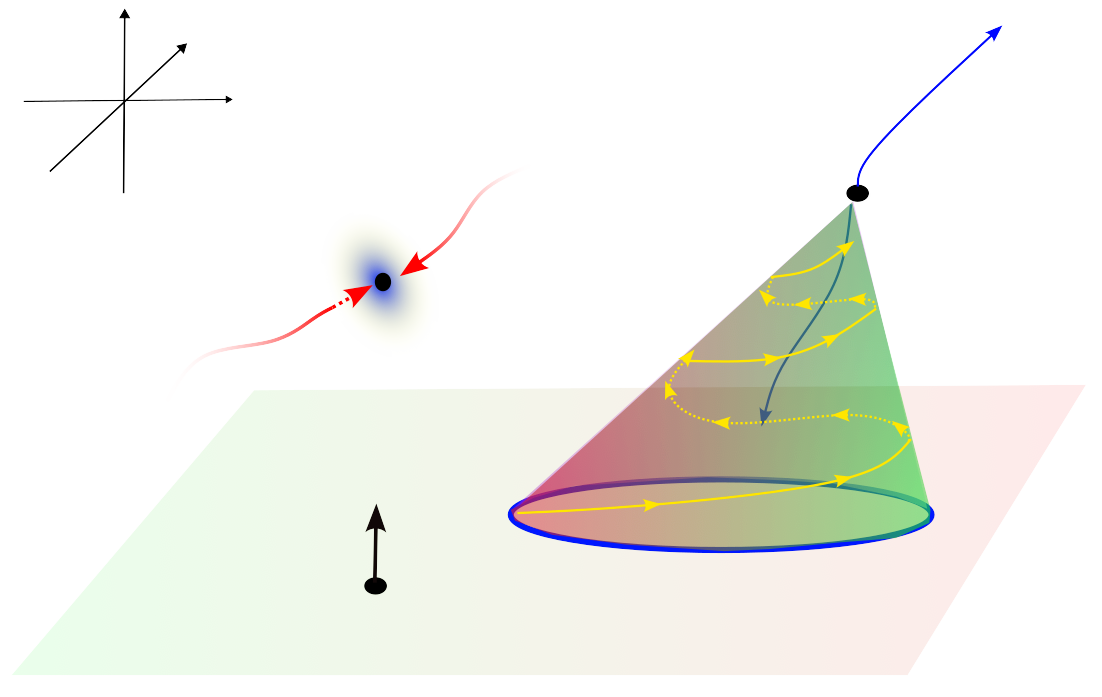}
\put(10,-20){$S_b$}
\put(400,490){$\Delta_{In}$}
\put(170,270){$\Gamma_{In}$}
\put(900,530){$\Gamma_{Out}$}
\put(800,450){$P_{Out}$}
\put(830,270){$W^s_{Out}$}
\put(300,430){$P_{In}$}
\put(545,250){$\Delta_{Out}$}
\put(565,80){$\delta$}
\put(220,530){$x$}
\put(160,610){$y$}
\put(100,635){$z$}
\end{overpic}
\caption[Fig31]{The plane $S_b$, the directions of $F_p$ on it, and the region $Q_b$ above it. In this illustration, $\overline{W^s_{Out}}\cap \overline{S_b}$ is the blue circle $\delta$, while $Q_1$ is the cone trapped between $S_b$ and $W^s_{Out}$. In this scenario $P_{In}\not\in Q_1$- it is easy to see $Q_b\setminus \overline W^s_{Out}$ includes two components, s.t. one of them is $Q_1$ (note $\Delta_{Out}\subseteq Q_1$).}
\label{sb}
\end{figure}

Now, let us consider the set $Q=Q_b\setminus \overline{W^s_{Out}}$ - by our assumption that $\overline{W^s_{Out}}\cap \overline{S_b}$ it is easy to see $Q$ is composed of two components, $Q_1$ and $Q_2$, as illustrated in Fig.\ref{sb1} and \ref{sb}. Since the one-dimensional unstable manifold $W^u_{Out}$ is transverse to $W^s_{Out}$ at the saddle-focus $P_{Out}$, by $W^s_{Out}=\Gamma_{Out}\cap\Delta_{Out}$ it follows $W^u_{Out}$ separates $\Gamma_{Out}$ and $\Delta_{Out}$ in $Q$ (see the illustration in Fig.\ref{sb1}). Therefore, let us denote by $Q_1$ the component of $Q$ s.t. $\Delta_{Out}\subseteq Q_1$ (see the illustration in Fig.\ref{sb1} and Fig.\ref{sb}).\\

\begin{figure}[h]
\centering
\begin{overpic}[width=0.4\textwidth]{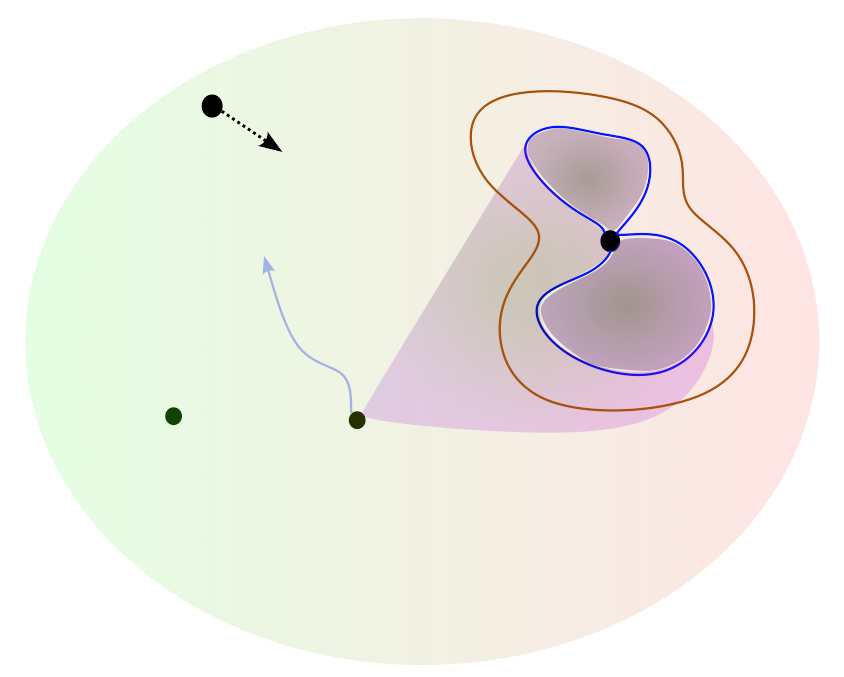}
\put(370,250){$P_{Out}$}
\put(150,270){$P_{In}$}
\put(720,560){$\delta$}
\put(750,490){$\infty$}
\put(830,270){$\gamma$}
\put(215,430){$\Delta_{Out}$}
\put(545,250){$W^s_{Out}$}
\end{overpic}
\caption[Fig31]{The set $\overline{S_b}$, sketched as a sphere in $S^3$ (with $\infty$ on it). The region $Q_b$ is the region trapped inside the sphere, while $W^s_{Out}$ is the purple surface. The set $Q_1$ is the component of ${Q_b}\setminus \overline{W^s_{Out}}$ which includes $P_{Out}$ (it also includes $\Delta_{Out}$, and in this scenario, we also have $P_{In}\in Q_1$). In this scenario, $\overline{W^s_{Out}}\cap \overline{S_b}$ is the blue curve $\delta$ which is homotopic (and not homeomorphic) to $S^1$, with singularities at $\infty$. The dark orange curve is $\gamma$.}
\label{sb1}
\end{figure}

To continue, let $S_1$ denote a local cross-section transverse to $\Delta_{Out}$ - moreover, we choose $S_1$ to be sufficiently small s.t. it is transverse to the flow. Per our assumption that $\overline{W^s_{Out}}\cap \overline{S_b}$, it follows there exists a curve $\gamma\subseteq {S_b}\cap\partial Q_1$ (where both boundaries are taken in $\mathbf{R}^3$) s.t. the forward trajectory of any $s\in\gamma$ hits $S_1$ transversely (see the illustration in Fig.\ref{sb1} and Fig.\ref{sb2}). Now, consider the region $Q_A$, trapped between $S_b$ and the flow lines connecting $\gamma$ and $S_1$, as illustrated in Fig.\ref{sb2}. It is easy to see $Q_A$ is bounded in $\mathbf{R}^3$ - and furthermore, it is also easy to see $P_{Out}\not\in\overline Q_A$. By definition, since every $s\in\partial Q_A$ either lies on either $S_b$, $S_1$, or on some flow line connecting $\gamma$ and $S_1$, it follows the vector field is either tangent to $\partial Q_A$, or points inside $Q_A$ - consequentially, no trajectories can escape $Q_A$ under the flow. Therefore, recalling we denote the flow by $\phi^p_t$, $t\in\mathbf{R}$, we conclude $A=\overline{\lim_{t\to\infty}\phi^p_t(\partial Q_A)}$ is a compact, invariant set. In particular, $A$ is strictly interior to $Q_A$, hence $P_{Out}\not\in A$.\\

\begin{figure}[h]
\centering
\begin{overpic}[width=0.4\textwidth]{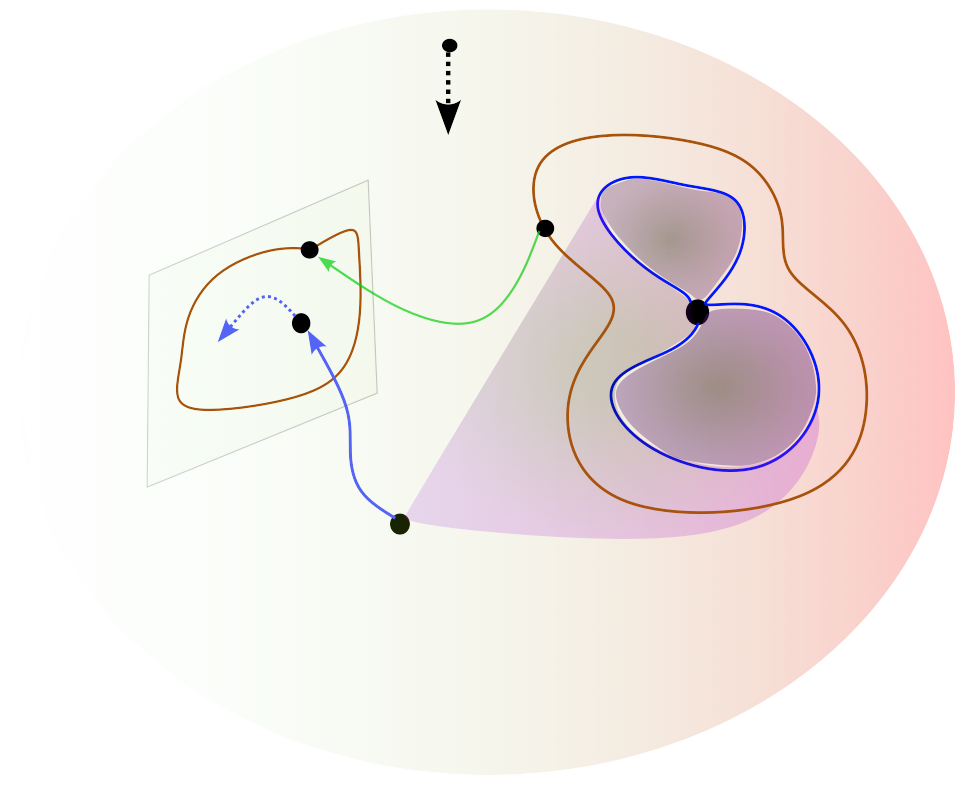}
\put(370,230){$P_{Out}$}
\put(720,560){$\delta$}
\put(750,490){$\infty$}
\put(830,270){$\gamma$}
\put(215,430){$\Delta_{Out}$}
\put(215,570){$\gamma_1$}
\put(385,600){$S_1$}
\put(545,200){$W^s_{Out}$}
\end{overpic}
\caption[Fig31]{The cross section $S_1$ - by the continuity of the flow, the trajectory of every initial condition on $\gamma$ flows to $\gamma_1$, some curve on $S_1$ (where $S_1$ is a cross-section transverse to the flow and to $\Delta_{Out}$). Consequentially, there exists a connected region $Q_A\subseteq Q_1$ from which no trajectories can escape.}
\label{sb2}
\end{figure}

Since no trajectory can escape $Q_A$, by Th.6.1 in \cite{Chu}, it follows $A$ is an attractor for the flow. Having proven the existence of $A$, to conclude the proof of Prop.\ref{attrac1} it remains to prove the invariant manifold $\Delta_{Out}$ is attracted to $A$ - that is, that for all $s\in\Delta_{Out}$, $\overline{\lim_{t\to\infty}\phi^p_t(s)}\subseteq A$. That, however, is immediate - recalling $S_1\subseteq\partial Q_A$ is transverse to $\Delta_{Out}$ (see the illustration in Fig.\ref{sb2}), it follows that for every $s\in\Delta_{Out}$ there exists some $t_0\geq0$ s.t. $\phi^p_{t_0}(s)\in Q_A$. Since $A$ is the accumulation set under the flow for initial conditions in $Q_A$, Prop.\ref{attrac1} now follows.
\end{proof}
\begin{figure}[h]
\centering
\begin{overpic}[width=0.5\textwidth]{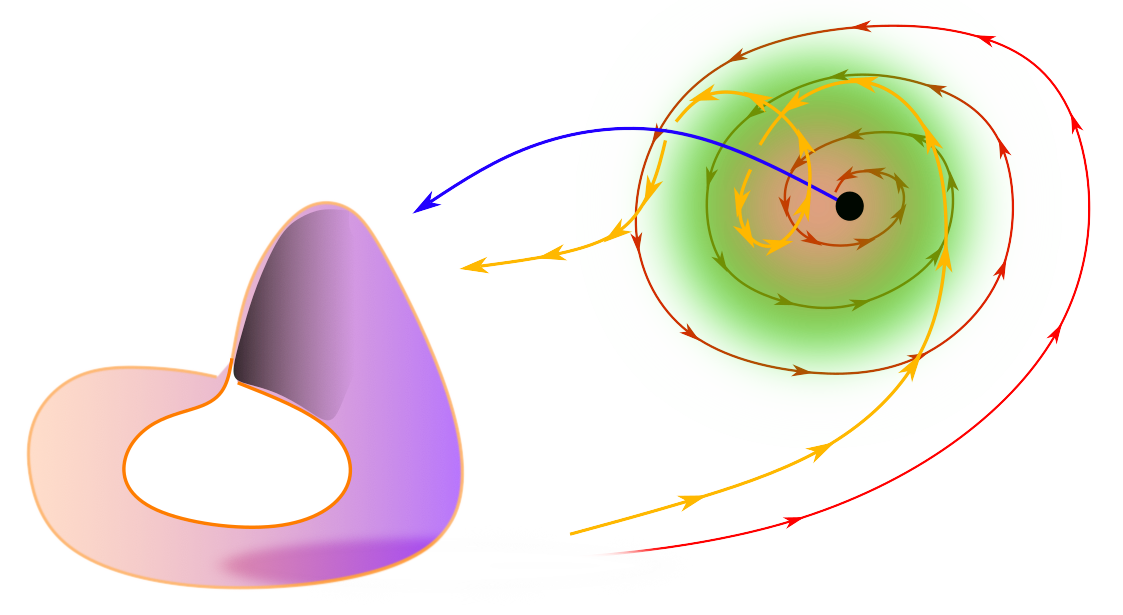}
\put(780,320){$P_{Out}$}
\put(550,510){$W^s_{Out}$}
\put(450,370){$\Delta_{Out}$}
\put(410,170){$A$}
\end{overpic}
\caption[Fig31]{Per Prop.\ref{attrac1}, the two-dimensional $W^s_{Out}$ shields trajectories from escaping to $\infty$, while $\Delta_{Out}$ repels them towards the attractor $A$ (and gets attracted to $A$ in itself).}
\label{sb3}
\end{figure}

Let $\kappa\subseteq P$ denote the parameter set s.t. for $v\in\kappa$, the Rössler system corresponding to $v$ satisfies the assumptions (and conclusions) of Prop.\ref{attrac1}. Even though it is not at all obvious from our arguments above that $\kappa$ is non-empty, from the numerical evidence of \cite{MBKPS}, \cite{G} and \cite{BBS}, it appears this is the case. In more detail, as observed numerically in \cite{BBS} and \cite{MBKPS} (among others), in many parameters the two-dimensional invariant manifold $W^s_{Out}$ appears to shield trajectories from escaping to $\infty$ - while $\Delta_{Out}$ pushes them towards the attractor, as illustrated in Fig.\ref{sb3}. Finally, before moving on, let us further remark the argument used to prove Prop.\ref{attrac1} in fact holds whenever $b,c>0$. That is, Prop.\ref{attrac1} holds even outside our parameter range - say, for example when $P_{Out}$ is a real saddle rather than a saddle focus. This correlates with \cite{BB2}, where it was observed that even for parameter values outside of $P$ the flow can still generate an attracting invariant set.\\

Having proven a sufficient condition for the existence of an attracting invariant set, we are now concerned with the converse - i.e., we prove the existence of a repeller for the flow. Taking a different approach to the one used to rove Prop.\ref{attrac1}, by direct qualitative analysis of the vector field we now prove:

\begin{lemma}
    \label{attrac3}
    Let $p\in P$, $p=(a,b,c)$ be parameter value - then, there exists an unbounded repeller $R$ for the corresponding Rössler system.
\end{lemma}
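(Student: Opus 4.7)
The plan is to mirror the construction of Prop.\ref{attrac1} in reverse time: produce the repeller as an attractor for the reverse flow $-F_p$. The natural candidate is an unbounded invariant set accumulating at $\infty \in S^3$, since by Th.\ref{th21} $\infty$ is a fixed point of $F_p$ with $\Gamma_{In}$ emanating from it in backward time. Because there are no further hypotheses on $W^s_{Out}$, we cannot reverse Prop.\ref{attrac1} directly and must instead exploit the unconditional sign information the equations supply on $S_b$.

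First I would reuse the computation from the proof of Prop.\ref{attrac1}: since $F_p(x,y,-b) \cdot (0,0,1) = bc > 0$ on $S_b$, the half-space $\{z > -b\}$ is forward-invariant, so equivalently $V = \{z \leq -b\}$ is backward-invariant under $F_p$. Neither fixed point lies in $V$ -- $P_{In}$ has $z = 0$ and $P_{Out}$ has $z = (c-ab)/a > 0$ under the standing assumptions $a,b \in (0,1)$, $c > 1$ -- so backward trajectories of points in $V$ admit no fixed-point limits in $\mathbb{R}^3$. Within the wedge $W = V \cap \{0 < x < c\}$, a direct computation of the Lie derivative of the $z$-coordinate along $-F_p$ yields $-(bx + z(x-c)) = -bx + z(c-x) < 0$, so any reverse trajectory confined to $W$ has monotonically decreasing $z$ and must escape to $\infty$. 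I would then define $R = \bigcap_{t \geq 0}\phi^p_{-t}(U \cap \mathbb{R}^3)$ for a suitably chosen open neighborhood $U$ of $\infty$ in $S^3$ lying inside the reverse-time basin of $\infty$; by construction $R$ is flow-invariant, contains $\Gamma_{In}$ (hence is unbounded), and qualifies as a repeller by the reverse-time analogue of Th.~6.1 in \cite{Chu} used in Prop.\ref{attrac1}.

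The main obstacle is upgrading the monotonicity on $W$ to a genuine backward-trapping neighborhood: $W$ itself is not backward-invariant, because a reverse trajectory can exit the strip $0 < x < c$ through its $x$-boundaries even while $z$ is decreasing. One can circumvent this in two ways. The cleaner option is to invoke the Poincar\'e compactification at $\infty$ as in \cite{LiLl} to identify $\infty$ as a hyperbolic saddle of $F_p$ in $S^3$; its unstable manifold, restricted to $\mathbb{R}^3$, is then automatically an unbounded $F_p$-invariant set which attracts nearby backward trajectories, yielding the repeller $R$ directly. Alternatively, one patches together complementary sub-wedges of $V$ distinguished by the signs of $x$ and of $x - c$, verifies that on each piece some coordinate Lyapunov-type quantity tends monotonically to $\pm\infty$ under $-F_p$, and checks that trajectories can only pass between adjacent pieces in a way consistent with $\|\phi^p_{-t}(x)\| \to \infty$; this produces the required backward-trapping open set containing $\Gamma_{In}$.
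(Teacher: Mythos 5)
There is a genuine gap: you correctly identify that your wedge $W=\{z\le -b\}\cap\{0<x<c\}$ is not backward-invariant, but neither of your two proposed repairs actually closes the argument. The ``cleaner option'' rests on the assertion that $\infty$ is a hyperbolic saddle of the compactified field; Th.\ref{th21} only provides a \emph{continuous} extension of $F_p$ to $S^3$ with $\infty$ as a fixed point, and the Poincar\'e compactification of the R\"ossler system is degenerate at infinity (this is precisely why the analysis in \cite{LiLl} is delicate), so there is no unstable manifold of $\infty$ to invoke without substantial extra work that is nowhere supplied. The ``alternative'' of patching sign-distinguished sub-wedges of $\{z\le -b\}$ and checking that transitions between them are consistent with escape to $\infty$ is exactly where the content of the lemma lives, and you leave it as a plan rather than a proof; as stated it is not clear the pieces can be glued into a single negatively invariant open set.

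For comparison, the paper avoids all reference to the behavior at $\infty$: it exhibits one explicit unbounded wedge $R=\{\dot y<0,\ \dot x>0,\ y>b+1\}$, bounded by the three planar faces $\{\dot y=0\}$, $\{\dot x=0\}$, $\{y=b+1\}$, and verifies by three inner products with the outward normals $(1,a,0)$, $(0,1,1)$, $(0,-1,0)$ that $F_p$ points strictly outward on every face of $\partial R$. That single computation makes $R$ negatively invariant, hence a repeller, and $R$ is manifestly unbounded. Note also that the paper's wedge sits in $\{x<0\}$, whereas your monotonicity region requires $0<x<c$ --- so your candidate is not a partial version of the working construction; the correct choice of region is the missing idea. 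If you want to salvage your approach, replace the one-coordinate monotonicity on $W$ by a region on whose \emph{entire} boundary the field has a definite sign against the normal, which is what the paper's choice of $R$ accomplishes.
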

\begin{proof}
Consider the region $R=\{(x,y,z)|\dot{y}<0, \dot{x}>0,y>b+1\}$ (where $\dot{x}=-y-z$ and $\dot{y}=x+ay$ - see Eq.\ref{Field}). It is easy to see $R$ is trapped between the three surfaces $R_1=\{(x,-\frac{x}{a},z)|y>b+1, -z>\frac{x}{a}\}$, $R_2=\{(x,-z,z)|x-az<0,z<-b-1\}$ and $R_3=\{(x,b+1,z)|-z-b-1\geq0,x+a(b+1)<0\}$ (see the illustration in Fig.\ref{sb4}). By computation, the normal vectors to each surface are $r_1=(1,a,0)$, $r_2=(0,1,1)$ and $r_3=(0,-1,0)$ (respectively) - moreover it is easy to see $r_1,r_2$ and $r_3$ all point outside of $R$, as illustrated in Fig.\ref{sb4}.\\

\begin{figure}[h]
\centering
\begin{overpic}[width=0.3\textwidth]{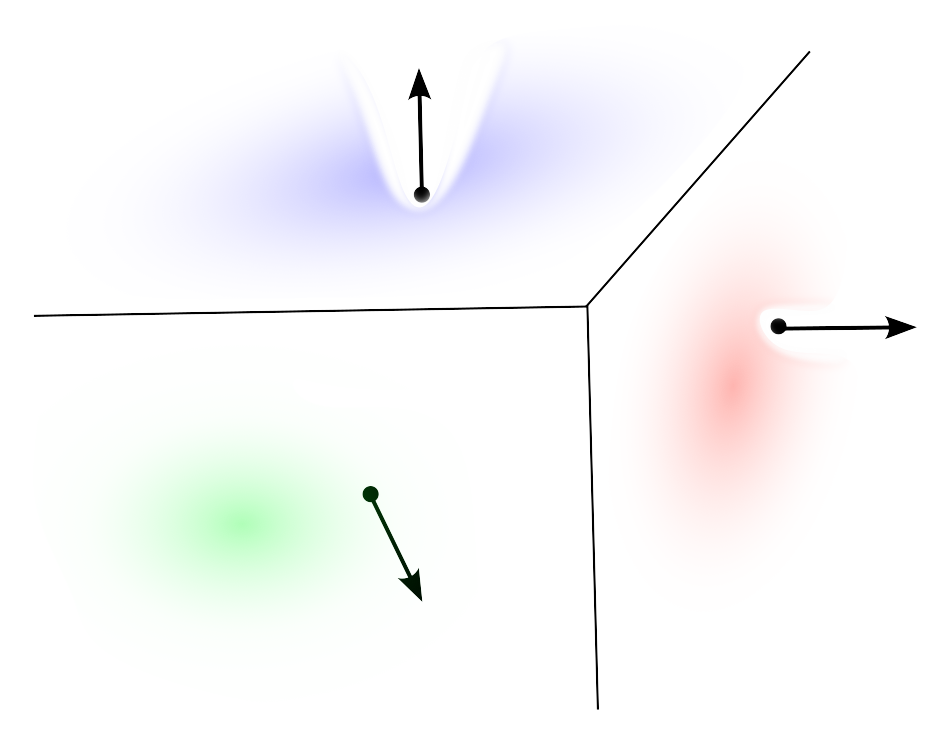}
\put(700,350){$R_1$}
\put(850,460){$r_1$}
\put(550,510){$R_2$}
\put(360,610){$r_2$}
\put(450,350){$R_3$}
\put(350,170){$r_3$}
\end{overpic}
\caption[Fig31]{The body $R$, trapped between $R_1,R_2$ and $R_3$ (where $r_1,r_2$ and $r_3$ are the respective normal vectors). As we prove, for $v_i\in R_i$, $F_p(v_i)\bullet r_i>0$ - i.e., $F_p$ points outside of $R$ throughout $\partial R$.}
\label{sb4}
\end{figure}

By computation, $F_p(x,-\frac{x}{a},z)\bullet(1,a,0)=\frac{x}{a}-z$, $F_p(x,b+1,z)(0,-1,0)=-(x+a(b+1))$, which implies that given $v_i\in R_i$, $i=1,3$, $F_p(v_i)\bullet r_i>0$. For $i=2$, let us note the equation $F_p(x,-z,z)\bullet(0,1,1)$ is positive precisely when $z<\frac{(b+1)x}{a+c-x}$. Set $f(x)=\frac{(b+1)x}{a+c-x}$ - it is easy to see $f$ is non-decreasing on $x\in(-\infty,a+c)$ and that $\lim_{x\to-\infty}f(x)=-b-1$ - which, by $z\leq-b-1<f(x)$, $x\in(-\infty,a+c)$ implies that for all $(x-z,z)\in R_2$ we have $z<\frac{(b+1)x}{a+c-x}$. Therefore, again, for $v_3\in R_3$, $F_p(v_3)\bullet(0,1,1)>0$. Consequentially, it follows $F_p$ points outside of $R$ throughout $\partial R$, which implies $R$ is a repeller. Moreover, since $R$ is unbounded the assertion follows.
\end{proof}
\begin{remark}
Recall we denote the flow corresponding to $F_p$ by $\phi^p_t$, $t\in\mathbf{R}$. With just a little more work, one can easily prove that for any $v\in R$ and all $t<0$, $\phi^p_t(v)\in R$ - and that there are no fixed points in $R$. Since by definition $R\subseteq\{\dot{y}\leq0\}$, for every $v\in R$ we have $\lim_{t\to-\infty}\phi^p_t(v)=\infty$. 
\end{remark}
\begin{remark}
    In addition to the well-known attractor, the existence of repelling, invariant sets for the Rössler system was observed numerically in \cite{BBS}. However, in \cite{BBS} the said set is composed of periodic trajectories.
\end{remark}
\begin{figure}[h]
\centering
\begin{overpic}[width=0.5\textwidth]{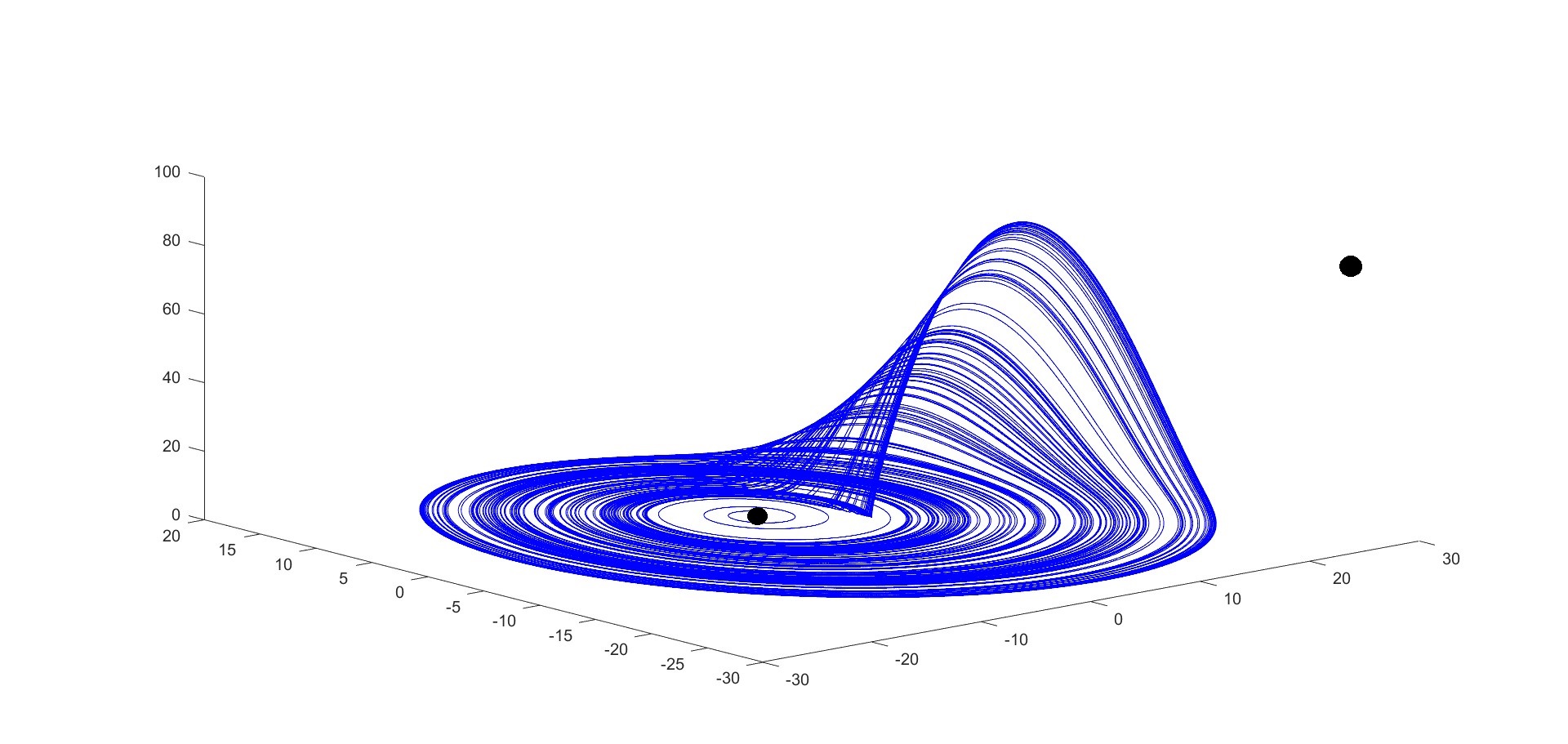}
\end{overpic}
\caption[Fig31]{The attractor of the Rössler system given by Eq.\ref{Vect} at $(a,b,c)=(0.8.0.2,15.7)$, sketched in Matlab. The fixed-points are represented as black dots.}
\label{cross4}
\end{figure}

Having proven Prop.\ref{attrac1} and Lemma \ref{attrac3}, we now summarize our results as follows:

\begin{theorem}
    \label{attrac} Let $p=(a,b,c)$ be some parameter in the parameter space $P$. Then, the corresponding Rössler system satisfies the following:
    \begin{enumerate}
        \item There exists an unbounded repeller for the flow.
        \item Recall $S_b=\{(x,y,-b)|x,y\in\mathbf{R}\}$ and that $W^s_{Out}$ denotes the two-dimensional stable manifold of the saddle focus $P_{Out}$. Then, provided $\delta=\overline{W^s_{Out}}\cap \overline{S_b}$ is a closed curve on $\overline{S_b}$ homotopic to $S^1$ (where both closures are taken in $S^3$), we have the following: 
        
        \begin{itemize}
            \item There exists a compact attractor for the flow, $A$. Moreover, $P_{Out}\not\in A$.
            \item A component $\Delta_{Out}$ of $W^u_{Out}$ is attracted to $A$, as illustrated in Fig.\ref{sb3}.
            \item The attractor $A$ is robust - i.e., it persists under sufficiently small $C^1$ perturbations of the vector field $F_p$.
        \end{itemize}
    \end{enumerate}
\end{theorem}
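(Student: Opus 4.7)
The plan is to assemble Theorem~\ref{attrac} from the two preceding results and then establish the single new ingredient, namely the $C^1$-robustness of the attractor. Assertion~$(1)$ is exactly Lemma~\ref{attrac3}. Under the hypothesis of assertion~$(2)$, the existence of a compact attractor $A$ with $P_{Out}\notin A$ and the attraction of $\Delta_{Out}$ into $A$ are precisely the content of Prop.~\ref{attrac1}. The task therefore reduces to showing that for every vector field $\tilde F$ sufficiently close to $F_p$ in the $C^1$ topology, the flow of $\tilde F$ also admits a compact attractor analogous to $A$.

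My approach is to show that the trapping region $Q_A$ built in the proof of Prop.~\ref{attrac1} is itself $C^1$-robust; once this is done, the general principle that a trapping region generates an attractor (Th.~6.1 in \cite{Chu}) delivers the perturbed attractor $\tilde A$ immediately. I will verify the ingredients of $Q_A$ persist in turn. Since $P_{Out}$ is a hyperbolic saddle-focus, the implicit function theorem yields a hyperbolic continuation $\tilde P_{Out}$, and by the stable-manifold theorem with parameters both $\tilde W^s_{Out}$ and $\tilde W^u_{Out}$ exist and lie $C^1$-close to $W^s_{Out},W^u_{Out}$ on every fixed compact set. On the compact portion of $S_b$ used in the construction the transversality inequality $\tilde F\cdot(0,0,1)>0$ still holds, because $F_p\cdot(0,0,1)=bc>0$ was an open $C^0$-condition. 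Finally, a local cross-section $S_1$ transverse to $\Delta_{Out}$ remains transverse to $\tilde\Delta_{Out}$ by $C^1$-closeness, so the flow lines of $\tilde F$ from a suitable arc on $S_b$ again arrive on $S_1$. Assembling these pieces produces a slightly adjusted trapping region $\tilde Q_A$ for $\tilde F$, bounded away from $\tilde P_{Out}$.

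The step I expect to be the main obstacle is checking that the topological hypothesis on $\delta$ survives the perturbation, because $\overline{W^s_{Out}}$ is non-compact and meets the point at infinity in $S^3$, so a global statement about the homotopy type of $\overline{\tilde W^s_{Out}}\cap\overline{S_b}$ is not automatic from local $C^1$-persistence. My plan is to observe that only a compact sub-arc of $\delta$ actually enters the definition of $Q_A$: what one truly needs is a closed curve $\tilde\delta\subseteq\overline{\tilde W^s_{Out}}\cap\overline{S_b}$ that continues to separate the two regions analogous to $Q_1,Q_2$ in Prop.~\ref{attrac1}. Because $W^s_{Out}$ meets $S_b$ transversely along $\delta$, transversality is an open $C^1$-condition, and the relevant portion of the intersection lies in a bounded region, a $C^1$-small perturbation yields such a $\tilde\delta$, after which the rest of the argument in Prop.~\ref{attrac1} carries over verbatim to $\tilde F$.
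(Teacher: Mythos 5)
Your proposal is correct in outline, and parts (1) and the first two bullets of (2) are assembled from Lemma~\ref{attrac3} and Prop.~\ref{attrac1} exactly as in the paper; where you diverge is the robustness argument. The paper does not re-verify the construction of $Q_A$ for the perturbed field at all: it flows $Q_A$ forward to obtain a pre-compact set $B_A\subseteq Q_A$ containing $A$ in its interior, whose boundary is a smooth compact surface on which $F_p$ points \emph{strictly} inward, and then robustness is immediate because a strict inequality on a compact surface is open under $C^1$ (indeed $C^0$) perturbation of the field. This sidesteps entirely the two issues you correctly identify as delicate --- the tangential pieces of $\partial Q_A$ (the flow tube from $\gamma$ to $S_1$, which would have to be rebuilt for $\tilde F$) and the behavior of $\delta$ at $\infty$. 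Your route, by contrast, re-runs the construction of the trapping region for $\tilde F$ using persistence of the hyperbolic continuation of $P_{Out}$ and its invariant manifolds; this works and buys slightly more (it shows the hypothesis on $\delta$, away from infinity, and the whole geometric picture persist, not merely the attractor), but it is heavier than necessary for the stated claim. Two refinements would tighten your version: the transversality of $W^s_{Out}$ to $S_b$ need not be assumed --- it is automatic, since $F_p$ is tangent to the flow-invariant surface $W^s_{Out}$ but strictly transverse to $S_b$, so the two surfaces cannot share a tangent plane; and the persistence of the flow tube is best argued via compactness of $\gamma$, which gives a uniform bound on the hitting time of $S_1$ and hence lets continuous dependence of solutions on the vector field (over a compact time interval) finish the job without invoking the stable-manifold theorem with parameters.
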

\begin{proof}
    By Prop.\ref{attrac1} and Lemma \ref{attrac3}, the only thing we need prove is the robustness of $A$. To do so, recall $A$ is compact - and that its basin of attraction, $Q_A$ (as defined in the proof of Prop.\ref{attrac1}) is pre-compact in $\mathbf{R}^3$. It is easy to see that by deforming $Q_A$ with the flow we generate a pre-compact set $B_A\subseteq Q_A$ s.t. $A$ is strictly interior to $B_A$, and the vector field $F_p$ points inside $B_A$ throughout $\partial B_A$ - moreover, since $S^2$ is homeomorphic to $S^2$ we can also ensure $\partial B_A$ forms a smooth surface, diffeomorphic to $S^2$. Consequentially, by the pre-compactness of $B_A$ it follows that for any sufficiently small $C^1$ perturbation of $F_p$, denoted by $F$, the vector field $F$ points inside $B_A$ throughout $\partial B_A$ and Th.\ref{attrac} now follows.
\end{proof}
\begin{figure}[h]
\centering
\begin{overpic}[width=0.5\textwidth]{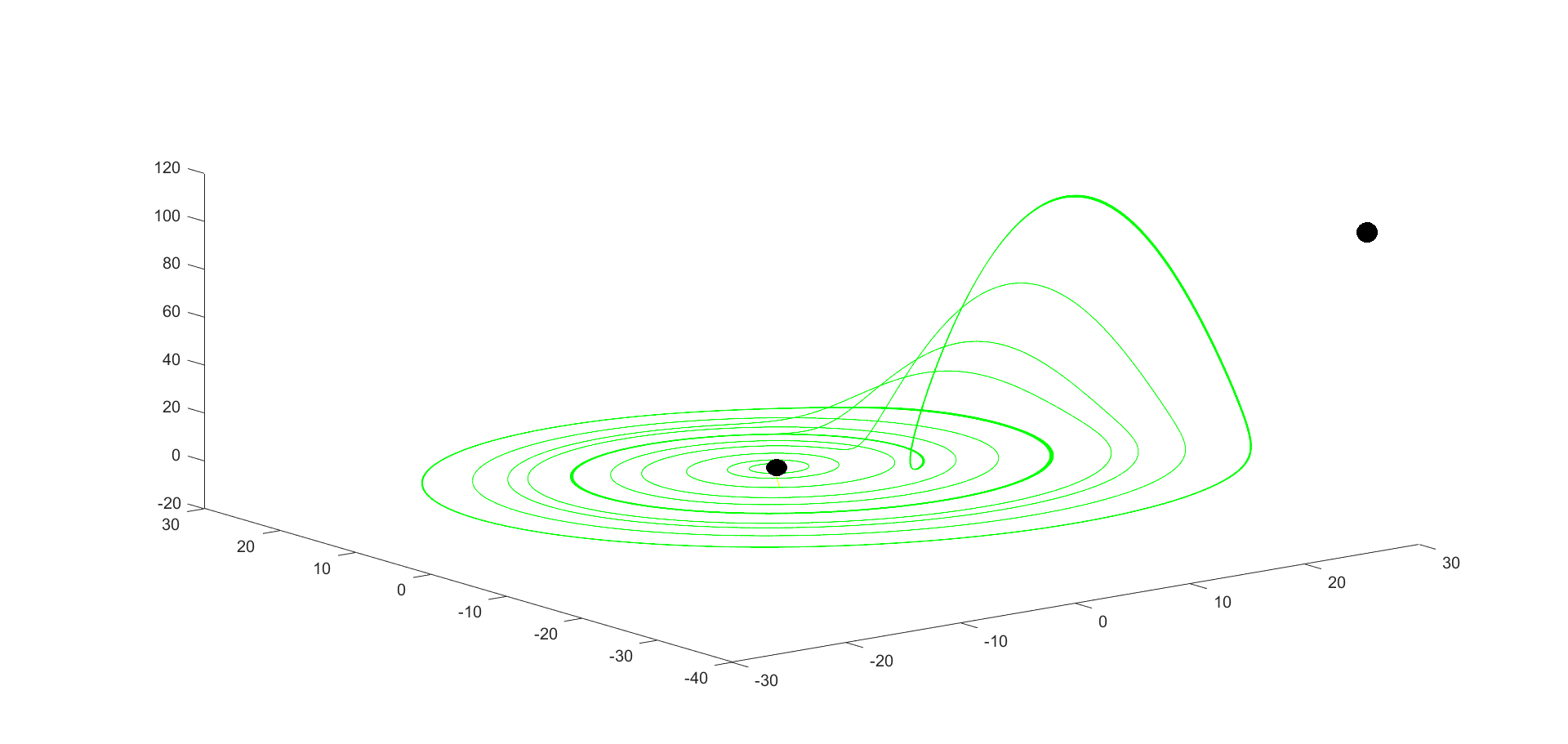}
\end{overpic}
\caption[Fig31]{The attractor of the Rössler system given by Eq.\ref{Vect} at $(a,b,c)=(0.8.0.2,18.33)$, sketched in Matlab (the fixed points correspond to the black dots) - at these parameters, the attractor appears to be a stable periodic trajectory (with a very long period).}
\label{cross7}
\end{figure}

Before we conclude this section, let us discuss several remarks arising from Th.\ref{attrac}. The first is that Th.\ref{attrac} correlates closely with the numerical results on the Rössler attractor. As observed numerically, it is the invariant manifolds of $P_{Out}$ which are responsible for the creation of the attractor (see, for example, \cite{MBKPS}, \cite{BBS}, and \cite{BB2} - among others) - that is, the two-dimensional stable manifold $W^s_{Out}$ shields trajectories from escaping to $\infty$, while a bounded component of $W^u_{Out}$ (i.e., $\Delta_{Out}$) propels them towards the attractor (see the illustration in Fig.\ref{sb3}). It is easy to see Th.\ref{attrac} (or more precisely, Prop.\ref{attrac1}) is an analytic counterpart of these observations.\\

A second (and final) remark is that Th.\ref{attrac} is essentially an existence theorem - that is, it does not teach us anything about the dynamics on the attractor $A$. In particular, Th.\ref{attrac} does not give us enough information to determine the chaoticity of the attractor. As observed numerically, the Rössler attractor, for whichever $(a,b,c)$ parameters it may exist in $P$, need not be chaotic. In fact, one of the widely observed facts about the Rössler attractor is that at some regions of $P$ it is chaotic - yet in others it is a stable, attracting periodic trajectory. This leads us to ask the following question: \textbf{can we analytically describe the evolution of the Rössler attractor from order to chaos?}\\

In the next section we will give a partial answer to this question. In more detail, we will prove that at least around trefoil parameters (see Def.\ref{def32}), the dynamical complexity of the Rössler system can be described by the discrete-time dynamics of a family of one-dimensional maps: the quadratic family, i.e., $p_c(x)=x^2+c$, $c\in[-2,\frac{1}{2}]$.
\section{One-dimensional first return maps in the Rössler system}
From now on unless said otherwise, $v=(a,b,c)$ would always denote a parameter in $P$, while $F_v$ would always denote the vector field generating the corresponding Rössler system (see Eq.\ref{Field}). Similarly, from now on $p=(a,b,c)$ would always denote a trefoil parameter for the Rössler system and $F_p$ would always denote the corresponding vector field (see Def.\ref{def32}). Additionally, recall that given a $C^1$-vector field of $\mathbf{R}^3$, $F$, its non-wandering set is the collection of initial conditions whose trajectory is not attracted to $\infty$ (see Def.\ref{nowandering}) - in particular, the non-wandering set includes every periodic trajectory generated by $F$. Our main objective in this section is to prove Th.\ref{polytheorem} - where we prove that given any parameter $v\in P$ sufficiently close to trefoil parameter, the dynamics of the Rössler system corresponding to $v$ can be described by those of a quadratic polynomial.\\

This section is organized as follows - we begin by proving the dynamics of the Rössler system at trefoil parameters on a certain invariant set can be reduced to those of $p_{-2}(x)=x^2-2$ on the interval $[-2,2]$ (see Cor.\ref{misiu}). Following that, we give a rough intuition to Th.\ref{polytheorem}, after which we prove the said Theorem. Unlike Th.\ref{attrac}, the proof of Th.\ref{polytheorem} would be heavily based on ideas from Th.\ref{th31} and Th.\ref{th41} - that is, it will be strongly based on describing the flow dynamics (or more precisely, those of the first-return map) by symbolic means.\\

\begin{figure}[h]
\centering
\begin{overpic}[width=0.5\textwidth]{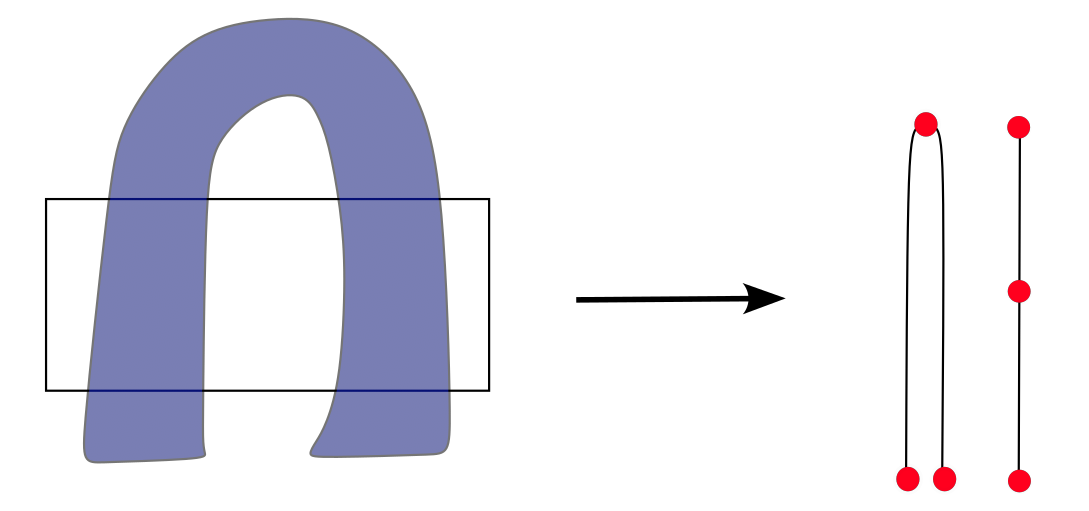}
\put(10,60){$A$}
\put(970,30){$2$}
\put(10,300){$C$}
\put(970,340){$-2$}
\put(800,380){$p_{-2}(0)$}
\put(970,190){$0$}
\put(750,-30){$p_{-2}(2)=p_{-2}(-2)$}
\put(455,60){$B$}
\put(455,300){$D$}
\put(300,0){$H(AB)$}
\put(70,0){$H(CD)$}
\end{overpic}
\caption[Fig31]{Crushing a Smale Horseshoe map $H:ABCD\to\mathbf{R}^2$ to an unimodal map which covers the interval twice - i.e, to $p_{-2}$ on $[-2,2]$.}
\label{crush}
\end{figure}

To begin, we first recall some formalism and facts about polynomial dynamics from Section $1.2$. Specifically, recall we denote the quadratic family by $p_c(x)=x^2+c,c\in[-2,\frac{1}{4}]$, and that $\sigma:\{1,2\}^\mathbf{N}\to\{1,2\}^\mathbf{N}$ denotes the one-sided shift. As explained at the beginning of Section $1.2$, for $ c\in[-2,\frac{1}{4}]$, the polynomial $p_c$ admits a maximal bounded interval $[x_{2,c},x_{1,c}]=V_c$, s.t. the following is satisfied:
\begin{itemize}
    \item $p_c(V_c)\subseteq V_c$.
    \item $0$ is interior to $V_c$.
    \item For $x\not\in V_c$, $|p^n_c(x)|\to\infty$.
\end{itemize}
See the illustration in Fig.\ref{logistic}. Moreover, recall we denote by $I_c$ as the maximal invariant set of $p_c$ in $V_c\setminus\{0\}$. Denoting $[x_{2,c},0)$ by $2$ and $(0,x_{1,c}]$ by $1$, in Section $1.2$ we showed there exists a continuous map $\xi_c:I_c\to\{1,2\}^\mathbf{N}$ s.t. $\xi_c\circ p_c=\sigma\circ\xi_c$ (see Section $1.2$ for more details and references). In particular, for $c=-2$ we have $V_{-2}=[-2,2]$ - and moreover, $p_{-2}$ covers $[-2,2]$ twice, i.e., $p_{-2}([-2,2])=[-2,2]$ (see Lemma \ref{polyhorse}).\\

Now, let $\kappa\subseteq\{1,2\}^\mathbf{N}$ denote the set of all symbols $s\in\{1,2\}^\mathbf{N}$ which are not strictly pre-periodic to the constant $\{1,1,1,...\}$. Per Lemma \ref{polyhorse}, for $c=-2$, $\xi_{-2}:I_{-2}\to\kappa$ is a homeomorphism, satisfying $\xi_{-2}\circ p_{-2}\circ\xi^{-1}_{-2}=\sigma$. Consequentially, we conclude the dynamics of $p_{-2}$ on $[-2,2]$ are a singular model of the Smale-Horseshoe map in the following sense: given a Smale Horseshoe map $H:ABCD\to\mathbf{R}^2$, when we collapse $H$ by a homotopy to an interval map, we get a unimodal map which covers the interval twice - i.e., we get $p_{-2}$ on $[-2,2]$ (up to some conjugation - see the illustration in Fig.\ref{crush}).\\

\begin{figure}[h]
\centering
\begin{overpic}[width=0.45\textwidth]{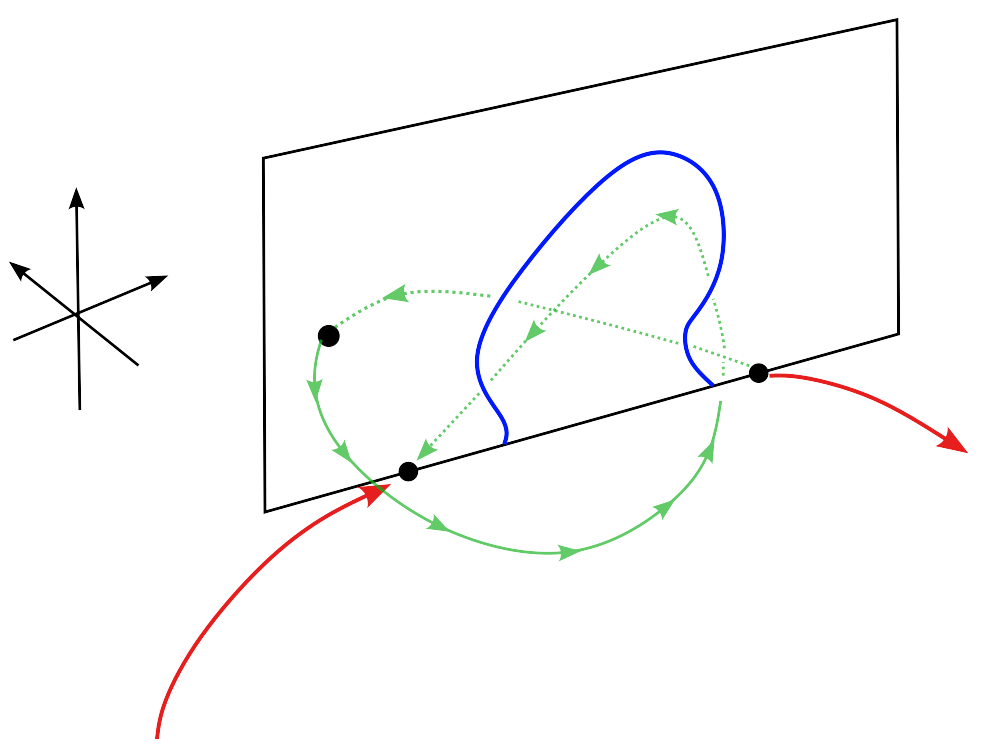}
\put(65,575){$z$}
\put(320,550){$D_1$}
\put(440,430){$\rho$}
\put(265,400){$P_0$}
\put(620,550){$D_2$}
\put(800,480){$U_p$}
\put(-15,505){$y$}
\put(170,470){$x$}
\put(425,240){$P_{In}$}
\put(575,140){$\Theta$}
\put(200,50){$\Gamma_{In}$}
\put(750,330){$P_{Out}$}
\put(910,360){$\Gamma_{Out}$}
\end{overpic}
\caption[Fig31]{The symbolic dynamics at trefoil parameters, generated by $D_1$ and $D_2$, the components of $\overline{U_p}\setminus\rho$. $\Theta$ denotes the bounded heteroclinic trajectory (see Def.\ref{def32}), while $\Gamma_{In}$ and $\Gamma_{Out}$ denote the unbounded heteroclinic trajectories (see Th.\ref{th21}).}
\label{crossec}
\end{figure}

We now recall some facts about the dynamics of the Rössler system at trefoil parameters - namely, Cor.\ref{TR} and Th.\ref{th31}. In more detail, recall there exists a universal cross-section for the flow, the half-plane $U_p$ (see Cor.\ref{TR} and the illustration in Fig.\ref{crossec}), and that we denote the first-return map by $f_p:\overline{U_p}\to\overline{U_p}$ (wherever defined) - by Cor.\ref{TR}, every trajectory of the non-wandering set for Rössler system at trefoil parameters intersects $U_p$ transversely infinitely many times. Per Th.\ref{th31}, there exists a bounded curve $\rho\subseteq \overline{U_p}$, a bounded, $f_p$-invariant set $Q\subseteq\overline{U_p}\setminus\rho$ and a map $\pi:Q\to\{1,2\}^\mathbf{N}$ s.t. the following holds:

\begin{itemize}
    \item $Q$ is a subset of the non-wandering set for the vector field $F_p$.
    \item $\pi\circ f_p=\sigma\circ \pi$.
    \item $f_p$ and $\pi$ are both continuous on $Q$.
    \item $\pi(Q)$ includes every periodic $s\in\{1,2\}^\mathbf{N}$.
    \item If $s\in\{1,2\}^\mathbf{N}$ is periodic of minimal period $k$, $\pi^{-1}(s)$ includes at least one periodic point of minimal period $k$ for $f_p$.
\end{itemize}

See the illustration in Fig.\ref{part1} and Fig.\ref{crossec}. Combined with Lemma \ref{polyhorse} from Section $1.2$, by the discussion above we conclude:
\begin{corollary}\label{misiu}
    Let $p\in P$ be a trefoil parameter, let $Q$ be as above, and let $I_{-2}$ be as in Lemma \ref{polyhorse}. Set $\psi=\xi_{-2}^{-1}\circ\pi$ (with $\xi_{-2}$ as in Lemma \ref{polyhorse} and $\pi$ as above) - then, the function $\psi:Q\to I_{-2}$ is continuous, satisfies  $\psi\circ f_p=p_{-2}\circ\psi$ - and moreover, $\psi(Q)$ includes every periodic orbit for $p_{-2}$ in the interval $V_{-2}$.
\end{corollary}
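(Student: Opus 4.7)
The plan is to construct $\psi$ by pasting together the two symbolic codings already in hand, namely the factor map $\pi : Q \to \{1,2\}^{\mathbf{N}}$ from Theorem \ref{th31} and the conjugacy $\xi_{-2} : I_{-2} \to \kappa$ from Lemma \ref{polyhorse}. With $\psi = \xi_{-2}^{-1} \circ \pi$, the three conclusions (continuity, conjugacy, and realizability of periodic orbits) will become formal consequences — but only after one verifies that $\pi(Q) \subseteq \kappa$, so that $\xi_{-2}^{-1}$ is defined on all of $\pi(Q)$. This inclusion is the genuine content of the argument; the rest is bookkeeping.

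I would dispatch that inclusion first. Recall $\kappa$ excludes exactly those symbols that are strictly pre-periodic to $\{1,1,1,\ldots\}$. Since the constant $\{1,1,1,\ldots\}$ is $\pi(P_{In})$ by Theorem \ref{th31}, any $x \in Q$ with $\pi(x)$ strictly pre-periodic to that symbol would force $f_p^N(x) = P_{In}$ for some $N \geq 1$, putting the full forward trajectory of $x$ on the one-dimensional stable manifold $W^s_{In}$. The intersection $W^s_{In} \cap \overline{U_p}$ is a discrete set of points, and the construction of $Q$ in \cite{I} (which places $Q$ inside $\overline{U_p} \setminus \rho$ and keeps it in the non-wandering set off of $W^s_{In}$) excludes these points. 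This is the main obstacle, since the symbolic statement of Theorem \ref{th31} does not itself rule out pre-periodic codings; one must return to the geometric construction in \cite{I}.

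Once $\pi(Q) \subseteq \kappa$ is secured, continuity of $\psi$ is immediate from the continuity of $\pi$ on $Q$ (Theorem \ref{th31}) together with the fact that $\xi_{-2} : I_{-2} \to \kappa$ is a homeomorphism (Lemma \ref{polyhorse}). The conjugacy relation is then the one-line calculation
\begin{equation*}
\psi \circ f_p \;=\; \xi_{-2}^{-1} \circ \pi \circ f_p \;=\; \xi_{-2}^{-1} \circ \sigma \circ \pi \;=\; p_{-2} \circ \xi_{-2}^{-1} \circ \pi \;=\; p_{-2} \circ \psi,
\end{equation*}
which stitches the two semi-conjugacies together.

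For the final claim on periodic orbits, I would fix a periodic orbit $O = \{y_0,\ldots,y_{k-1}\} \subseteq V_{-2}$ of $p_{-2}$ of minimal period $k$. Applying $\xi_{-2}$ sends $O$ to a periodic orbit $\{s_0,\ldots,s_{k-1}\} \subseteq \kappa$ of the shift $\sigma$ of the same minimal period. Theorem \ref{th31} then produces a periodic point $x_0 \in \pi^{-1}(s_0) \cap Q$ of minimal period $k$ for $f_p$, and the conjugacy established above forces $\psi(f_p^{i}(x_0)) = y_i$ for $0 \leq i < k$, so the entire orbit $O$ lies in $\psi(Q)$. Ranging over all periodic orbits of $p_{-2}$ in $V_{-2}$ yields the corollary.
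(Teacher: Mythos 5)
Your proposal takes essentially the same route as the paper: the paper offers no separate argument for Cor.\ref{misiu} at all, presenting it as an immediate consequence of composing the factor map $\pi$ from Th.\ref{th31} with the homeomorphism $\xi_{-2}$ from Lemma \ref{polyhorse}, which is exactly your continuity-plus-one-line-conjugacy computation and your transport of periodic orbits through the coding. The only place you go beyond the paper is the well-definedness check $\pi(Q)\subseteq\kappa$; that is a legitimate point the paper silently skips, but note that your justification of it leans on the inference that a strictly pre-periodic coding forces $f_p^N(x)=P_{In}$, which presupposes $\pi^{-1}(\{1,1,1,\ldots\})=\{P_{In}\}$ --- an injectivity property of $\pi$ over the constant symbol that neither Th.\ref{th31} nor the paper supplies, so that step still rests on the geometric construction of $Q$ in \cite{I} rather than on anything stated here.
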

\begin{remark}
The function $p_{-2}$ is not special - in fact, we can replace it with any unimodal map $f:[0,1]\to[0,1]$ s.t. $f$ covers $[0,1]$ twice (for example - the tent map). 
\end{remark}
\begin{remark}
    In \cite{MBKPS}, it was observed the dynamics of the first-return maps for the Rössler attractor at homoclinic parameters behaved like those of Misiurewicz maps - that is, unimodal maps for which the critical point lies away from $\overline{\cup_{n\geq1}p^n_c(0)}$ (see \cite{Mis} for more details). As $0$ is a pre-periodic point for $p_{-2}(x)=x^2-2$, Cor.\ref{misiu} is a possible explanation for this observation.
\end{remark}
\begin{figure}[h]
\centering
\begin{overpic}[width=0.5\textwidth]{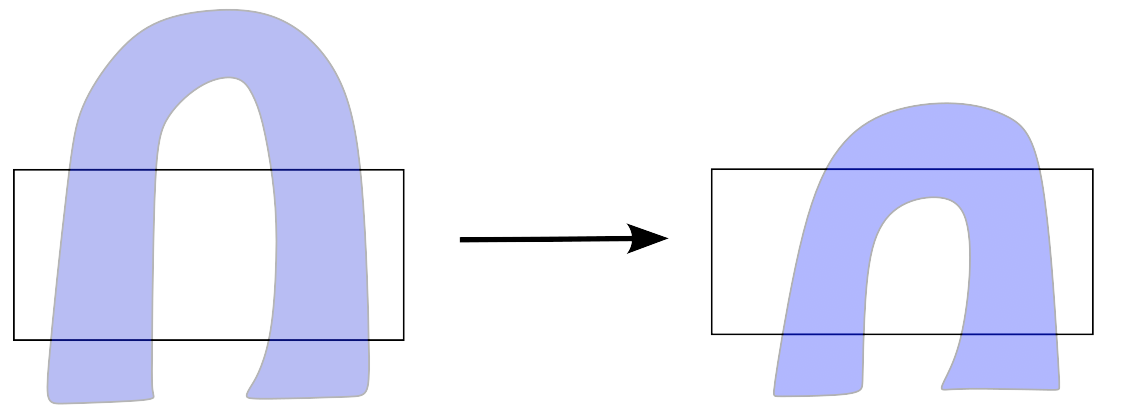}
\put(-10,30){$A$}
\put(960,30){$B$}
\put(-10,230){$C$}
\put(610,30){$A$}
\put(610,230){$C$}
\put(960,230){$D$}
\put(650,-20){$H'(CD)$}
\put(820,-20){$H'(AB)$}
\put(355,30){$B$}
\put(355,230){$D$}
\put(200,-30){$H(AB)$}
\put(20,-30){$H(CD)$}
\end{overpic}
\caption[Fig31]{Isotoping a Smale Horseshoe map $H:ABCD\to\mathbf{R}^2$ to a rectangle map $H':ABCD\to\mathbf{R}^2$ which is not a horseshoe map. $H'$ includes less periodic orbits than $H$..}
\label{crush4}
\end{figure}

Having proven Cor.\ref{misiu}, we are now ready to state prove Th.\ref{polytheorem}. Even though the proof is relatively long due to several technicalities, the motivation behind it is rather simple. To introduce it, recall the motivation behind Cor.\ref{misiu} was that the dynamics of the first-return map at trefoil parameter essentially those of a horseshoe map (see Fig.\ref{fig16}) - therefore, these dynamics can be homotopically collapsed to an interval map which covers the interval twice (see Fig.\ref{crush} for an illustration). It is also easy to see that given a trefoil parameter $p\in P$, the dynamics of its corresponding vector field $F_p$ are structurally unstable under arbitrarily small $C^1$ perturbations of $F_p$ - therefore, as we perturb the dynamics of the Rössler system from those of some trefoil parameter $p$ to those some nearby parameter $v$, we expect the Rössler system corresponding to $v$ to include less periodic dynamics (at least when compared to those at trefoil parameters). Or, in other words, we expect the symbolic dynamics of the first-return map to include less periodic symbols. Therefore, inspired by Cor.\ref{misiu} and Th.\ref{maximal}, we would expect the dynamics of the perturbed flow to be similar to those of $p_c$ on the interval $V_c$ for some $c\in(-2,\frac{1}{4}]$ (see Fig.\ref{crush2}).\\

\begin{figure}[h]
\centering
\begin{overpic}[width=0.5\textwidth]{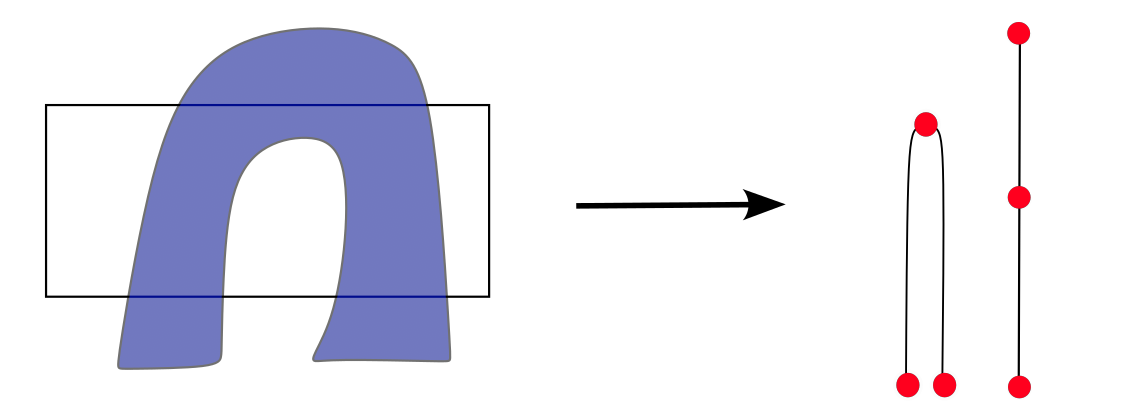}
\put(10,60){$A$}
\put(970,30){$x_2$}
\put(10,300){$C$}
\put(970,340){$x_1$}
\put(780,300){$f(0)$}
\put(970,190){$0$}
\put(750,-30){$f(x_1)=f(x_2)$}
\put(455,60){$B$}
\put(455,300){$D$}
\put(300,0){$H'(AB)$}
\put(70,0){$H'(CD)$}
\end{overpic}
\caption[Fig31]{Collapsing a rectangle map $H':ABCD\to\mathbf{R}^2$ to a unimodal map $f$ which does not cover the interval twice - consequentially, its symbolic dynamics are expected to not include all the periodic symbols in $\{1,2\}^\mathbf{N}$.}
\label{crush2}
\end{figure}

In order to make this heuristic into a rigorous proof, first recall that given any parameter $v\in P$, we denote by $f_v:\overline{U_v}\to\overline{U_v}$ the first-return map generated by the corresponding Rössler system - while $F_v$ always denotes the vector field. Additionally, recall that by Cor.\ref{TR} the trajectory of periodic trajectory in the non-wandering set for $F_v$ intersects transversely with the half-plane $U_v$. Furthermore, recall that as described in Section 1.1, when we vary the parameter $v$ to some $v'$ in the parameter space $P$, the cross-section $\overline{U_v}$ is continuously deformed to the cross-section $\overline{U_{v'}}$ - consequentially, given a trefoil parameter $p\in P$, as we smoothly deform the vector field $F_p$ to $F_v$ through the parameter space, the curve $\rho\subseteq\overline{U_p}$ is continuously deformed to defined above $\rho_v\subseteq \overline{U_v}$ (see the discussion immediately before Cor.\ref{invariant}, and the illustrations in Fig.\ref{part1}). Now, recall that for any parameter $v\in P$, we denote by $I_v$ the collection of initial conditions $x$ in $\overline{U_v}\setminus\rho_v$ satisfying the following:

\begin{itemize}
    \item The trajectory of $x$ is not attracted to $\infty$.
    \item For all $k>0$, $f^k_v(x)\in\overline{U_v}\setminus\rho_v$.
\end{itemize}

Recalling the discussion preceding Cor.\ref{invariant}, we know $\overline{U_v}\setminus\rho_v$ is composed of two components, $D_{1,v}$ and $D_{2,v}$ - both of which vary continuously with $v$ (see the illustration in Fig.\ref{part1}). Therefore, by Cor.\ref{invariant} and Th.\ref{th41} we have the following:

\begin{itemize}
    \item The set $I_v$ is non-empty. Moreover, it is a subset of the non-wandering set for $F_v$ (see Cor.\ref{invariant}).
    \item There exists a map $\pi_v:I_v\to\{1,2\}^\mathbf{N}$ s.t. $\pi_v\circ f_v=\sigma\circ\pi_v$.
    \item Given any periodic $s\in\{1,2\}^\mathbf{N}$ of minimal period $k$, provided $v$ is sufficiently close to the trefoil parameter $p$, $\pi^{-1}_v(s)$ includes $x_s$, a periodic point for $f_v$ of minimal period $k$.
    \item With the notations above, $f_v,...,f^k_v$ are all continuous on $x_s$. Similarly, $\pi_v$ is continuous on $x_s,...,f^{k-1}_v(x_s)$.
\end{itemize}

 \begin{figure}[h]
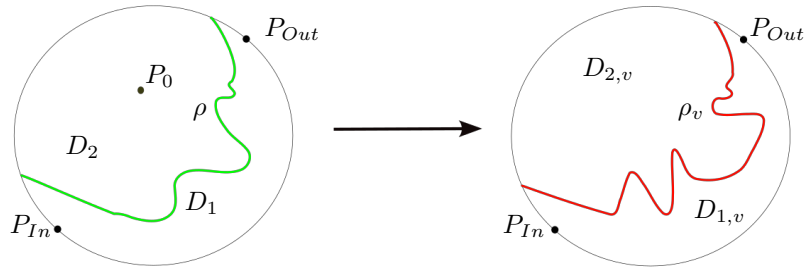

\centering
\begin{overpic}[width=0.6\textwidth]{images/part.png}
\put(0,50){$P_{In}$}
\put(620,50){$P_{In}$}
\put(170,240){$P_0$}
\put(230,200){$\rho$}
\put(840,200){$\rho_v$}
\put(70,150){$D_2$}
\put(720,250){$D_{2,v}$}
\put(220,80){$D_1$}
\put(860,70){$D_{1,v}$}
\put(320,300){$P_{Out}$}
\put(930,300){$P_{Out}$}
\end{overpic}
\caption[Fig32]{the deformation of $U_p$ where $p$ is a trefoil parameter (on the left) to $U_v$ (on the right) - for simplicity, $U_p,U_v$ are drawn as discs rather than half-planes. As can be seen, the curve $\rho$ is deformed to $\rho_v$.}
\label{part1}
\end{figure}

We are now ready to state and prove Th.\ref{polytheorem}. With these ideas in mind, we prove:
\begin{theorem}\label{polytheorem}
Let $p\in P$ denote a trefoil parameter for the Rössler system. Then, there exists a function  $\Pi:P\rightarrow [-2,\frac{1}{4}]$, s.t. setting $d=\Pi(v)$, $v=(a,b,c)\in P$ we have the following:
\begin{itemize}
 
    \item  When $v\to p$, $\Pi(v)\to -2$ - i.e., $\Pi$ is continuous at trefoil parameters.
    \item Let $f_v:\overline{U_v}\to \overline{U_v}$ denote the first-return map corresponding to a given $v\in P$. Then, there exist an $f_v-$invariant $J_v\subseteq I_v$, a bounded, $p_d$-invariant $J_d\subseteq\mathbf{R}$, and a surjective $\zeta_v:J_v\rightarrow J_d$, s.t. $\zeta_v\circ f_v=p_d\circ\zeta_v$.
    \item  Given any $n>0$, provided the parameter $v$ is sufficiently close to $p$, we have the following:
    \begin{enumerate}
        \item  the set $J_v$ includes $\Omega_1,...,\Omega_n$ - $n$ distinct periodic orbits for $f_v$.
        \item Both $f_v,\zeta_v$ are continuous on $\Omega_i$, $1\leq i\leq n$.
        \item $\zeta_v(\Omega_i)=P_i$, $1\leq i\leq n$ are periodic orbits for $p_d$ - moreover, $\Omega_i$ and $P_i$ have the same minimal period.
    \end{enumerate}
    In other words, as $v\to p$, the factor map $\zeta_v$ becomes increasingly more continuous.

\end{itemize}
\end{theorem}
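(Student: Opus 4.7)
The plan is to use the symbolic coding $\pi_v:I_v\to\{1,2\}^{\mathbf{N}}$ of the flow (Corollary \ref{invariant}) and the symbolic coding $\xi_d:I_d\to\{1,2\}^{\mathbf{N}}$ of a quadratic polynomial (Section 1.2), and to match them by choosing $d=\Pi(v)$ via the kneading-theoretic monotonicity of Theorem \ref{maximal}. The key engine is that the set of periodic symbols realised by $\pi_v$ grows under perturbation towards the trefoil parameter $p$ by Theorem \ref{th41}, which will force $\Pi$ to be continuous at $p$ with value $-2$, while the semiconjugacy $\zeta_v$ will be built by matching periodic orbits of the flow with periodic orbits of $p_d$ carrying the same symbol.

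For each $v\in P$ I would let $S_v$ denote the set of $\sigma$-periodic elements of $\pi_v(I_v)$. Because $\xi_{-2}(I_{-2})$ contains every periodic symbol (Lemma \ref{polyhorse}) and the family $\{\xi_d(I_d)\}_d$ is nested and decreasing in $d$ (Theorem \ref{maximal}), the set $\{d\in[-2,1/4]:S_v\subseteq\xi_d(I_d)\}$ is downward-closed and, after taking closure, of the form $[-2,d^{\ast}(v)]$; I set $\Pi(v):=d^{\ast}(v)$. For continuity at a trefoil parameter $p$: at $v=p$ Theorem \ref{th31} forces $S_p$ to contain every periodic symbol, so by Theorem \ref{maximal} $\Pi(p)=-2$. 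For $v\to p$, if $\Pi(v_k)\geq d_0>-2$ along a sequence $v_k\to p$, then Theorem \ref{maximal} produces some periodic $s_0\notin\xi_{d_0}(I_{d_0})$, hence $s_0\notin S_{v_k}$; but Theorem \ref{th41} ensures $s_0\in S_{v_k}$ once $v_k$ is close enough to $p$, a contradiction.

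With $d=\Pi(v)$ fixed, for each $s\in S_v$ Theorem \ref{th41} produces a periodic point $x_s\in\pi_v^{-1}(s)\cap I_v$ of the same minimal period as $s$, and the inclusion $S_v\subseteq\xi_d(I_d)$, together with standard kneading theory, produces a periodic point $y_s\in\xi_d^{-1}(s)\cap I_d$ of the same minimal period. Setting $J_v:=\bigcup_{s\in S_v}\mathrm{orb}_{f_v}(x_s)$, $J_d:=\bigcup_{s\in S_v}\mathrm{orb}_{p_d}(y_s)$ and $\zeta_v(f_v^{k}(x_s)):=p_d^{k}(y_s)$, the matching minimal periods make $\zeta_v$ well-defined, surjectivity onto $J_d$ is automatic, and $\zeta_v\circ f_v=p_d\circ\zeta_v$ follows from $\sigma\circ\pi_v=\pi_v\circ f_v$ and $\sigma\circ\xi_d=\xi_d\circ p_d$. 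The third bullet then reduces to choosing $n$ periodic symbols $s_1,\dots,s_n$: once $v$ is close enough to $p$, Theorem \ref{th41} places each $s_i$ in $S_v$, so by the definition of $\Pi$ also $s_i\in\xi_{\Pi(v)}(I_{\Pi(v)})$, and $\Omega_i:=\mathrm{orb}_{f_v}(x_{s_i})\subseteq J_v$ are the required periodic orbits; continuity of $f_v$ on $\Omega_i$ comes from Theorem \ref{th41} and continuity of $\zeta_v$ on a finite orbit is automatic, while $\zeta_v(\Omega_i)=\mathrm{orb}_{p_d}(y_{s_i})$ is a periodic orbit of $p_d$ of the same minimal period as $s_i$.

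The hard part will be the well-posedness of the choice of $y_s$: for $d>-2$ the map $\xi_d$ need not be injective on $S_v$, and a weakly attracting orbit of $p_d$ could in principle offer several candidates (Corollaries \ref{classification} and \ref{single}). Any consistent choice still yields a semiconjugacy on the periodic data, but in order to extend $\zeta_v$ beyond periodic points one should canonically pick the repelling representative via Corollary \ref{single}. A secondary technical point is to verify that $\{d:S_v\subseteq\xi_d(I_d)\}$ is actually closed, so that $d^{\ast}(v)$ is attained and $S_v\subseteq\xi_{\Pi(v)}(I_{\Pi(v)})$; this would follow from the standard continuity of the kneading invariant $c\mapsto K(c)$ combined with the full kneading-order half of Theorem \ref{maximal}.
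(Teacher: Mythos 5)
Your construction of $\Pi$ and of the factor map follows the same route as the paper: $\Pi(v)$ is the supremum of those $c$ whose quadratic symbolic dynamics dominate the periodic symbols realised by the flow, continuity at a trefoil parameter follows from the same contradiction argument (a symbol forced into $S_v$ by Th.\ref{th41} but excluded from $\xi_{d_0}(I_{d_0})$ by Th.\ref{maximal}), and $\zeta_v$ is essentially $\xi_d^{-1}\circ\pi_v$ restricted to the matched symbols. Up to the choice of $J_v$ (you take only the periodic orbits, the paper takes all of $\pi_v^{-1}(It(d)\cap S_v)$, which is immaterial for the statement), Stage I and the algebraic part of Stage II are sound.

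The genuine gap is your claim that ``continuity of $\zeta_v$ on a finite orbit is automatic.'' It is not: the assertion to be proved is continuity of $\zeta_v$ \emph{as a map on $J_v$} at the points of $\Omega_i$, and $J_v$ is an infinite set whose periodic points can accumulate on $\Omega_i$. Concretely, take $y_k\to x$ in $J_v$ with $x\in\Omega_i$; by Th.\ref{th41} the symbols $\pi_v(y_k)$ converge to $s=\pi_v(x)$, but you still need $\xi_d^{-1}(\pi_v(y_k))\to\xi_d^{-1}(s)$, and this fails precisely when $\xi_d^{-1}(s)$ is a nontrivial interval or when the target periodic point sits on the boundary of a Fatou component of $p_d$ (for $d>-2$ the polynomial $p_d$ is not expansive, so $\xi_d$ is not injective and $\xi_d^{-1}$ need not be continuous). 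This is the content of the paper's Lemma \ref{conti}, which is the real work of Stage II: one invokes Th.\ref{classification} to see that at most one orbit of $p_d$ is non-repelling, the No Wandering Domain Theorem to see that at most two further periodic orbits can lie on the boundary of the Fatou cycle, and Cor.\ref{single} to conclude that every \emph{other} periodic point is a singleton fibre of $\xi_d$, at which $\xi_d^{-1}$ is continuous. The price is that out of any $n$ chosen symbols one can only certify continuity at $n-3$ of the resulting orbits, and the theorem for $n$ orbits is recovered by starting from $n+3$ symbols. You gesture at Cor.\ref{single} in your final paragraph, but only as a device for extending $\zeta_v$ beyond periodic points; in fact it (together with the Fatou-boundary exclusion) is indispensable already for item (2) of the third bullet, and without it that item is unproved.
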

Before moving on to the proof, let us remark the formalism above has the following meaning - the closer a given Rössler system is to the dynamics of a trefoil parameter, the more its behavior around periodic trajectories looks like that of a deformed, suspended quadratic polynomial. In other words, the smaller $||v-p||$ is (where $p$ is a trefoil parameter), the better the flow dynamics on the non-wandering set are described by the semi-flow generated by suspending $p_d(x)=x^2+d$, $d=\Pi(v)$ - see the illustrations at Fig.\ref{match1} and Fig.\ref{template}. 
\begin{proof}
From now on $p\in P$ would always denote a trefoil parameter for the Rössler system - while $F_p$ always denoting the corresponding vector field. Similarly, given any $v\in P$ (not necessarily a trefoil parameter), we will always denote by $F_v$ the corresponding vector field (see Eq.\ref{Field}). Additionally, from now on $\sigma:\{1,2\}^\mathbf{N}\to\{1,2\}^\mathbf{N}$ would always denote the one-sided shift. Before giving a sketch of proof, we first make some general remarks about the symbolic dynamics for the vector field $F_v$ (as defined at the discussion above).\\

To begin, for every $v\in P$, set $S_v=\pi_v(I_v)$, and define $Per(v)\subseteq S_v$ as the set of periodic symbols in $S_v$. Recall that by Cor.\ref{invariant}, for every $v$ the constant $\{1,1,1...\}$ is in $Per(v)$ - i.e., for every $v\in P$, $Per(v)\ne\emptyset$. Moreover, by Th.\ref{th41} we immediately conclude:

\begin{corollary}
    \label{dista1}
    Let $p\in P$ be a trefoil parameter, and let $s\in\{1,2\}^\mathbf{N}$ be some periodic symbol. Then, $Per(v)\ne\emptyset$ - and provided $v$ is sufficiently close to $p$, $s\in Per(v)$.
\end{corollary}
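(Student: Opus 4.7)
The plan is to read Corollary \ref{dista1} as a direct consequence of two results already established in the paper: Corollary \ref{invariant}, which guarantees the symbolic dynamics is never vacuous, and Theorem \ref{th41}, which supplies the persistence of periodic symbols under small parameter perturbations. The statement essentially repackages these two facts in the language of $Per(v)$, so my proof would be short and structural rather than computational.

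First I would dispose of the non-emptiness of $Per(v)$. By Corollary \ref{invariant}, for every $v\in P$ the fixed point $P_{In}$ lies in $I_v$, and since $f_v(P_{In})=P_{In}$ one has $\pi_v(P_{In})=\{1,1,1,\dots\}$. This constant sequence is periodic (of minimal period $1$) and it is the image of a periodic point of $f_v$, so by definition it sits in $Per(v)$. Hence $Per(v)\ne\emptyset$ unconditionally in $P$.

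For the second assertion, let $s\in\{1,2\}^{\mathbf{N}}$ be periodic of minimal period $k$. Apply Theorem \ref{th41} to the trefoil parameter $p$ and the symbol $s$: there exists $\epsilon>0$ such that whenever $\|v-p\|<\epsilon$, the fibre $\pi_v^{-1}(s)$ contains a point $x_s$ which is periodic for $f_v$ of minimal period $k$. In particular, $x_s\in I_v$, so $s=\pi_v(x_s)\in S_v$; and since $f_v^k(x_s)=x_s$ combined with the conjugacy $\pi_v\circ f_v=\sigma\circ\pi_v$ gives $\sigma^k(s)=s$, so $s$ is a periodic element of $S_v$. Therefore $s\in Per(v)$, which is what had to be shown.

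There is essentially no obstacle here: the only thing to be careful about is the bookkeeping that $\pi_v(x_s)=s$ is itself periodic of period dividing $k$ (which is automatic from $\pi_v\circ f_v=\sigma\circ\pi_v$ and $f_v^k(x_s)=x_s$), so no independent argument is needed to see that $s$, and not merely some iterate of it, lies in $Per(v)$. The corollary is thus best presented as a direct translation of Theorem \ref{th41} into the $Per(v)$ notation introduced just above its statement.
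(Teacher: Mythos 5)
Your proof is correct and follows exactly the route the paper takes: the paper derives this corollary "immediately" from Corollary \ref{invariant} (which puts the constant sequence $\{1,1,1,\dots\}$ in $Per(v)$ for every $v\in P$, via the fixed point $P_{In}\in D_{1,v}$) together with Theorem \ref{th41} (which places any prescribed periodic symbol $s$ in $Per(v)$ once $v$ is close enough to the trefoil parameter $p$). Your additional bookkeeping that $\pi_v(x_s)=s$ is itself periodic is a harmless elaboration of the same argument.
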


\begin{figure}[h]
\centering
\begin{overpic}[width=0.25\textwidth]{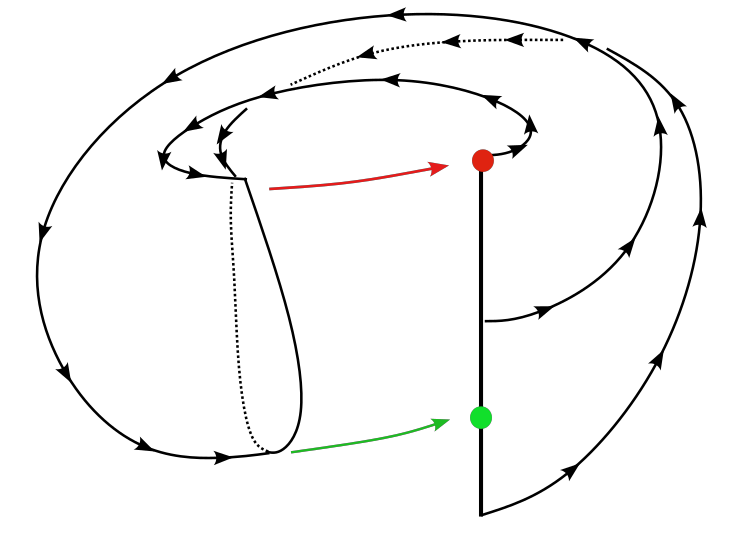}
\end{overpic}
\caption[Fig31]{An illustration of the semiflow created by suspending an interval and folding it into a unimodal map. For an alternative illustration, see Fig.\ref{template}.}
\label{match1}
\end{figure}

In order to give a sketch of proof for Th.\ref{polytheorem}, we now recall some notions from Section $1.2$. Given a parameter $c\in[-2,\frac{1}{4}]$, recall the polynomial $p_c(x)=x^2+c$ there exists a maximal bounded interval $V_c=[x_{2,c},x_{1,c}]$, $x_{2,c}<0<x_{1,c}$ s.t. $p_c(V_c)\subseteq V_c$ (i.e., $p_c$ folds $V_c$ on itself) - and that for $x\not\in V_c$, $p^n_c(x)\to\infty$. Additionally, recall we denote by $I_c$ the maximal invariant set of $p_c$ in $V_c\setminus\{0\}$, and that we defined a symbolic coding by denoting the interval $(0,x_{1,c}]$ with $1$ and $[x_{2,c},0)$ by $2$ - consequentially, there exists a continuous $\xi_c:I_c\to\{1,2\}^\mathbf{N}$ s.t. $\xi_c\circ p_c=\sigma\circ\xi_c$. As an analogue of the set $S_v$, we define $It(c)=\xi_c(I_c)$ - that is, $It(c)$ is the collection of symbolic dynamics generated by $p_c$ on $I_c$. Finally, for $c\in[-2,\frac{1}{4}]$ define $Per(c)\subseteq It(c)$ to be the set of periodic symbols in $It(c)$. By Th.\ref{maximal}, we know that given any periodic $s\in\{1,2\}^\mathbf{N}$, provided $c$ is sufficiently close to $-2$ we have $s\in Per(c)$.\\

Having introduced the sets $S_v,Per(v)$, $It(c)$ and $Per(c)$ for $v\in P$ and $c\in[-2,\frac{1}{4}]$ (respectively), we now give a sketch of proof for Th.\ref{polytheorem}. We will prove Th.\ref{polytheorem} in two stages:
\begin{itemize}
    \item At Stage $I$, we define and analyze the function $\Pi:P\to[-2,\frac{1}{4}]$. In more detail, given $v\in P$, we will define $\Pi(v)$ by considering the supremum over $c\in[-2,\frac{1}{4}]$ s.t.  $Per(v)\subseteq Per(c)$. As we will see, by our definition it will follow $\Pi$ is continuous on regions of structural stability in $P$.
    \item At Stage $II$ we prove that if $\Pi(v)=d$, there exists a factor map $\zeta_v$ between two invariant subsets $J_v,J_d$, for the first-return map $f_v$ and $p_d$ (respectively). Moreover, we will show that given $n>0$, provided the parameter $v$ is sufficiently close to a trefoil parameter $p$, $\zeta_v$ is continuous at least around $n$-distinct periodic orbits in $J_v$ for $f_v$. This will imply Th.\ref{polytheorem}, thus concluding the proof.
\end{itemize}

\begin{figure}[h]
\centering
\begin{overpic}[width=0.25\textwidth]{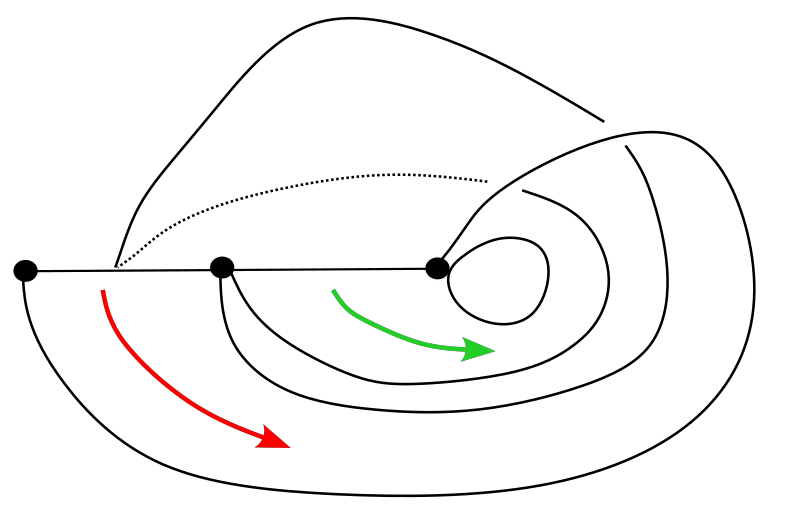}
\end{overpic}
\caption[Fig31]{An alternative illustration of the semiflow created by suspending an interval and folding it into a unimodal map. Here we have a branch line torn in two at the critical point, with the two strands glued together to create a unimodal map.}
\label{template}
\end{figure}

\subsection{\textbf{Stage $I$ - defining $\Pi:O\to[-2,\frac{1}{4}]$:}}

As stated above, we prove Th.\ref{polytheorem} by optimally matching the symbolic dynamics of a given $Per(v)$ with the $Per(d)$ (for some parameters $v\in P$ and $d\in[-2,\frac{1}{4}]$). To do so, given a parameter $v\in P$ first define the function $d: P\to[-2,\frac{1}{4}]$ by $d(v)=d=\sup\{c\in[-2,\frac{1}{4}]|Per(v)\subseteq Per(c)\}$ - since $Per(v)\ne\emptyset$ by Cor.\ref{dista1}, we always have $\frac{1}{4}\geq d(v)\geq-2$. Let us note that when $p\in P$ is a trefoil parameter for the Rössler system, by Cor.\ref{misiu} and Th.\ref{maximal} we have $d(p)=-2$. With these ideas in mind, given $v\in P$ we define $\Pi(v)$ as follows:

\begin{itemize}
    \item For any trefoil parameter $p\in P$, inspired by the discussion above we set $\Pi(p)=-2$.
    \item Assume $v\in P$ is not a trefoil parameter. In that case, set $\Pi(v)=d(v)$.

\end{itemize}

Recalling Cor.\ref{misiu}, we now prove the following Lemma, with which we conclude Stage $I$:
\begin{lemma}
    \label{uniqueness}
Given any $\{v_n\}_n\subseteq P$ s.t. $v_n\to p$, we have $\Pi(v_n)\to -2$. That is, $\Pi$ is continuous at trefoil parameters for the Rössler system.
\end{lemma}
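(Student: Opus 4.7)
The plan is to argue by contradiction. If $\Pi(v_n)\not\to -2$, then after passing to a subsequence I may fix some $\epsilon_0>0$ with $\Pi(v_n)\geq -2+\epsilon_0$ for every $n$. The contradiction will come from exhibiting a single periodic symbol $s^\ast\in\{1,2\}^{\mathbf{N}}$ which, on the one hand, must belong to $Per(v_n)$ for all large $n$, and on the other hand cannot lie in any $Per(c)$ with $c$ bounded away from $-2$.

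The first step is to decode the lower bound $\Pi(v_n)\geq -2+\epsilon_0$ as an inclusion of symbol sets. For each $v\in P$ write $A(v)=\{c\in[-2,\frac{1}{4}]:Per(v)\subseteq Per(c)\}$. By Th.~\ref{maximal}, $It(c)\subseteq It(d)$ whenever $-2\leq d\leq c$, which at once gives $Per(c)\subseteq Per(d)$ under the same hypothesis; hence $A(v)$ is down-closed in $[-2,\frac{1}{4}]$. In particular, $\sup A(v_n)\geq -2+\epsilon_0$ forces $c_0=-2+\epsilon_0/2$ to lie in $A(v_n)$, so that $Per(v_n)\subseteq Per(c_0)$ for every $n$.

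The second step produces the bad symbol. Since $c_0>-2$, Th.~\ref{maximal} supplies a periodic $s^\ast\in\{1,2\}^{\mathbf{N}}$ with $s^\ast\notin It(c_0)$, and hence $s^\ast\notin Per(c_0)$. Now Cor.~\ref{dista1}, which is the periodic-symbol version of the perturbation theorem Th.~\ref{th41}, asserts that for $v_n$ sufficiently close to the trefoil parameter $p$ one must have $s^\ast\in Per(v_n)$. Combined with the inclusion $Per(v_n)\subseteq Per(c_0)$ established above, this yields $s^\ast\in Per(c_0)$, contradicting the choice of $s^\ast$. Hence $\Pi(v_n)\to -2$.

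The main obstacle I anticipate is purely bookkeeping rather than dynamical: one has to read Th.~\ref{maximal} in the contrapositive direction to obtain the down-closedness of $A(v)$, and one has to observe that once $c_0$ is fixed the bad symbol $s^\ast$ can be chosen independently of $n$, so that the ``sufficiently close'' regime furnished by Cor.~\ref{dista1} applies uniformly along the tail of the sequence. Beyond these two points, no additional information about the R\"ossler flow beyond what Th.~\ref{th41} already encodes is needed.
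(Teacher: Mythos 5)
Your proof is correct and uses exactly the same two ingredients as the paper's own argument: the monotonicity and maximality statements of Th.~\ref{maximal} (which give, for each $c_0>-2$, a periodic symbol absent from $Per(c_0)$) and Cor.~\ref{dista1}/Th.~\ref{th41} (which forces that symbol into $Per(v_n)$ for large $n$). The only difference is presentational — you run it as a contradiction via the down-closed set $A(v)$, while the paper argues directly that $\Pi(v_n)$ is eventually bounded above by constants $c_\omega$ that can be taken arbitrarily close to $-2$ — so the two proofs are essentially the same.
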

\begin{proof}
To begin, recall that given any periodic $s\in\{1,2\}^\mathbf{N}$, by Th.II.3.2 in \cite{DeMVS} (i.e., Th.\ref{maximal} in Section 1.2) there exists some $c_s\in(-2,\frac{1}{4}]$ s.t. for $c<c_s$, $s\in Per(c)$ - and moreover, the same Theorem also implies $p_{-2}$ is dynamically maximal w.r.t. the quadratic family. That is, given any $c\in[-2,\frac{1}{4}]$ s.t. $Per(c)$ includes every periodic symbol in $\{1,2\}^\mathbf{N}$, we have $c=-2$ - and additionally, when $c\to-2$, the number of distinct periodic symbols in $Per(c)$ increases as well.\\

Now, let $p\in P$ be a trefoil parameter, and consider a sequence $v_n\to p$. By Th.\ref{th41} it follows that for every periodic $s\in\{1,2\}^\mathbf{N}$ and any sequence $\{v_n\}_n$ s.t $v_n\to p$, there exists some $k_s$ s.t. for $n>k_s$, $s\in Per({v_n})$. Now, write $c_s=\sup\{c\in[-2,\frac{1}{4}]|s\in Per(c)\}$ - by the definition of $\Pi$ it now follows that for every $n>k_s$, $-2\leq\Pi(v_n)\leq c_s$. By Th.\ref{maximal} we know $c_s>-2$ - which implies there exists some periodic $\omega\in\{1,2\}^\mathbf{N}$, $\omega\ne s$, s.t. $c_\omega=\sup\{c\in[-2,\frac{1}{4}]|\omega\in Per(c)\}$ satisfies $c_\omega<c_s$. Again, using the same argument, it now follows there exists some $k_\omega$ s.t. for $n>k_\omega$, $\omega\in Per(v_n)$ - which, using similar arguments, again implies that for $n\geq k_\omega$, $-2\leq\Pi(v_n)\leq c_\omega<c_s$.\\

Or, in other words, we have just shown that the infimum $\inf\{c\in[-2,\frac{1}{4}]|\exists k>0,\forall n>k, \Pi(v_n)<c\}$ is precisely $-2$ - consequentially, when $v_n\to p$, $\Pi(v_n)\to-2$ and Lemma \ref{uniqueness} now follows.
\end{proof}
\subsection{\textbf{Stage $II$ - concluding the proof of Th.\ref{polytheorem}:}}
Having defined the function $\Pi$ and analyzed its properties, we are now ready to conclude the proof of Theorem \ref{polytheorem} - namely, we now prove that if $d=\Pi(v)$, $v\in P$, and if $f_v:\overline{U_v}\to \overline{U_v}$ is the first-return map corresponding to the Rössler system at $v$ (wherever defined in $\overline{U_v}$), there exists a factor map between the dynamics of $f_v$ and those of $p_d(x)=x^2+d$ on the invariant interval $V_d$.\\

To begin, first recall we denote by $\sigma:\{1,2\}^\mathbf{N}\to\{1,2\}^\mathbf{N}$ the one-sided shift. Additionally, recall that given a parameter $v\in P$, we denote by $S_v\subseteq\{1,2\}^\mathbf{N}$ the set of all symbols generated by $f_v:\overline{U_v}\to \overline{U_v}$. That is, $S_v=\pi_v(I_v)$, with $\pi_v$ and $I_v\subseteq \overline{U_v}$ as before - i.e., $\pi_v:I_v\to S_v$ is surjective, and satisfies $\pi_v\circ f_v=\sigma\circ\pi_v$ (see Th.\ref{th41} or the discussion immediately before Th.\ref{polytheorem}). Now, recall the coding map $\xi_d:I_d\to\{1,2\}^\mathbf{N}$, $d\in[-2,\frac{1}{4}]$ defined at the end of Section $1.2$ (and in particular, recall the set of itineraries $It(d)=\xi_d(I_d)$ defined at the beginning of Stage $I$) - as shown in Section $1.2$, , $\xi_d:I_d\to It(d)$ is continuous and surjective, and satisfies $\xi_d\circ p_d=\sigma\circ\xi_d$.\\

Now, given $v\in P,d=\Pi(v)$, we define the set $T_v=It(d)\cap S_v$ - or, written differently, $T_v=\xi_d(I_d)\cap\pi_v(I_v)$. That is, $T_v$ is the set of all symbols in $\{1,2\}^\mathbf{N}$ generated by both the first-return map $f_v$ and the quadratic polynomial $p_d$ - it is easy to see $T_v$ is invariant under the one-sided shift $\sigma:\{1,2\}^\mathbf{N}\to\{1,2\}^\mathbf{N}$. Let us remark that by Th.\ref{th41} and the discussion at Section $1.2$ the set $T_v$ is never empty - to see why, recall that by Th.\ref{th41} (and Remark \ref{notemp}) for all $v\in P$ and $d\in[-2,\frac{1}{4}]$, the constant $\{1,1,1...\}$ lies in both $S_v,It(d)$ - consequentially, the constant $\{1,1,1...\}$ lies in $T_v$. Now, define $J_v=\pi^{-1}_v(T_v)$ - by definition, $J_v\subseteq I_v\subseteq \overline{U_v}$, and $J_v$ is $f_v-$invariant. By Th.\ref{th41} and Th.\ref{maximal} we immediately conclude:
\begin{corollary}
    \label{corjf}
    Let $p\in P$ be a trefoil parameter for the Rössler system, let $s\in\{1,2\}^\mathbf{N}$ be periodic of minimal period $k$. Then, provided a parameter $v\in P$ is sufficiently close to $p$ we have the following:
    \begin{itemize}
        \item $s\in T_v$, and $\pi^{-1}_v(s)$ includes $x_s$, a periodic point of minimal period $k$ for $f_v$ (in particular, $x_s\in J_v$).
        \item $f_v$ is continuous at the orbit $\{x_s,...,f^{k-1}(x_s)\}$.
        \item $\pi_v$ is continuous on the orbit $\{x_s,...,f^{k-1}(x_s)\}$.
    \end{itemize}
    
\end{corollary}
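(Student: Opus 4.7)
The plan is to deduce Corollary \ref{corjf} by combining three results already established: Th.\ref{th41}, which produces the periodic point $x_s$ together with the required continuity; Lemma \ref{uniqueness}, which tells us $\Pi(v)\to -2$ as $v\to p$; and Th.\ref{maximal}, which controls how the periodic itinerary set $Per(c)$ varies with $c \in [-2,\frac{1}{4}]$. No new dynamical content needs to be introduced---the only task is to choose a neighborhood of $p$ small enough that all three items in the statement hold simultaneously, and to verify that the periodic point furnished by Th.\ref{th41} actually lands in $J_v = \pi_v^{-1}(T_v)$ and not merely in $I_v$.

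First I would apply Th.\ref{th41} to the given periodic symbol $s$ of minimal period $k$. This produces some $\epsilon_1 > 0$ such that whenever $\|v-p\| < \epsilon_1$, the preimage $\pi_v^{-1}(s) \subseteq I_v$ contains a periodic point $x_s$ of minimal period $k$ for $f_v$, and both $f_v$ and $\pi_v$ are continuous along the entire orbit $\{x_s, f_v(x_s), \ldots, f_v^{k-1}(x_s)\}$. This already delivers the second and third bullets of the corollary, and also gives $s \in \pi_v(I_v) = S_v$ together with $x_s \in I_v$.

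The remaining piece is to upgrade $x_s \in I_v$ to $x_s \in J_v$, equivalently to show $s \in T_v = It(\Pi(v)) \cap S_v$. I would set $c_s = \sup\{c \in [-2,\frac{1}{4}] : s \in Per(c)\}$; by Th.\ref{maximal} we have $c_s > -2$, and by the monotonicity $It(c) \subseteq It(d)$ for $d \leq c$ recorded in the same theorem, $s \in Per(d) \subseteq It(d)$ for every $d < c_s$. Invoking Lemma \ref{uniqueness} now yields $\epsilon_2 > 0$ with $\Pi(v) < c_s$ whenever $\|v-p\| < \epsilon_2$; for such $v$ we conclude $s \in It(\Pi(v))$, hence $s \in T_v$ and $x_s \in J_v$. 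Setting $\epsilon = \min\{\epsilon_1, \epsilon_2\}$ finishes the proof.

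The only spot deserving care---and the closest thing here to a genuine obstacle---is keeping the direction of monotonicity in Th.\ref{maximal} straight: we need the set $\{c : s \in Per(c)\}$ to be downward closed in $[-2,\frac{1}{4}]$ so that shrinking $\Pi(v)$ toward $-2$ places us inside it. This is exactly the inclusion $It(c) \subseteq It(d)$ for $d \leq c$ in Th.\ref{maximal}, after which the argument is a bookkeeping exercise identical in spirit to the proof of Lemma \ref{uniqueness}.
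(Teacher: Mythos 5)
Your proposal is correct and follows the same route the paper intends: the paper presents Cor.\ref{corjf} as an immediate consequence of Th.\ref{th41} (which supplies $x_s$ and the continuity of $f_v$ and $\pi_v$ along its orbit) together with Th.\ref{maximal} and Lemma \ref{uniqueness} (which force $\Pi(v)$ below $c_s$ so that $s\in It(\Pi(v))$, hence $s\in T_v$ and $x_s\in J_v$) --- exactly the two steps you carry out. Your write-up merely makes explicit the bookkeeping the paper leaves implicit, including the correct use of the monotonicity $It(c)\subseteq It(d)$ for $d\leq c$.
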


Having defined the set $J_v$ for the first-return map $f_v$, we now turn to define the corresponding set $J_d\subseteq I_d$ for $p_d$, where $d=\Pi(v)$. To do so, consider $\xi^{-1}_d(T_v)$ - let $J_d$ denote some $p_d-$invariant set in $\xi^{-1}_d(T_v)$, s.t. $J_d$ satisfies the following three properties:

\begin{itemize}
    \item  $\xi_d(J_d)=T_v$.   
    \item Every component of $J_d$ is a singleton - i.e., $\xi_d:J_d\to T_F$ is a bijection, satisfying $\xi_d\circ p_d=\sigma\circ\xi_d$.
    \item Whenever $s\in T_v$ is periodic of minimal period $k$ for the one-sided shift, $\xi^{-1}_d(s)\cap J_d=\{y_s\}$ is a periodic point of minimal period $k$ for $p_d$.
\end{itemize}

Now, given a parameter $v\in P$, $d=\Pi(v)\in[-2,\frac{1}{4}]$ we define $\zeta_v: J_v\to J_d$ by $\zeta_v=\xi^{-1}_d\circ\pi_v$. Recalling $f_v:\overline{U_v}\to \overline{U_v}$ denotes the first-return map for the Rössler system corresponding to $v$, we immediately conclude:
\begin{corollary}
    \label{corzeta} For any parameter $v\in P$ for the Rössler system, set $d=\Pi(v)$. Then, the function $\zeta_v:J_v\to J_d$ defined above satisfies:

    \begin{itemize}
        \item $\zeta_v$ is surjective.
        \item $\zeta_v\circ f_v=p_d\circ\zeta_v$.

    \end{itemize}
\end{corollary}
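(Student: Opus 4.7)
The plan is to prove Corollary~\ref{corzeta} by diagram-chasing, using the conjugacies already established for $\pi_v$ and $\xi_d$ together with the way $J_v$, $J_d$, and $T_v$ were constructed. First I would unpack the three identities that are already in place: $\pi_v\circ f_v=\sigma\circ\pi_v$ on $I_v$ (from Th.~\ref{th41}), $\xi_d\circ p_d=\sigma\circ\xi_d$ on $I_d$ (from Section~$1.2$), and the fact that by construction $\xi_d|_{J_d}\colon J_d\to T_v$ is a genuine bijection onto $T_v$, with $J_d$ itself $p_d$-invariant. Since $\xi_d$ on the full set $I_d$ need not be injective, the essential observation is that the symbol ``$\xi_d^{-1}$'' appearing in the definition $\zeta_v=\xi_d^{-1}\circ\pi_v$ is meaningful only as the single branch landing in $J_d$, and that this branch inverts the polynomial conjugacy to give $p_d\circ\xi_d^{-1}=\xi_d^{-1}\circ\sigma$ on $T_v$.

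For surjectivity, I would take $y\in J_d$, set $s=\xi_d(y)\in T_v$, use $T_v\subseteq S_v=\pi_v(I_v)$ to pick $x\in I_v$ with $\pi_v(x)=s$, and note that $\pi_v(x)\in T_v$ puts $x\in J_v=\pi_v^{-1}(T_v)$ with $\zeta_v(x)=y$. Before checking the conjugacy, I would verify $J_v$ is $f_v$-invariant: for $x\in J_v$, the identity $\pi_v(f_v(x))=\sigma(\pi_v(x))\in T_v$, using the $\sigma$-invariance of $T_v$ noted explicitly in the text, places $f_v(x)$ back into $J_v$. The semi-conjugacy $\zeta_v\circ f_v=p_d\circ\zeta_v$ then drops out by chaining: $\zeta_v(f_v(x))=\xi_d^{-1}(\sigma(\pi_v(x)))=p_d(\xi_d^{-1}(\pi_v(x)))=p_d(\zeta_v(x))$, where the middle equality is precisely the inverted polynomial conjugacy on $T_v$.

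The proof is therefore really a bookkeeping argument rather than one with a genuine analytic obstacle; the only point requiring genuine care is checking that the single-valued branch of $\xi_d^{-1}$ picked out by $J_d$ is simultaneously compatible with the $\sigma$-invariance of $T_v$ and the $p_d$-invariance of $J_d$. Once that consistency is confirmed, surjectivity and the factor-map identity follow formally from the set-theoretic definitions already laid down in the preceding paragraphs of the excerpt, with no further appeal to the geometry of the flow or to the properties of the quadratic family.
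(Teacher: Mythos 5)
Your proposal is correct and follows the same route as the paper, which simply declares the corollary an immediate consequence of the definitions of $T_v$, $J_v$, $J_d$ and $\zeta_v=\xi_d^{-1}\circ\pi_v$; you have merely written out the routine diagram chase, including the one point genuinely worth checking (that the branch of $\xi_d^{-1}$ landing in the $p_d$-invariant set $J_d$ satisfies $\xi_d^{-1}\circ\sigma=p_d\circ\xi_d^{-1}$ on $T_v$, which follows from injectivity of $\xi_d$ on $J_d$).
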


In other words, $\zeta_v$ gives us a factor map between the flow dynamics on $J_v$ (or more precisely, the dynamics of the first-return map $f_v$ on $J_v$), and those of the polynomial $p_d$ on its invariant set $J_d$. Since $I_d$ is a subset of a closed, bounded interval (see the discussion in Section $1.2$), so is $J_d$ - i.e., $J_d$ is always bounded. Now, having defined a factor map $\zeta_v:J_v\to J_d$, $d=\Pi(v)$, all that remains to conclude the proof of Th.\ref{polytheorem} is to prove the following: that given any $n>0$, provided the parameter $v$ is sufficiently close to a trefoil parameter $p$, $\zeta_v$ maps $n-$periodic orbits for $f_v$ to $n$ periodic orbits for $p_d$ (without changing their minimal periods) - and moreover, $\zeta_v$ is also continuous at the said $n$ periodic orbits.\\

We will do so in two steps - first, we will prove Lemma \ref{conti2}, where we show that given any $n>0$, provided $v$ is sufficiently close to $p$, then $J_v$ and $J_d$ both include at least $n$ periodic orbits which are mapped on one another by $\zeta_v$. Second, we will prove Lemma \ref{conti} by proving $\zeta_v$ is continuous at least on $n-3$ of those orbits - from which Th.\ref{polytheorem} would follow. We therefore begin with the following fact:
\begin{lemma}
    \label{conti2} 
Let $p\in P$ be a trefoil parameter for the Rössler system. Then, given any $v\in P$ and every collection of distinct, periodic symbols $s_1,...,s_n\in\{1,2\}^\mathbf{N}$ with respective minimal periods $k_1,...,k_n$, provided $v$ is sufficiently close to $p$, setting $d=\Pi(F)$ we have the following:

\begin{itemize}
    \item $s_1,...,s_n\in T_v$.
    \item  $J_v$ contains $n-$periodic orbits for $f_v$ of minimal periods $k_1,...,k_n$, denoted by $\Omega_1,...,\Omega_n$. Moreover, $f_v$ is continuous at $\Omega_1,...,\Omega_n$.
    \item $J_d$ includes $n$ periodic orbits for $p_d$ of minimal periods $k_1,...,k_n$, denoted by $P_1,...,P_n$. 
    \item For every $1\leq i\leq n$, $\zeta_v(\Omega_i)=P_i$.
\end{itemize}
\end{lemma}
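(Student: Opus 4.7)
The plan is to combine two ingredients already established in the paper: Th.\ref{th41}, which guarantees that each periodic symbol is realized by a periodic point of $f_v$ of the correct minimal period once $v$ is close enough to $p$, and Lemma \ref{uniqueness}, which forces $d=\Pi(v)\to -2$ and hence, via Th.\ref{maximal}, forces $p_d$ to admit any prescribed finite list of periodic symbols. Intersecting finitely many ``sufficiently close'' neighborhoods of $p$ then yields the simultaneous existence of the desired periodic orbits on both the flow side and the polynomial side.

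First, I would apply Th.\ref{th41} individually to each $s_i$, obtaining some $\epsilon_i>0$ such that whenever $\|v-p\|<\epsilon_i$ there is a periodic point $x_{s_i}\in\pi^{-1}_v(s_i)\subseteq I_v$ of minimal period $k_i$ for $f_v$, with both $f_v$ and $\pi_v$ continuous along the orbit $\Omega_i=\{x_{s_i},f_v(x_{s_i}),\ldots,f^{k_i-1}_v(x_{s_i})\}$. Setting $\epsilon'=\min_i\epsilon_i>0$, all of these statements hold simultaneously for $\|v-p\|<\epsilon'$, so in particular $s_1,\ldots,s_n\in S_v$. Second, by Th.\ref{maximal} each $s_i$ lies in $Per(c)$ for every $c$ in an interval $[-2,c_{s_i})$ with $c_{s_i}>-2$; put $c^*=\min_i c_{s_i}>-2$. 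By Lemma \ref{uniqueness}, $\Pi(v)\to -2$ as $v\to p$, so I can shrink to some $\epsilon\le\epsilon'$ such that $d=\Pi(v)<c^*$ whenever $\|v-p\|<\epsilon$. This gives $s_i\in Per(d)\subseteq It(d)$ for every $i$, and combined with $s_i\in S_v$ from the previous step, $s_i\in T_v=It(d)\cap S_v$ as claimed.

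Third, the orbits lie in $J_v$ and $J_d$ respectively and are matched by $\zeta_v$. Since $T_v$ is $\sigma$-invariant and $\pi_v(f^j_v(x_{s_i}))=\sigma^j(s_i)\in T_v$ for every $j$, we have $\Omega_i\subseteq\pi^{-1}_v(T_v)=J_v$; the orbits $\Omega_1,\ldots,\Omega_n$ are pairwise distinct because the symbols $s_i$ are. By the very construction of $J_d$, for each periodic $s_i\in T_v$ the set $\xi^{-1}_d(s_i)\cap J_d=\{y_{s_i}\}$ is a periodic point of minimal period $k_i$ for $p_d$, producing the orbits $P_i\subseteq J_d$ of the claimed periods, again pairwise distinct since $\xi_d$ is injective on $J_d$. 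Finally, since $\zeta_v=\xi^{-1}_d\circ\pi_v$, we get $\zeta_v(\Omega_i)=\xi^{-1}_d(\{\sigma^j(s_i):0\le j<k_i\})\cap J_d=P_i$.

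The main obstacle is essentially bookkeeping: one must confirm that $\Omega_i$ sits inside $J_v$ rather than merely inside $I_v$, but this is automatic from the $\sigma$-invariance of $T_v$ together with $\pi_v\circ f_v=\sigma\circ\pi_v$. All the real work has already been done in Th.\ref{th41} (persistence of periodic orbits under perturbation of $p$) and Lemma \ref{uniqueness} (control of the parameter map $\Pi$), and the present Lemma is just their simultaneous deployment over the finite list $s_1,\ldots,s_n$.
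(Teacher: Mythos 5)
Your proposal is correct and follows essentially the same route as the paper: Th.\ref{th41} (via Cor.\ref{corjf}) supplies the periodic orbits $\Omega_i$ and membership $s_i\in S_v$, while Lemma \ref{uniqueness} together with Th.\ref{maximal} forces $s_i\in It(d)$, hence $s_i\in T_v$, and the rest follows from the construction of $J_d$ and $\zeta_v$. Your explicit bookkeeping (taking the minimum of the $\epsilon_i$, checking $\Omega_i\subseteq J_v$ via the $\sigma$-invariance of $T_v$, and arranging $d<c^*$ strictly) only makes precise what the paper leaves implicit.
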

\begin{proof}

Let $s_1,...,s_n$ and $k_1,...,k_n$ be as above. To begin, recall that by Th.\ref{th41} and Cor.\ref{corjf}, whenever $v$ is sufficiently close to $p$ we have $s_1,..,s_n\in S_v$ - and consequentially, for every $1\leq i\leq n$, $\pi^{-1}_v(s_i)$ includes a periodic point $x_i$ for $f_v$ of minimal period $k_i$, $1\leq i\leq n$. To continue, set $\Omega_i$, $1\leq i\leq n$ as the orbit of $x_i$ - by Th.\ref{th41}, provided $v$ is sufficiently close to $p$, $f_v$ is continuous at $\Omega_i$. Now, set $d=\Pi(v)$ and recall that when $v\to p$ we have $\Pi(F)\to-2$ (see Lemma \ref{uniqueness}) - therefore, provided $v$ is sufficiently close to $p$, by Lemma \ref{uniqueness} and Th.\ref{maximal} we have $s_1,...,s_n\in It(d)$.\\

This implies that whenever $v$ and $p$ are sufficiently close, by $T_v=\xi_d(I_d)\cap \pi_v(I_v)$ we have $s_1,...,s_n\in T_v$. As such, by the definition of $J_d$ above, this implies there exist periodic orbits $P_1,...,P_n\subseteq J_d$, periodic orbits for $p_d$ of minimal periods $k_1,...,k_n$, s.t. $\xi_d(P_i)=s_i$ (for $1\leq i\leq n$). Therefore, all in all, by the definition of the factor map $\zeta_v$ we conclude $\zeta_v(\Omega_i)=P_i$, $1\leq i\leq n$ and Lemma \ref{conti2} now follows.
\end{proof}
Having proven Lemma \ref{conti2}, we are almost done proving Th.\ref{polytheorem} - with the only part remaining is proving the increasing continuity $\zeta_v$ around periodic orbits in $J_v$ as $v\to p$. We we will do so using the notions of the (real) Julia and Fatou sets introduced at Section $1.2$. To this end, recall we denote by $\sigma:\{1,2\}^\mathbf{N}\to\{1,2\}^\mathbf{N}$ the one sided shift. We now prove:

\begin{lemma}
    \label{conti}
Let $p\in P$ be a trefoil parameter for the Rössler system, and choose some $n>3$. Then, provided $v\in P$ is sufficiently close to $p$ there exist at least $n-3$ distinct periodic orbits $\Omega_1,...,\Omega_{n-3}\subseteq J_v$ at which the factor map $\zeta_v$ is continuous.

\end{lemma}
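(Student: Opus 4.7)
The plan is to exploit the identity $\zeta_v = \xi_d^{-1}\circ\pi_v$ and separate the task into two independent pieces: continuity of $\pi_v$ at periodic orbits of $f_v$, which is already guaranteed by Lemma \ref{conti2} and Corollary \ref{corjf} for $v$ close to the trefoil parameter $p$, and single-valuedness (hence continuity) of $\xi_d^{-1}$ at the corresponding symbols on the polynomial side, which is exactly what Corollary \ref{single} provides for repelling periodic points of $p_d$ that do not touch the boundary of any Fatou component. The whole argument is thus a bookkeeping of how many of the $n$ periodic orbits produced by Lemma \ref{conti2} can fail these hypotheses on the polynomial side.

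The first step is to apply Lemma \ref{conti2} with the $n$ periodic symbols $s_1,\dots,s_n\in\{1,2\}^{\mathbf{N}}$ of respective minimal periods $k_1,\dots,k_n$, obtaining, for $v$ sufficiently close to $p$, the orbits $\Omega_1,\dots,\Omega_n\subseteq J_v$ and $P_1,\dots,P_n\subseteq J_d$ with $\zeta_v(\Omega_i)=P_i$, together with continuity of $f_v$ and $\pi_v$ at each $\Omega_i$. The second step is to prune $P_1,\dots,P_n$. By Theorem \ref{classification}, at most one $P_i$ can be attracting or weakly attracting; the remaining $n-1$ orbits are repelling and lie in the real Julia set $J(p_d)$. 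On the interval $V_d$ the Fatou set $F(p_d)$ is either empty or the basin of the unique non-repelling cycle, whose immediate basin is a finite union of open sub-intervals of $V_d$; standard Kneading-theoretic bookkeeping for the real quadratic family shows that the endpoints of those sub-intervals contain at most two distinct periodic orbits of $p_d$. Hence at most three of the $P_i$ fail the hypotheses of Corollary \ref{single}, and after relabelling $P_1,\dots,P_{n-3}$ are repelling periodic orbits in $J(p_d)$ disjoint from $\partial F(p_d)$.

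The third step is to upgrade single-valuedness to continuity of $\zeta_v$ at each $\Omega_i$ with $1\le i\le n-3$. Fix such an $i$, a point $x\in\Omega_i$, and a sequence $\{x_k\}\subseteq J_v$ with $x_k\to x$. Continuity of $\pi_v$ at $x$ gives $\pi_v(x_k)\to \pi_v(x)$, and $\zeta_v(x_k)\in J_d\subseteq V_d$, a bounded closed interval, so some subsequence $\zeta_v(x_{k_j})$ converges to a limit $z\in V_d$. The intertwining $\xi_d\circ\zeta_v=\pi_v$ together with continuity of $\xi_d$ forces $\xi_d(z)=\pi_v(x)=\xi_d(\zeta_v(x))$. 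Corollary \ref{single} applied to the repelling periodic point $\zeta_v(x)\in P_i$ gives $\xi_d^{-1}(\pi_v(x))=\{\zeta_v(x)\}$, so $z=\zeta_v(x)$. Since every subsequential limit coincides with $\zeta_v(x)$, the full sequence converges, proving continuity of $\zeta_v$ at $x$.

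The main obstacle I expect is the Fatou-boundary count in the second step: one must verify that at most a fixed finite number of the chosen periodic orbits can accidentally lie on $\partial F(p_d)$, which requires invoking the interval structure of real Fatou components of $p_d(x)=x^2+d$. A secondary technical point, to be absorbed in the same step, is ruling out the degenerate case in which the subsequential limit $z$ coincides with the critical point $0$ (where $\xi_d$ is only one-sidedly defined); this is excluded by observing that $\pi_v(x)$ is periodic while the itinerary of any neighbourhood of $0$ is dictated by the kneading invariant $K(d)$, which for $d$ close to $-2$ is strictly pre-periodic and thus distinct from $\pi_v(x)$. Once these one-dimensional technicalities are settled, the flow-side input is exactly what Lemma \ref{conti2} and Corollary \ref{corjf} supply, and Lemma \ref{conti} follows.
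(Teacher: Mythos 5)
Your proposal is correct and follows essentially the same route as the paper: decompose $\zeta_v=\xi_d^{-1}\circ\pi_v$, get continuity of $\pi_v$ at the $n$ orbits from Lemma \ref{conti2}, discard at most one non-repelling orbit via Theorem \ref{classification} and at most two more on $\partial F(p_d)$ (the paper does this bookkeeping via the No Wandering Domain Theorem), and apply Corollary \ref{single} to the surviving $n-3$ orbits. Your subsequence-compactness argument upgrading single-valuedness of $\xi_d^{-1}$ to continuity is just a more explicit version of the step the paper states tersely.
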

\begin{proof}

Let $s_1,...s_n\in\{1,2\}^\mathbf{N}$ be periodic symbols s.t. each $s_i$, $1\leq i\leq n$ lies on a different periodic orbit for the one-sided shift $\sigma$. Now, assume $v$ is sufficiently close to $p$ s.t. Lemma \ref{conti2} holds - that is, w.r.t. $d=\Pi(v)$, $\zeta_v=\xi^{-1}_d\circ\pi_v$ we have:
\begin{itemize}
    \item $s_1,...,s_n\in T_v$.
    \item  $J_v$ contains $n$ distinct periodic orbits for $f_v$, denoted by $\Omega_1,...,\Omega_n$ - by the argument presented in the proof of Lemma \ref{conti2}, we know $s_i\in\pi_v(\Omega_i)$, $i=1,...,n$.
    \item $J_d$ includes $n$ distinct periodic orbits for $p_d$, denoted by $P_1,...,P_n$, s.t. $\zeta_v(\Omega_i)=P_i$, $1\leq i\leq n$ - moreover, $P_i$ and $\Omega_i$ have the same minimal periods. Moreover, $s_i\in\xi_d(P_i)$.
\end{itemize}

By Th.\ref{th41}, provided $v$ is sufficiently close to $p$, the map $\pi_v:J_v\to T_v$ is continuous on points on the orbits $\Omega_1,...,\Omega_n$. Therefore, by $\zeta_v=\xi_d^{-1}\circ\pi_v$, it would suffice to show that whenever $v$ is sufficiently close to $p$, $\xi^{-1}_d:T_v\to J_d$ is continuous at $\xi_d(\cup_{i=1}^{n-3}P_i)$. To do so, recall that by Th.\ref{classification}, we already know that, without any loss of generality, $P_1,...,P_{n-1}$ are all repelling and lie in the real Julia set of $p_d$ (see Def.\ref{julia}). Now, consider the Fatou set of $p_d$, which lies in the invariant interval $V_d$ (see Def.\ref{julia}) - which we denote $G_d$. By Th.\ref{classification}, it is an open set which includes at most one periodic orbit, say, $O_1$ - therefore, $O_1$ lies at some finite collection of components of $G_d$, which we denote by $C_1$. Since $O_1\subseteq G_d$ and since $P_1,...,P_{n-1}$ are all at the Julia set, it is easy to see $O_1\ne P_i$, $1\leq i\leq n-1$ (see the illustration in Fig.\ref{conti1}).\\

Now, recall that by the No Wandering Domain Theorem (see \cite{Su} or Theorem VI.A in in \cite{DeMVS}), given any component $C_2\subseteq G_d$ s.t. $C_2\cap C_1=\emptyset$, there exists some $k$ s.t. $p^k_d(\overline{C})=\overline{C_1}$. Consequentially, there are no periodic orbits for $p_d$ which intersect $\partial C_2$. Similarly, it follows $\partial C_1$ may include at most two more periodic orbits for $p_d$, say $O_2$ and $O_3$, both of which lie on the real Julia set - in particular, we conclude $\overline{G_d}$ includes at most three periodic orbits. Therefore, it follows that without any loss of generality, for every $1\leq i\leq n-3$, $P_i$ lies away from $\overline{G_d}$ - and in particular, $P_i\cap\partial G_d=\emptyset$.\\

\begin{figure}[h]
\centering
\begin{overpic}[width=0.5\textwidth]{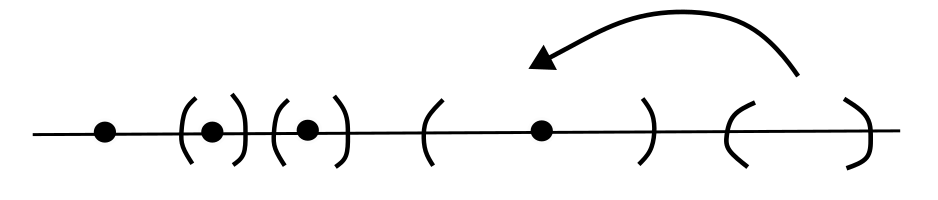}
\put(200,0){$x_{n+1}$}
\put(300,0){$x_n$}
\put(450,0){$C_1$}
\put(550,0){$O_1$}
\put(830,0){$C_2$}
\put(100,0){$x$}
\end{overpic}
\caption[Fig31]{The dynamics of $p_d$ in $V_d$ - $C_2$ is a component in $G_d$ which is eventually mapped on $C_1$ ($C_1$ includes a periodic point $O_1$ for $p_d$). $x=\xi^{-1}_d(s)$, while $x_n=\xi^{-1}_d(s_n)$, for some $\{s_n\}_n\subseteq T_F$, $s_n\to s$. In particular, $x_n$ lie in components of $I_d$ which become smaller and smaller as $s_n\to s$ - consequentially, since $\xi^{-1}_d(s)=\{x\}$, it follows $x_n\to x$. }
\label{conti1}
\end{figure}

To continue, recall $I_d$ denotes the maximal invariant set of $p_d$ in $V_d\setminus\{0\}$ (see the end of Section $1.2$ for more details). As a consequence from previous paragraph, we conclude that given $x\in\cup_{i=1}^{n-3}P_i$, $x$ is a point in the real Julia set which lies away from $\overline{G_d}$. Therefore, writing $\xi_d(x)=s$, by Cor.\ref{single} it follows $\xi^{-1}_d(s)$ is a singleton in $I_d$ - which implies that given any $s_n\to s$ in $T_v$, $\xi^{-1}_d(s_n)=x_n$ must tend to $x$, i.e., $\xi^{-1}_d$ is continuous at $s$ (see the illustration in Fig.\ref{conti1}). Since $x\in\cup_{i=1}^{n-3}P_i$ was chosen arbitrarily, it follows $\xi^{-1}_d:T_v\to J_d$ is continuous on $\xi_d(\cup_{i=1}^{n-3}P_i)$. Therefore, by previous discussion and by $\zeta_v=\xi^{-1}_d\circ\pi_v$ we conclude $\zeta_v$ is continuous at $\cup_{i=1}^{n-3}P_i$ and Lemma \ref{conti} now follows.
\end{proof}
Having proven Lemma \ref{conti}, we now conclude the proof of Th.\ref{polytheorem}. Summarizing our results, we have proven the existence of a function $\Pi:P\to[-2,\frac{1}{4}]$ s.t. given $v\in P$, $d=\Pi(v)$ the following is satisfied:

\begin{itemize}
    \item By Lemma \ref{uniqueness}, $\Pi$ is continuous at trefoil parameters.
    \item By Cor.\ref{corjf} and the discussion preceding Lemma \ref{conti2}, there exists an $f_v-$invariant $J_v\subseteq \overline{U_v}$, a bounded, $p_d-$invariant $J_d\subseteq\mathbf{R}$, and a function $\zeta_v:J_v\to J_d$ s.t. $\zeta_v\circ f_v=p_d\circ\zeta_v$.
    \item Given any $n>0$, by Lemmas \ref{conti} and Lemma \ref{conti2}, provided $v$ is sufficiently close to a trefoil parameter $p$, $J_v$ includes at least $n$ distinct periodic orbits $\Omega_1,...,\Omega_n$ for the first-return map $f_v$. Moreover, both $f_v$ and $\zeta_v$ are continuous at $\Omega_1,...,\Omega_n$.
    \item For every $1\leq i\leq n$, $P_i=\zeta_v(\Omega_i)$ is a periodic point for $p_d$ of the same minimal period as $\Omega_i$.
\end{itemize}

The proof of Th.\ref{polytheorem} is now complete.
\end{proof}
\begin{remark}
  Let us note Th.\ref{polytheorem} does not rule out the possibility $\Pi$ is in fact constant throughout the parameter space $P$ - however, in light of the rich bifurcation structure that was observed numerically for the Rössler system, this is very unlikely to be the case (see, for example, \cite{MBKPS}, \cite{BBS} and \cite{G}).
\end{remark}
\begin{remark}
Let $p$ be a trefoil parameter for the Rössler system, and assume $v=(a,b,c)$ is s.t. the vector field $F_v$ (see Eq.\ref{Field}) generates an attractor $A$.  In conjunction with Th.\ref{attrac}, Th.\ref{polytheorem} implies the following heuristic about the dynamics on the Rössler attractor - provided $v$ is sufficiently close to $p$, and provided the flow is sufficiently contracting around $A$, Th.\ref{polytheorem} states the dynamics on $A$ are essentially those of a suspended quadratic polynomial.
\end{remark}

\section{Discussion}
Before we conclude this paper, let us discuss how the Rössler system develops from order to chaos, and how both Th.\ref{attrac} and Th.\ref{polytheorem} relate to it. To motivate this discussion, let us remark that even though Th.\ref{polytheorem} gives a partial explanation why the "seemingly polynomial" behavior of the Rössler system is something to be expected, it does not provide us with clear topological mechanism explaining the appearance of such phenomena. We therefore conclude this paper by proposing such a mechanism, inspired by both Th.\ref{polytheorem}, the numerical studies (in particular, \cite{MBKPS} and \cite{Le}), and the Kneading Invariant introduced in Section $1.2$.\\

To begin, recall the component $\Delta_{Out}$ of the one-dimensional unstable manifold $W^u_{Out}$ introduced at the beginning of Section $2$. Additionally, recall that if $p\in P$ is a trefoil parameter for the Rössler system (see Def.\ref{def32}), when $v\to P$, $v\in P$, the invariant manifold $\Delta_{Out}$ is continuously deformed to the heteroclinic trajectory $\Theta$ (see Fig.\ref{fig7}). Now, let $v\in P$ be a parameter s.t. $\Delta_{Out}$ is bounded for $F_v$ -  by Cor.\ref{TR} it follows that for any such $v$, the separatrix $\Delta_{Out}$ either limits to a fixed point, or intersects with the cross-section $U_v$ transversely infinitely many times.\\

This motivates us to define a sequence $\{x_n\}_{n\geq0}$ as follows: let $p$ denote the first intersection point of the separatrix $\Delta_{Out}$ with $U_v$ (see Fig.\ref{deform5}), and define the sequence $\{x_n\}_{n\geq0}\subseteq (\Delta_{Out}\cap \overline{U_v})\cup\{P_{In},P_{Out}\}$ as follows - $x_0=P_{Out}$, $x_1=p$, and from then onward $x_{n+1}=f_v(x_n)$. In the particular case where $\Delta_{Out}$ is a heteroclinic trajectory which intersects with $\overline{U_v}$ a finite number of times, say, $\Delta_{Out}\cap\overline{U_v}=\{x_0,...,x_k\}$, we set $x_{k+j}=P_{In},j\geq1$ (see the illustration in Fig.\ref{deform5}).\\
\begin{figure}[h]
\centering
\begin{overpic}[width=0.3\textwidth]{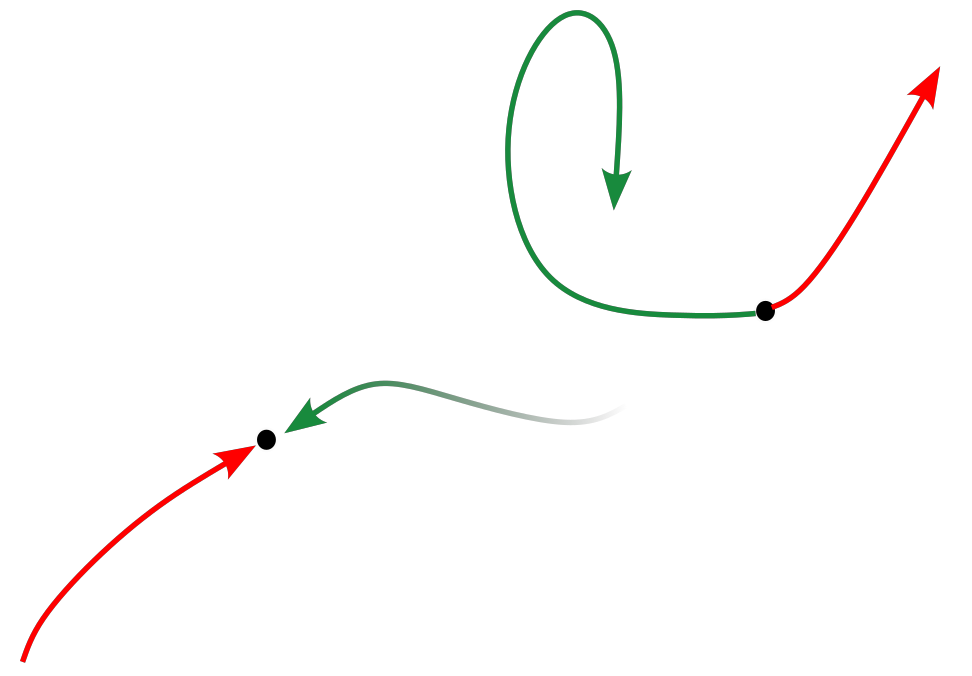}
\put(430,200){$\Delta_{In}$}
\put(300,190){$P_{In}$}
\put(50,220){$\Gamma_{In}$}
\put(750,520){$\Gamma_{Out}$}
\put(750,290){$P_{Out}$}
\put(560,450){$\Delta_{Out}$}
\end{overpic}
\caption[Fig16]{The separatrices $\Delta_{In},\Delta_{Out}$. The separatrices $\Gamma_{In},\Gamma_{Out}$ connect the fixed points to $\infty$ (see Th.\ref{th21}).}
\label{deform3}
\end{figure}

Now, recall we partitioned the cross-section $\overline{U_v}$ to two sets, $D_1,D_2$ - see the discussion preceding Th.\ref{th41} and the illustration in Fig.\ref{part}. Now, define the sequence $K(v)\in\{1,2\}^\mathbf{N}$, $K(v)=\{s_0,s_1,s_2,...\}$ by $s_n=1$ when $x_n\in D_1$ and $s_n=2$ otherwise. Similarly to the one-dimensional kneading invariant, $K(v)$ describes the symbolic dynamics of $\Delta_{Out}$ - i.e., it describes how $\Delta_{Out}$ intersects with the cross-section $U_v$, which constrains the possible flow dynamics in $\mathbf{R}^3\setminus (\Delta_{Out}\cup\{P_{In},P_{Out}\})$. Therefore, inspired by Th.\ref{maximal}, the proof of Th.\ref{attrac} and Th\ref{polytheorem}, we are led to the following conjecture:

\begin{conj}
\label{polyconj}Let $p\in P$ be a trefoil parameter for the Rössler system (see Eq.\ref{Field} and Def.\ref{def32}). Then, there exists a positive $\epsilon>0$ s.t. for every parameter $(a,b,c)=v\in \{w\in P|||w-p||<\epsilon\}$ the sequence $K(v)$ is well-defined and satisfies:

 \begin{itemize}
     \item If $K(v)=K(w)$, the vector fields $F_v,F_w$ define orbitally equivalent flows in $\mathbf{R}^3$ - that is, $K(v)$ completely determines the dynamics of $F_v$. 
     \item There exists a parameter $c\in[-2,\frac{1}{4}]$ s.t. $K(v)$ is the kneading invariant for $p_c(x)=x^2+c$ - that is, the correspondence $v\to K(v)$ defines a map from a neighborhood of $p$ in $P$ to $[-2,\frac{1}{4}]$. Moreover, the map $v\to K(v)$ is continuous and non-constant.
     \item Let $p_c(x)=x^2+c$ be a polynomial with a kneading invariant $K(v)$ and an invariant interval $V_c$ (see Section $1.2$) - then, there exists a continuous surjection $\zeta_v:\overline{U_v}\to V_c$ s.t. $\zeta_v\circ f_v=p_c\circ\zeta_v$.
     \item If $K(v)=\{1,2,1,1,1...\}$, then $v$ is a trefoil parameter - that is, the dynamics of the Rössler system at trefoil parameter are dynamically maximal.
     \item The function $v\to K(v)$ is continuous on structurally stable sets. Additionally, when $v\to p$, $K(v)\to\{1,2,1,1,1,...\}$
 \end{itemize}
\end{conj}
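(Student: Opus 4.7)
The plan is to prove the parts of Conjecture \ref{polyconj} in the order that leverages the machinery already established, then tackle the rigidity claim last. First I would verify that $K(v)$ is well-defined for $v$ sufficiently close to the trefoil parameter $p$. Recall that at $F_p$, the separatrix $\Delta_{Out}$ coincides with the bounded heteroclinic trajectory $\Theta$, which by Def.\ref{def32} intersects $\overline{U_p}$ transversely at a single point $P_0$. By continuous dependence of $W^u_{Out}$ on parameters and by Cor.\ref{TR}, for $v$ in a small neighborhood of $p$ the separatrix $\Delta_{Out}$ remains bounded (trapped in the absorbing region produced by Prop.\ref{attrac1}) and intersects $\overline{U_v}$ transversely. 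Since the curve $\rho_v$ separating $D_{1,v}, D_{2,v}$ varies continuously and generically avoids the forward orbit of $\Delta_{Out}\cap\overline{U_v}$, the coding $K(v)\in\{1,2\}^\mathbf{N}$ is well-defined and varies continuously on structurally stable subsets of $P$; the limit $K(v)\to\{1,2,1,1,\ldots\}$ as $v\to p$ then reduces to the fact that $\Theta$ hits $U_p$ once at $P_0$ and is asymptotic to $P_{In}$.

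Second, for the matching to a quadratic kneading invariant, I would invoke the Milnor--Thurston kneading theory of Ch.II.3 in \cite{DeMVS}: the admissible kneading sequences realized by the quadratic family $p_c(x)=x^2+c$, $c\in[-2,\frac{1}{4}]$, parametrize, in a monotone and surjective fashion, all kneading sequences of topologically mixing unimodal interval maps. Because the first-return map $f_v$, by the heuristic preceding Th.\ref{polytheorem} and by the template picture of Fig.\ref{template}, has the topological structure of a unimodal map whose turning value is precisely the first intersection $x_1$ of $\Delta_{Out}$ with $\overline{U_v}$, the symbol sequence of its critical orbit is exactly $K(v)$. Milnor--Thurston then selects a unique $c=c(v)\in[-2,\frac{1}{4}]$ whose kneading invariant is $K(v)$; continuity and non-constancy of $v\mapsto K(v)$ follow from the transverse dependence of $\Delta_{Out}$ on $v$ and from the fact that one may destroy the bounded heteroclinic $\Theta$ by an arbitrarily small perturbation, forcing $K$ to deviate from $\{1,2,1,1,\ldots\}$.

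Third, for the semiconjugacy $\zeta_v:\overline{U_v}\to V_c$, I would take the factor map on the invariant set $J_v$ produced by Th.\ref{polytheorem}, then extend it to the full first-return domain using the standard itinerary collapse: declare $x\sim y$ whenever they share the same $\rho_v$-itinerary for all forward times, and collapse each wandering domain of $\overline{U_v}\setminus J_v$ to a point mapped to the corresponding limit in $V_c$. Kneading-theoretic monotonicity (Th.II.3.2 of \cite{DeMVS}) guarantees that this extension is well-defined, continuous, surjective, and conjugates $f_v$ to $p_c$, and that the extension is unique up to the obvious freedom on attracting basins. The remaining clause, that $K(v)=\{1,2,1,1,1,\ldots\}$ implies $v$ is a trefoil parameter, falls out of the definition of $K$: the sequence forces $\Delta_{Out}\cap\overline{U_v}=\{x_1\}$ with $x_1\in D_1$ and the forward orbit of $x_1$ trapped in $D_1$ and asymptotic to $P_{In}$, i.e.\ a bounded heteroclinic from $P_{Out}$ to $P_{In}$; combining this with Th.\ref{attrac} (which, near $p$, keeps $\overline{W^s_{Out}}=\overline{W^u_{In}}$ bounding a topological ball) and with the transversality of $\Theta\cap\overline{U_v}$ recovers all three conditions of Def.\ref{def32}.

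The hard part will be the rigidity clause: that $K(v)=K(w)$ implies $F_v$ and $F_w$ are orbitally equivalent on $\mathbf{R}^3$. This is genuinely non-trivial and lies beyond pure Milnor--Thurston theory, because orbital equivalence in $\mathbf{R}^3$ is strictly stronger than topological conjugacy of the first-return map. My strategy would be to suspend the unimodal semiflow along the branched-surface template of Fig.\ref{template}, show that any two Rössler vector fields near $p$ with the same $K$ admit a semiconjugacy onto the \emph{same} template semiflow, and then upgrade conjugacy of these semiflows to orbital equivalence of $F_v$ and $F_w$ using Franks--Williams-type arguments for suspensions of expanding one-dimensional quotients. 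The main obstacle is that the Rössler flow is not globally hyperbolic: wandering dynamics, the attracting periodic orbits catalogued by Cor.\ref{classification}, and the behavior of $\Delta_{In}, \Delta_{Out}$ away from a neighborhood of the saddle foci all need to be controlled uniformly in $v$, and in all likelihood this step will require either an additional strong contraction hypothesis along $A$ or the restriction of the orbital equivalence statement to the non-wandering set, which is how I would expect a first proof of the conjecture to be formulated.
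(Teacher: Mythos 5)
The statement you are trying to prove is stated in the paper as Conjecture \ref{polyconj}; the paper offers no proof of it, and your proposal does not close it either --- it is a research program with several genuine gaps, not a proof. The central gap is the unimodal reduction: you assert that the first-return map $f_v$ ``has the topological structure of a unimodal map whose turning value is precisely the first intersection $x_1$ of $\Delta_{Out}$ with $\overline{U_v}$,'' but this is exactly the content of the conjecture, not something available from the paper. Theorem \ref{polytheorem} only produces a factor map on an invariant subset $J_v\subseteq I_v$ that is guaranteed continuous on finitely many periodic orbits; nothing in the paper yields a global continuous surjection from the two-dimensional domain $\overline{U_v}$ onto an interval semi-conjugating $f_v$ with a quadratic polynomial. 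Your proposed ``itinerary collapse'' extension would require showing that the quotient of $\overline{U_v}$ by shared $\rho_v$-itineraries is an interval and that the induced map is unimodal with kneading sequence $K(v)$; that is precisely where a strong contraction or invariant-foliation hypothesis is needed, and none is available (the paper itself only offers this as a heuristic, via Fig.\ref{crush} and Fig.\ref{template}). Without it, the Milnor--Thurston step has nothing to apply to, since kneading theory classifies unimodal interval maps, not planar return maps.

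Two further steps fail concretely. First, your argument that $K(v)=\{1,2,1,1,\ldots\}$ forces $v$ to be a trefoil parameter only recovers a bounded heteroclinic connection $\Delta_{Out}=\Theta$ meeting $U_v$ once; Definition \ref{def32} additionally requires the two-dimensional manifolds $W^u_{In}$ and $W^s_{Out}$ to coincide, a non-generic codimension condition that cannot be read off from the symbolic itinerary of a one-dimensional separatrix, and Theorem \ref{attrac} does not assert this coincidence near $p$ (it is an existence result for an attractor under a hypothesis on $W^s_{Out}\cap S_b$ that the paper does not verify at or near trefoil parameters --- indeed the paper explicitly says it does not know the set $\kappa$ of such parameters is non-empty, so you cannot use Prop.\ref{attrac1} to trap $\Delta_{Out}$ either). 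Second, well-definedness of $K(v)$ for \emph{every} $v$ in a neighborhood of $p$ does not follow from the orbit of $x_1$ ``generically'' avoiding $\rho_v$; the conjecture quantifies over all $v$ with $\|v-p\|<\epsilon$. Finally, you candidly leave the rigidity clause ($K(v)=K(w)$ implies orbital equivalence) unproven and propose weakening the statement; that is a reasonable assessment of its difficulty, but it confirms that what you have written is a plan of attack rather than a proof.
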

\begin{figure}[h]
\centering
\begin{overpic}[width=0.45\textwidth]{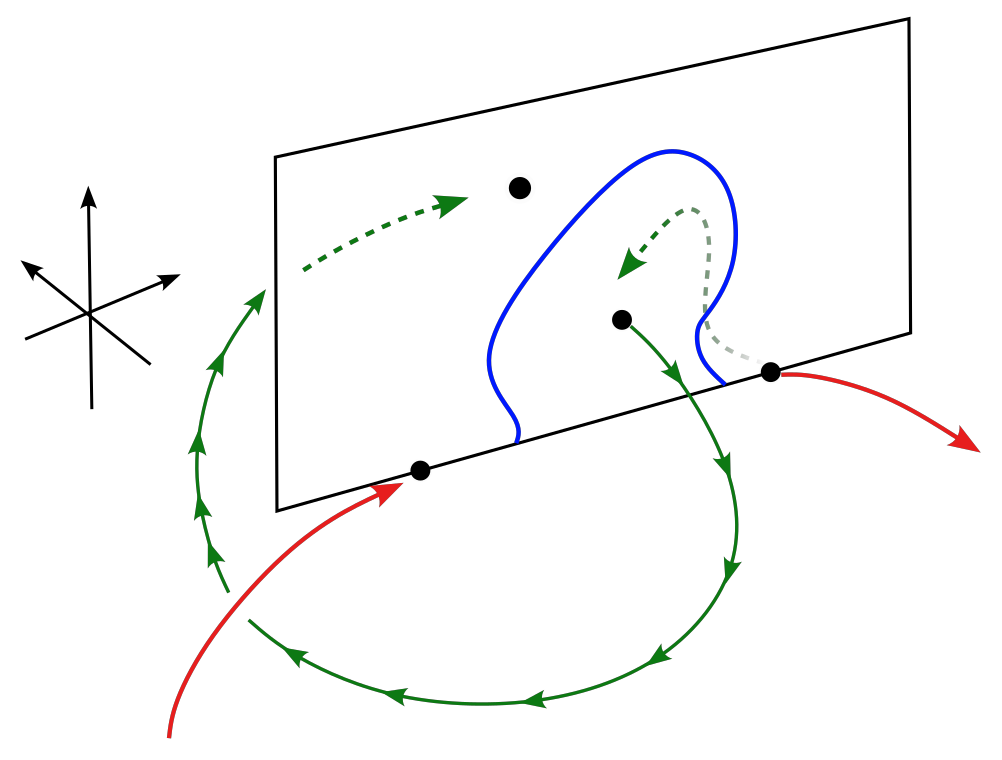}
\put(75,580){$z$}
\put(320,550){$D_1$}
\put(465,520){$x_2$}
\put(440,430){$\rho$}
\put(570,470){$x_1$}
\put(620,550){$D_2$}
\put(800,480){$U_p$}
\put(190,490){$x$}
\put(0,510){$y$}
\put(455,90){$\Delta_{Out}$}
\put(425,240){$P_{In}$}
\put(200,50){$\Gamma_{In}$}
\put(750,330){$P_{Out}$}
\put(910,360){$\Gamma_{Out}$}
\end{overpic}
\caption[Fig31]{The trajectory of $\Delta_{Out}$, connecting $x_n$ to $x_{n+1},n\geq0$ (with $x_0=P_{Out}$).}
\label{deform5}
\end{figure}

In other words, Conjecture \ref{polyconj} implies that at least around trefoil parameters, the dynamical complexity of the Rössler system is completely determined by the invariant manifold $\Delta_{Out}$. Moreover, it also implies $\Delta_{Out}$ constrains the possible flow dynamics similarly to how the orbit of the critical point constrains the dynamics of a quadratic polynomial $p_c$. In addition, it also implies that continuous families of one-dimensional maps which fold the space (like, say, the Quadratic family or the Tent family) serve as singular models for the dynamics of the Rössler attractor - and more generally, of other $C^1$ families of vector fields whose dynamics stretch and fold $\mathbf{R}^3$.\\

In this spirit, before concluding this paper let us remark there exists a generalization of the kneading invariant to two-dimensional diffeomorphisms - the Braid Type (see \cite{CH}). In \cite{CH}, the Braid Type was applied to study the dynamics of the Henon map (see \cite{He}). The Henon map, originally introduced in \cite{He} as a diffeomorphism which folds the plane, is known numerically to have a similar bifurcation diagram to that of the Rössler system - for example, one can compare the spiral structure observed for the Rössler system in \cite{G} with the bifurcation diagram of the Henon map in \cite{G1}. Of course, it is probably impossible to directly apply Braid Types to study the dynamics of the Rössler system - if only because the is no reason to assume the first-return map for the flow is even globally defined. However, due to the similarities cited above, it may well be possible the dynamics of the Rössler system and those of the Henon map are somehow related. Due to the well-known connection between the Henon map and the dynamics of quadratic polynomials, we conjecture further study of Conj.\ref{polyconj} could at least partially explain this connection.

%
%
%
\printbibliography
\end{document}